\numberwithin{equation}{section}
\theoremstyle{definition}
\newtheorem{defi}{Definition}[section]
\newtheorem*{defin}{Definition}
\newtheorem{trm}[defi]{Theorem}
\newtheorem{lem}[defi]{Lemma}
\newtheorem{cor}[defi]{Corollary}
\newtheorem{prop}[defi]{Proposition}
\newtheorem*{mtrm}{Main Theorem}
\theoremstyle{definition}
\newcommand{\p}[1]{\mleft(#1\mright)}
\newcommand{\cp}[1]{\mleft\{#1\mright\}}
\newcommand{\comp}[2]{{\normalfont(cf. \cite[#1]{#2})}}
\newcommand{\comptwo}[4]{{\normalfont(cf. \cite[#1]{#2}, \cite[#3]{#4})}}
\newcommand{\inN}{\in\mathbb{N}}
\newcommand{\seq}[2]{\p{#1_{#2}}_{#2\inN}}
\newcommand{\conv}[2]{#1\xrightarrow{\ \, \ }#2}
\begin{document}
\begin{center}
\textbf{\LARGE{Gromov-Hausdorff Limits\\[0.2cm]of Closed Surfaces}}\\[0.5cm] 
\scriptsize{TOBIAS DOTT}
\end{center}
\begin{abstract}
We completely describe the Gromov-Hausdorff closure of the class of length spaces being homeomorphic to a fixed closed surface.
\end{abstract}
\let\thefootnote\relax\footnotetext{\emph{Date}: September 10, 2023.}
\let\thefootnote\relax\footnotetext{2020 \emph{Mathematics Subject Classification}. Primary 51F99; Secondary 53C20, 54F15.}
\let\thefootnote\relax\footnotetext{The author was supported by the DFG grant SPP 2026 (LY 95/3-2).}
\section{Introduction}
Let $M$ be a closed connected smooth manifold of dimension larger than two. Then a theorem by Ferry and Okun implies that every simply connected compact ANR carrying a length metric can be obtained as the Gromov-Hausdorff limit of length spaces being homeomorphic to $M$ \comp{p. 1866}{FO95}. In dimension two the statement does not apply. For example a space being homeomorphic to the 3-disc can not be obtained as the limit of length spaces being homeomorphic to the 2-sphere \comp{p. 269}{BBI01}. This observation leads to the following question: Is there a 2-dimensional version of the statement?\\ 
In the present paper we completely describe the Gromov-Hausdorff closure of the class of length spaces being homeomorphic to a fixed closed surface. As a corollary we derive an answer to our question.\\
We will see that the spaces of the closure have the following topological properties:
\begin{trm}\label{trm_topo} 
Let $X$ be the Gromov-Hausdorff limit of a convergent sequence of length spaces being homeomorphic to a fixed closed surface. Then the following statements apply:
\begin{itemize}
\item[1)] $X$ is at most 2-dimensional.
\item[2)] $X$ is locally simply connected.
\item[3)] There are finitely many closed surfaces $S_1,\ldots,S_n$ and $k\inN$ such that $\pi_1(X)$ is isomorphic to $\pi_1\p{S_1}\ast\ldots\ast\pi_1\p{S_n}\ast\underbrace{\mathbb{Z}\ast\ldots\ast\mathbb{Z}}_{k\text{-times}}$.
\end{itemize}
\end{trm}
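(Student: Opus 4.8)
The plan is to understand how a sequence of length metrics on the fixed surface $\Sigma$ can degenerate, and to read the three properties off from that picture. Write $\seq{X}{n}$ for the approximating surfaces with $\conv{X_n}{X}$; since Gromov--Hausdorff limits of compact length spaces are compact length spaces, $X$ is a compact geodesic space, and I fix $\epsilon_n$-isometries $f_n\colon X_n\to X$ with $\epsilon_n\to 0$. The single structural fact driving everything is that the topological complexity of $\Sigma$ is bounded once and for all: the rank of $H_1(\Sigma)$, and the maximal number of pairwise disjoint, pairwise non-homotopic essential simple closed curves it carries, depend only on its genus. This ``bounded complexity at every scale'' is what rules out the dimension jumps and Hawaiian-earring phenomena available to general length-space limits, and I expect the conversion of this global bound into scale-by-scale control on $X$ to be the main obstacle of the argument.

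I would prove statement 2 first, by contradiction. If $X$ failed to be locally simply connected at a point $p$, there would be loops $\gamma_k$ supported in neighborhoods of $p$ of diameter tending to $0$, none null-homotopic in some fixed neighborhood $U$ of $p$. Transporting $\gamma_k$ back through $f_n$ for suitable $n$ yields, on the surface $X_n$, loops that are arbitrarily short yet remain essential within a region of definite size. The technical core is to replace such loops by genuinely incompressible simple closed curves on $X_n$ and to show they may be taken pairwise disjoint and pairwise non-homotopic; their number would then be unbounded, contradicting the genus bound. The delicate points are controlling which homotopies survive an $\epsilon_n$-isometry and ensuring that ``small but essential in $U$'' produces honestly independent curves rather than repetitions of a single class.

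Statement 1 then follows from the same local analysis. Having controlled small loops, I would show that every point of $X$ possesses arbitrarily small neighborhoods whose metric spheres are at most $1$-dimensional: on the approximating surfaces the corresponding level sets of the distance function are unions of boundedly many curves, and the bounded-complexity principle prevents their limits from filling out a region. The inductive characterization of covering dimension---a space all of whose points admit arbitrarily small neighborhoods with $\le(n-1)$-dimensional boundary has dimension $\le n$---then gives $\dim X\le 2$.

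For statement 3 I would assemble the local pictures into a global decomposition. Once $X$ is known to be at most $2$-dimensional and locally simply connected, its maximal $2$-dimensional components are each (by their surface origin) homeomorphic to a closed surface $S_i$ with finitely many points possibly identified, and these are joined to one another and to a finite graph along finitely many points. This mirrors a curve decomposition of $\Sigma$ in which some curves have collapsed to points: collapsing a separating curve splits off a closed-surface summand contributing $\pi_1(S_i)$, while collapsing a non-separating curve identifies two points of a surface and contributes a free factor $\mathbb{Z}$. A Seifert--van Kampen computation over this finite, wedge-like decomposition---with all gluings along simply connected (indeed $0$-dimensional) sets, so that no amalgamation occurs---then yields $\pi_1(X)\cong\pi_1(S_1)\ast\cdots\ast\pi_1(S_n)\ast\mathbb{Z}^{\ast k}$, with $n$ and $k$ bounded in terms of the genus of $\Sigma$.
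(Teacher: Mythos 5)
Your overall picture of the limit (closed surfaces and trees glued along points, with point identifications contributing $\mathbb{Z}$-factors) matches the structure the paper ultimately establishes, but the two steps you yourself flag as the technical core do not go through as described, and they are where all the work lies. First, for local simple connectivity: a loop $\gamma_k$ that is essential in $U\subset X$ cannot simply be ``transported back through $f_n$'' so as to remain essential in $X_n$. An $\epsilon_n$-isometry is not continuous; to compare homotopy classes across it you must fill in the small gaps created by discretization, and filling them on the $X$-side requires precisely the local simple connectivity you are trying to prove, while pushing a null-homotopy forward from $X_n$ runs into the same circularity. This is the content of the Sormani--Wei stability theorem the paper invokes, and it requires the sequence to be \emph{uniformly} semi-locally simply connected --- exactly the hypothesis that fails when a handle collapses. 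Second, even granting short essential loops on $X_n$ near $x_n$, they need not yield unboundedly many pairwise disjoint, pairwise non-homotopic simple closed curves: a single thin handle on a genus-one surface already produces arbitrarily short essential curves, all parallel to one another, with no contradiction to the genus bound (and the limit, a sphere with two points identified, is nevertheless locally simply connected). So the counting argument cannot detect the failure you posit. The dimension argument via metric spheres is similarly unsubstantiated; ``the bounded-complexity principle prevents their limits from filling out a region'' is the assertion to be proved, not a proof.

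The paper goes in the opposite direction: it operates on the surfaces $X_n$ rather than on $X$. When the sequence is uniformly semi-locally simply connected it collapses complements of disc-like regions to produce 2-sphere quotients and reduces to Whyburn's theorem that limits of length 2-spheres are cactoids; otherwise it finds short non-contractible Jordan curves $J_n\subset X_n$, passes to the quotients $X_n/J_n$ (a wedge of two lower-complexity surfaces, a 2-point identification of a lower-complexity surface, or a lower-complexity surface), and inducts on the connectivity number. Local simple connectivity, the dimension bound (via Whyburn's cyclic element theory applied to maximal cyclic subsets, not via metric spheres), and the free-product formula for $\pi_1$ (via approximation by finite metric wedge sums plus Sormani--Wei, together with the HNN presentation accounting for the 2-point identifications) are then read off from the resulting explicit structure. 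To salvage your route you would need an independent argument for the transport and independence steps, and I do not see one that avoids essentially redoing this induction.
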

\noindent
Before we state our main result, we introduce some definitions: A subset $C$ of a Peano space $X$ is called \emph{cyclicly connected} if every pair of points in $C$ can be connected by a Jordan curve in $C$. The subset is denoted as \emph{maximal cyclic} provided it is not degenerate to a point, cyclicly connected and no proper subset of a cyclicly connected subset of $X$.
\begin{defin}
We say that a Peano space is a generalized cactoid if all its maximal cyclic subsets are closed surfaces and only finitely many of them are not homeomorphic to the 2-sphere.
\end{defin}
\noindent
In the case that all maximal cyclic subsets are homeomorphic to the 2-sphere, we just call it a \emph{cactoid}.\\
A space being isometric to a metric quotient of $X$ whose underlying equivalence relation identifies exactly two points is referred to as a \emph{metric 2-point-indentification} of $X$.\\
The \emph{connectivity number} of a closed surface is defined as its first Betti number with coefficients in $\mathbb{Z}_2$. Moreover we call a surface carrying a length metric a \emph{length surface}.\\
The following main result of our work completely describes the Gromov-Hausdorff closure of the class of closed length surfaces whose connectivity number is fixed:
\begin{mtrm} Let $c\inN_0$ and $X$ be a compact length space. Then the following statements are equivalent:
\begin{itemize}
\item[1)] $X$ can be obtained as the Gromov-Hausdorff limit of closed length surfaces whose connectivity number is equal to $c$. 
\item[2)] $X$ can be obtained by a successive application of $k$ metric 2-point identifications to a geodesic generalized cactoid such that the sum of the connectivity numbers of its maximal cyclic subsets is less or equal to $c-2k$.
\end{itemize}
\end{mtrm}
\noindent
This result was partly conjectured by Young \comptwo{p. 348}{You49}{p. 854}{RS38}. Further the equivalence of the statements remains true even if we restrict the first statement to Riemannian or polyhedral 2-manifolds \comp{p. 1674}{NR23}. We will also see how the result changes if we restrict the first statement to orientable or non-orientable surfaces.\\ 
In the 1930's Whyburn already proved that the limits of length spaces being homeomorphic to the 2-sphere are cactoids \comp{p. 419}{Why35}. Moreover there is a related result about closed Riemannian 2-manifolds with uniformly bounded total absolute curvature by Shioya \comp{p. 1767}{Shi99} and a sketch of a local description of the limit spaces by Gromov \comp{S. 102}{Gro07}.\\
Finally the following corollary answers the question from the beginning:
\begin{cor}\label{cor_ANR}
Let $X$ be a metric space and $S$ be a closed surface. Then the following statements are equivalent: 
\begin{itemize}
\item[1)] $X$ is a simply connected ANR that can be obtained as the Gromov-Hausdorff limit of length spaces being homeomorphic to $S$. 
\item[2)] $X$ is a geodesic cactoid having only finitely many maximal cyclic subsets.
\end{itemize}
\end{cor}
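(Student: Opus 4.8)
The plan is to deduce both implications from the Main Theorem together with Theorem~\ref{trm_topo}, after the preliminary observation that every length space homeomorphic to $S$ is a closed length surface whose connectivity number equals the connectivity number $c$ of $S$. Hence the class in statement~1) is contained in the class of closed length surfaces of connectivity number $c$, and every Gromov-Hausdorff limit occurring in~1) is simultaneously a limit in the sense of the Main Theorem; in particular it is a compact length space and therefore geodesic by Hopf-Rinow. This lets me apply the Main Theorem to $X$ directly.

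For the implication 1)~$\Rightarrow$~2) I would argue as follows. By the Main Theorem, $X$ arises from a geodesic generalized cactoid $Y$ through $k$ metric $2$-point identifications. Reading off $\pi_1(X)$ from Theorem~\ref{trm_topo}(3) presents it as a free product of surface groups with $k$ copies of $\mathbb{Z}$, where the $\mathbb{Z}$-factors are contributed by the identifications and the surface groups by the maximal cyclic subsets of $Y$. Since $X$ is simply connected and a free product is trivial only when it has no nontrivial factors, I conclude $k=0$, so $X=Y$, and every maximal cyclic subset has trivial fundamental group, i.e.\ is a $2$-sphere; thus $X$ is a geodesic cactoid. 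For finiteness I would use that $X$ is a compact ANR of dimension at most $2$ by Theorem~\ref{trm_topo}(1), hence homotopy equivalent to a finite CW complex and so possessing finitely generated homology. A Mayer-Vietoris computation along the cyclic-element structure identifies $H_2(X)$ with one copy of $\mathbb{Z}$ per spherical maximal cyclic subset, so finite generation forces finitely many maximal cyclic subsets.

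For the implication 2)~$\Rightarrow$~1), given a geodesic cactoid $X$ with finitely many maximal cyclic subsets, all $2$-spheres, I would apply the Main Theorem in the direction 2)~$\Rightarrow$~1) with $k=0$ and $Y=X$: each spherical maximal cyclic subset has connectivity number $0$, so the condition $0\le c$ holds and $X$ is a limit of closed length surfaces of connectivity number $c$. To promote ``connectivity number $c$'' to ``homeomorphic to the fixed $S$'' I would invoke the orientable/non-orientable refinement announced after the Main Theorem, choosing the approximating surfaces of the orientability type of $S$ by attaching small handles or crosscaps that collapse in the limit, so the spheres may be replaced by copies of $S$. It then remains to verify the topological assertions: simple connectivity is immediate, as $\pi_1$ of a finite cactoid of spheres is a free product of trivial groups; and the ANR property holds because a finite tree-like union of $2$-spheres glued at points is a $2$-dimensional, locally contractible compactum.

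The hard part will be the backward direction, precisely the passage from ``connectivity number $c$'' to the fixed homeomorphism type $S$: one must realize the sphere-cactoid as a genuine Gromov-Hausdorff limit of surfaces \emph{all} homeomorphic to $S$, with metrics degenerating so that the extra handles or crosscaps shrink to points while the limit metric on $X$ remains geodesic. This is exactly the content of the orientability refinement of the Main Theorem, and it is where the geometric construction, rather than the homotopy-theoretic bookkeeping of the forward direction, carries the argument.
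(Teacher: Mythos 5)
Your overall skeleton matches the paper's: the forward direction reads off simple connectivity from the free-product presentation of $\pi_1$ (the paper cites Proposition~\ref{prop_HNN} and Proposition~\ref{prop_fund} to get ``simply connected $\iff$ cactoid'' and $k=0$), and the backward direction rests entirely on the orientability refinement Theorem~\ref{trm_sec_Main}, which you correctly identify as the place where the real geometric work happens. The one point where you genuinely diverge is the finiteness of the set of maximal cyclic subsets. The paper gets both directions of ``ANR $\iff$ finitely many maximal cyclic subsets'' from Corollary~\ref{cor_gen_ANR}, whose proof is via local contractibility: infinitely many non-contractible maximal cyclic subsets accumulate at a point by Lemma~\ref{lem_topo}, and the retraction of Lemma~\ref{lem_topo}(7) shows no small neighborhood of that point can be contractible. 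You instead argue homologically: a compact finite-dimensional ANR has finitely generated $H_2$, and infinitely many spheres would violate this. That route works, but not as you state it: for infinitely many spheres the cyclic-element structure does not yield $H_2(X)\cong\bigoplus_n\mathbb{Z}$ by Mayer--Vietoris (compare the shrinking wedge of $2$-spheres, where $H_2$ is a direct product rather than a direct sum); the correct repair is to use the retractions $r\colon X\to T_n$ of Lemma~\ref{lem_topo}(7), which exhibit each $H_2(T_n)\cong\mathbb{Z}$ as a summand and show the classes $[T_n]$ are independent, so $H_2(X)$ is not finitely generated. Note also that you still need the converse (finitely many spheres $\Rightarrow$ ANR), which you assert as ``locally contractible compactum''; this is exactly the content of the paper's local contractibility proposition and is slightly less immediate than it looks, since a cactoid with finitely many maximal cyclic subsets may still carry infinitely many dendritic pieces attached along an infinite set of cut points. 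The paper's route via Corollary~\ref{cor_gen_ANR} buys a uniform treatment of both directions; yours buys a quick obstruction in the ANR-to-finiteness direction at the cost of some care with infinite wedges.
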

\noindent
This work is organized as follows: We start with some preliminary notes on Gromov-Hausdorff convergence and the topology of Peano spaces, closed surfaces and gluings.\\
The third section is devoted to the topological connection between maximal cyclic subsets and their ambient space. In particular we derive a fundamental group formula for locally simply connected Peano spaces in terms of their maximal cyclic subsets. The section also provides first topological properties of generalized cactoids.\\
In Section \ref{sec} we show that the first statement of the Main Theorem implies the second. For this we begin with a consideration of sequences with additional topological control. At the end of the section we give a proof of Theorem \ref{trm_topo}.\\
The aim of the last section is to show the remaining direction of the \hyperlink{Main Theorem}{Main Theorem}. After this we prove Corollary \ref{cor_ANR}.\\
The final results of Section \ref{sec} and \ref{sec_2} refine their corresponding statement of the \hyperlink{Main Theorem}{Main Theorem}. Together they completely describe the Gromov-Hausdorff closure of the class of length spaces being homeomorphic to a fixed closed surface.  
\section{Preliminaries}
\subsection{Gromov-Hausdorff Convergence}
In this subsection we present results about Gromov-Hausdorff convergence. For basic definitions and results concerning the Gromov-Hausdorff distance we refer the reader to \cite[pp. 251-270]{BBI01}. Moreover there is a corresponding notion of convergence for maps which is treated in \cite[pp. 401-402]{Pet16}.\\
For the sake of simplicity we note the following: If we consider a Gromov-Hausdorff convergent sequence, then there are isometric embeddings of the spaces and their limit into some compact metric space such that the induced sequence Hausdorff converges to the image of the limit \comp{p. 506}{Kaw94}. Whenever we apply this statement, we will identify corresponding sets without mentioning the underlying space.\\
We introduce the concept of almost isometries: If $f\colon X\to Y$ is a map between metric spaces, we define its \emph{distortion} by:
\begin{align*}
dis(f)\coloneqq\sup_{x_1,x_2\in X}\cp{\left|d_Y\p{f\p{x_1},f\p{x_2}}-d_X\p{x_1,x_2}\right|}. \end{align*}
The map $f$ is called an \emph{$\varepsilon$-isometry} provided $dis(f)\le\varepsilon$ and $f(X)$ is an $\varepsilon$-net in $Y$.\\
The next two results are tools to prove convergence:
\begin{prop}\comp{p. 260}{BBI01} 
Let $X$ be a compact metric space and $\seq{X}{n}$ be a sequence of compact metric spaces. Then the following statements are equivalent:
\begin{itemize}
\item[1)] The sequence converges to $X$.
\item[2)] For every $n\inN$ there is an $\varepsilon_n$-isometry $f_n\colon X_n\to X$ and $\conv{\varepsilon_n}{0}$.
\end{itemize}
Moreover the equivalence remains true if we interchange $X_n$ and $X$ in the second statement.
\end{prop}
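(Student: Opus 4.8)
The plan is to prove the standard equivalence between Gromov-Hausdorff convergence and the existence of $\varepsilon_n$-isometries with $\varepsilon_n\to 0$.

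The plan is to prove the equivalence by translating between the Gromov-Hausdorff distance $d_{GH}\p{X_n,X}$ and the distortion of almost-isometries, using the two standard characterizations of $d_{GH}$ recorded in \cite{BBI01}: the realization of $d_{GH}$ as the infimum, over isometric embeddings of two spaces into a common metric space, of the Hausdorff distance $d_H$ between their images, and the description $d_{GH}\p{X,Y}=\tfrac12\inf_R dis(R)$ as an infimum over all correspondences $R$. Every estimate below loses only a universal constant factor, so the condition $\conv{\varepsilon_n}{0}$ passes through unchanged.

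For the implication $1)\Rightarrow 2)$ I would fix $n$, set $\delta_n\coloneqq d_{GH}\p{X_n,X}$, which tends to $0$ by assumption, and use the embedding characterization to realize $X_n$ and $X$ isometrically inside a common compact space $Z$ with $d_H\p{X_n,X}<\delta_n+\tfrac1n$. For each $x\in X_n$ I pick $f_n(x)\in X$ with $d_Z\p{x,f_n(x)}<\delta_n+\tfrac1n$; such a point exists by the Hausdorff bound. Two applications of the triangle inequality then show $dis(f_n)\le 2\p{\delta_n+\tfrac1n}$, and a third shows that $f_n\p{X_n}$ is a $2\p{\delta_n+\tfrac1n}$-net, so $f_n$ is an $\varepsilon_n$-isometry for $\varepsilon_n\coloneqq 2\p{\delta_n+\tfrac1n}\to 0$.

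For $2)\Rightarrow 1)$ I would pass from an $\varepsilon_n$-isometry $f_n\colon X_n\to X$ to the correspondence $R_n\coloneqq\cp{(x,y): d_X\p{f_n(x),y}\le\varepsilon_n}$. This $R_n$ is genuinely a correspondence: every $x$ is paired with $f_n(x)$, and the $\varepsilon_n$-net property produces, for each $y\in X$, some $x$ with $(x,y)\in R_n$. Estimating $\left|d_X(y_1,y_2)-d_{X_n}(x_1,x_2)\right|$ by inserting $d_X\p{f_n(x_1),f_n(x_2)}$, bounding one part by $dis(f_n)\le\varepsilon_n$ and the other by the two defining inequalities of $R_n$, gives $dis(R_n)\le 3\varepsilon_n$, whence $d_{GH}\p{X_n,X}\le\tfrac32\varepsilon_n\to 0$, i.e.\ the desired convergence. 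The concluding assertion follows because the construction in $1)\Rightarrow 2)$ is symmetric in the two spaces — choosing nearby points in $X_n$ for each $x\in X$ instead yields an $\varepsilon_n$-isometry $X\to X_n$ from the very same embedding — while the correspondence argument of $2)\Rightarrow 1)$ applies verbatim to such a map.

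I expect no serious obstacle: the entire content is the bookkeeping of the constants $2$ and $3$ relating $d_{GH}$ to distortion. The one point deserving genuine care is verifying that $R_n$ is a correspondence, that is, that both projections are surjective; this is precisely where the $\varepsilon_n$-net hypothesis (as opposed to the distortion bound) is indispensable, and it is also the step that makes the \emph{moreover} clause work symmetrically.
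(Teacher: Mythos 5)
The paper does not prove this proposition --- it is quoted as a known result from \cite[p.~260]{BBI01} --- and your argument is precisely the standard one given there (realizing $d_{GH}$ via embeddings into a common space for $1)\Rightarrow 2)$, and via the distortion of the correspondence $R_n$ for $2)\Rightarrow 1)$), with the constants and the surjectivity of both projections of $R_n$ handled correctly. The symmetry observation disposing of the \emph{moreover} clause is also right, so there is nothing to add.
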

\begin{prop}\comp{p. 264}{BBI01}
Let $\seq{X}{n}$ be a sequence of compact metric spaces. Then there is a convergent subsequence if the following statements apply:
\begin{itemize}
\item[1)] There is some $D\in\mathbb{R}$ such that $diam\p{X_n}\le D$ for every $n\inN$.
\item[2)] There is a map $N\colon\interval[open]{0}{\infty}\to\mathbb{R}$ which satisfies the following property: For every $\varepsilon>0$ and $n\inN$ there is an $\varepsilon$-net in $X_n$ with at most $N(\varepsilon)$ points. 
\end{itemize}
\end{prop}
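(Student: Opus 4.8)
The plan is to construct the limit space directly by a diagonalization and then verify convergence via the $\varepsilon$-isometry characterization of the preceding proposition. First I would fix the scales $\varepsilon_k=1/k$ and, for every $n\inN$ and every $k$, choose an $\varepsilon_k$-net in $X_n$ of cardinality at most $N(\varepsilon_k)$. Taking successive unions of these nets yields, for each $n$, a countable dense sequence $\p{p_n^i}_{i\inN}$ of points of $X_n$ with the crucial feature that some initial segment $\cp{p_n^1,\ldots,p_n^{M_k}}$ is an $\varepsilon_k$-net, where the index $M_k$ depends only on $k$ (through the finitely many bounds $N(\varepsilon_1),\ldots,N(\varepsilon_k)$) and not on $n$.

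Next I would run the diagonal argument. By the diameter bound all pairwise distances $d_{X_n}\p{p_n^i,p_n^j}$ lie in the compact interval $[0,D]$, so by repeatedly passing to subsequences and diagonalizing I can extract a subsequence along which $d_{X_n}\p{p_n^i,p_n^j}$ converges for every fixed pair $(i,j)$ to a value $d_\infty(i,j)$. Symmetry, the triangle inequality, and vanishing on the diagonal survive the passage to the limit, so $d_\infty$ is a pseudometric on the index set; identifying points at $d_\infty$-distance zero and passing to the completion produces a metric space $X$. Since the classes of $1,\ldots,M_k$ form an $\varepsilon_k$-net in $X$ and $X$ is complete, $X$ is totally bounded and hence compact.

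Finally I would establish $\conv{X_n}{X}$ by exhibiting $\varepsilon_n$-isometries with $\conv{\varepsilon_n}{0}$ and invoking the previous proposition. For a prescribed tolerance choose $k$ with $\varepsilon_k$ small; for all sufficiently large $n$ in the chosen subsequence the finitely many distances among $p_n^1,\ldots,p_n^{M_k}$ are within $\varepsilon_k$ of their limits. Mapping each net point $p_n^i$ to its class in $X$ and every remaining point of $X_n$ to the image of a nearest net point yields a map whose distortion is controlled by $\varepsilon_k$ together with the finitely many distance errors and whose image is $\varepsilon_k$-dense; combining these estimates produces the desired $\varepsilon_n$-isometry.

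The main obstacle will be this last step: passing from the convergence of the finitely many net distances to a genuine $\varepsilon_n$-isometry of the entire spaces. This requires simultaneously balancing the net scale $\varepsilon_k$ against the distance-approximation error---so that the interplay of the two indices $k$ and $n$ is organized into a single sequence $\varepsilon_n\to 0$---and checking that extending the map off the net by nearest-point assignment inflates the distortion only by a controlled multiple of $\varepsilon_k$.
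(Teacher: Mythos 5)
Your argument is correct and is essentially the standard proof of Gromov's precompactness criterion given in the cited reference [BBI01, p.~264], which the paper itself does not reprove but simply quotes: uniform-cardinality nets, a diagonal extraction making all pairwise net distances converge, passage to the completed quotient of the limit pseudometric, and verification of convergence via $\varepsilon$-isometries as in the preceding proposition. The technical points you flag (padding the nets into a single sequence with $n$-independent indices $M_k$, and the bookkeeping that turns the two indices $k$ and $n$ into one sequence $\varepsilon_n\to 0$) are exactly the ones handled in that reference, and your outline resolves them correctly.
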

\noindent
Now we consider sequences with topological control: We say that a sequence $\seq{X}{n}$ of metric spaces is \emph{uniformly semi-locally simply connected} if $X_n$ does not contain non-contractible loops of diameter less than $2\varepsilon$ for every $n\inN$.\\
In general the fundamental group is not stable under Gromov-Hausdorff convergence. The upcoming result presents a case in which it is stable:  
\begin{trm}\comp{p. 3588}{SW01}
Let $X$ be a semi-locally simply connected compact metric space and $\seq{X}{n}$ be a uniformly semi-locally simply connected  sequence of compact length spaces. If $\conv{X_n}{X}$, then $\pi_1(X)$ is isomorphic to $\pi_1\p{X_n}$ for almost all $n\inN$.  
\end{trm}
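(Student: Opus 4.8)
The plan is to build, for all sufficiently large $n$, explicit mutually inverse homomorphisms between $\pi_1\p{X_n}$ and $\pi_1(X)$ by discretizing loops and homotopies at a single scale small enough that both the uniform semi-local simple connectivity of the sequence and the semi-local simple connectivity of $X$ can be invoked. First I would fix this scale. By hypothesis there is $\varepsilon>0$ such that no $X_n$ carries a non-contractible loop of diameter below $2\varepsilon$. Since $X$ is compact and semi-locally simply connected, a Lebesgue-number argument produces a radius $r>0$ such that every loop of diameter less than $2r$ in $X$ is null-homotopic in $X$. Set $\rho\coloneqq\min\cp{\varepsilon,r}$. Using the embedding into a common compact space in which $\conv{X_n}{X}$ becomes Hausdorff convergence, I would restrict to indices $n$ whose Hausdorff distance to $X$ is far below $\rho$, and fix the $\varepsilon_n$-isometries $f_n\colon X_n\to X$ supplied by the first convergence tool above, with $\conv{\varepsilon_n}{0}$.

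To define $\p{f_n}_*\colon\pi_1\p{X_n}\to\pi_1(X)$, I would represent a based loop in $X_n$ by a chain of points spaced less than some $\delta$ apart, push the chain forward via $f_n$ (equivalently, by nearest-point projection in the common space), and close up consecutive images by short geodesics in $X$—which is a length space, being a Gromov-Hausdorff limit of length spaces—to obtain a genuine loop. A homomorphism $\p{g_n}_*\colon\pi_1(X)\to\pi_1\p{X_n}$ is built symmetrically, projecting chains from $X$ back into $X_n$ and connecting by short paths there.

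The crux, and the step I expect to cost the most work, is well-definedness on homotopy classes. Given a based null-homotopy of a loop in $X_n$, I would subdivide the homotopy square finely enough that every elementary cell has image of diameter below $\rho$; the projected boundary of each cell is then a loop of diameter below $2\rho\le 2r$ in $X$, hence null-homotopic there, so the whole projected loop bounds and $\p{f_n}_*$ descends to homotopy classes. Running the identical subdivision with the roles of $X_n$ and $X$ exchanged, and using that loops of diameter below $2\varepsilon$ in $X_n$ are contractible, shows $\p{g_n}_*$ is well defined; functoriality under concatenation is immediate from the construction.

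Finally, composing the two maps displaces a loop only by a distance comparable to $\varepsilon_n+\delta$, so for $n$ large both composites send each loop to a freely homotopic one, the homotopy being swept out by loops of diameter below $\rho$. Tracking a short path between the two basepoints upgrades this to a based homotopy, yielding $\p{g_n}_*\circ\p{f_n}_*=\mathrm{id}$ and $\p{f_n}_*\circ\p{g_n}_*=\mathrm{id}$. Hence $\pi_1\p{X_n}\cong\pi_1(X)$ for almost all $n\inN$. The delicate points to monitor throughout are the interplay of the three constants $\varepsilon_n$, $\delta$, and $\rho$, and the consistent choice of basepoints and connecting paths so that all homotopies are genuinely based.
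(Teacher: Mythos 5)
The paper offers no proof of this theorem at all: it is imported as a black box from Sormani and Wei \cite{SW01}, so there is no internal argument to measure yours against. Judged on its own terms, your discretization proof is the standard self-contained argument for statements of this type, and I see no essential gap. The scale $\rho=\min\cp{\varepsilon,r}$ is correctly extracted (the Lebesgue-number step uses compactness of $X$ together with semi-local simple connectivity, and $X$ is indeed a length space as a Gromov--Hausdorff limit of compact length spaces, so the short connecting paths exist on both sides), and the well-definedness step via subdivision of the homotopy square is exactly where the two hypotheses enter -- contractibility of small loops in $X$ for $\p{f_n}_*$ and the uniform bound $2\varepsilon$ in $X_n$ for $\p{g_n}_*$ -- and you have placed them correctly. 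Two details you should write out in a full version: first, independence of the induced map from the choice of $\delta$-chain and of the connecting geodesics (handled by the same small-loop comparison, cell by cell, and needed even before you discuss homotopies); second, the basepoint bookkeeping at the end, since the composites only return a loop \emph{freely} homotopic to the original through small loops, and you must observe that the trace of the basepoint under that homotopy is itself a loop of diameter below $2\rho$, hence trivial, so the based classes agree rather than merely being conjugate. For the record, the published proof in \cite{SW01} packages the same mechanism differently, through $\delta$-covers and their stability under convergence rather than hand-built chain maps; your route is more elementary and proves the same statement.
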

\noindent
From \cite[p. 267]{BBI01} we deduce the following:
\begin{prop}
Every compact metric tree can be obtained as the limit of finite metric trees. 
\end{prop}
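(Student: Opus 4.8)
The plan is to apply the $\varepsilon$-isometry criterion from the first proposition above, by producing for each scale $\varepsilon>0$ a finite metric tree that sits isometrically inside the given tree as an $\varepsilon$-net. Let $T$ denote the compact metric tree. Since $T$ is compact, for every $\varepsilon>0$ I can select a finite $\varepsilon$-net $\cp{x_1,\ldots,x_N}\subseteq T$, and the goal is to upgrade this finite net to a finite metric \emph{subtree} that still approximates $T$ up to $\varepsilon$.

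First I would form the convex hull $T_\varepsilon$ of the net, that is, the union of the unique arcs joining the chosen points. In a metric tree the arc between two points is the unique geodesic and subtrees are convex, so geodesics between points of $T_\varepsilon$ stay inside $T_\varepsilon$. Consequently the metric that $T$ induces on $T_\varepsilon$ coincides with the intrinsic length metric of $T_\varepsilon$, and the inclusion $\iota\colon T_\varepsilon\to T$ has distortion zero. Moreover, since the net points themselves lie in $T_\varepsilon$, the subtree $T_\varepsilon$ is again an $\varepsilon$-net in $T$, so $\iota$ is an $\varepsilon$-isometry.

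The key structural step, which I expect to be the main obstacle, is to verify that $T_\varepsilon$ is genuinely a \emph{finite} metric tree. I would build the hull by adjoining the points $x_1,\ldots,x_N$ one at a time: each new point projects to a unique nearest point of the subtree already constructed, and the arc from the point to this projection attaches there, splitting at most one existing edge and contributing only finitely many new edges. After $N$ steps $T_\varepsilon$ is thus a finite tree whose finitely many edges carry the lengths inherited from $T$. Making this argument rigorous rests on the characteristic features of metric trees — unique arcs, convexity of subtrees, and the existence and uniqueness of nearest-point projections onto a subtree — which have to be invoked carefully to control the combinatorial structure and guarantee finiteness.

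Finally, choosing $\varepsilon_n=\nicefrac{1}{n}$ and writing $T_n\coloneqq T_{\varepsilon_n}$ yields a sequence of finite metric trees together with $\varepsilon_n$-isometries $\iota_n\colon T_n\to T$ satisfying $\conv{\varepsilon_n}{0}$. The cited proposition then gives $\conv{T_n}{T}$, which is exactly the claim.
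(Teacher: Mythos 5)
Your proof is correct, and it is more self-contained than what the paper does: the paper gives no argument of its own but simply deduces the proposition from the finite-graph approximation result cited as \cite[p.~267]{BBI01}, i.e.\ from the general fact that compact length spaces are Gromov--Hausdorff limits of finite metric graphs. Your route instead exploits the tree structure directly: the convex hull $T_\varepsilon$ of a finite $\varepsilon$-net is a finite subtree that is \emph{isometrically} embedded (since connected subsets of a metric tree are convex, the induced and intrinsic metrics on $T_\varepsilon$ agree), so the inclusion has distortion zero rather than merely small distortion, and the only real work is the finiteness of the hull, which your inductive nearest-point-projection argument handles correctly. What your approach buys is that one never has to worry about whether the approximating graphs can be arranged to be trees --- a point the citation-based derivation leaves implicit, since the generic net-and-edges construction of \cite{BBI01} does not obviously produce cycle-free graphs. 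What the paper's route buys is brevity and consistency with the surrounding use of \cite{BBI01}. The standard facts you invoke (unique arcs, convexity of connected subsets, existence and uniqueness of the projection onto a closed subtree) are all valid for compact metric trees in the sense used here, namely compact length spaces without Jordan curves, so there is no gap.
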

\noindent
We note that the class of compact metric trees is equal to the class of compact length spaces not containing Jordan curves \comp{pp. 367-368}{DK18}.\\
Finally we state \hypertarget{Whyburn's theorem}{Whyburn's theorem} about the limits of 2-spheres:
\begin{trm}\label{trm_why}\comp{p. 419}{Why35} The limits of length spaces being homeomorphic to the 2-sphere are cactoids.
\end{trm}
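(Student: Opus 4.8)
The plan is to verify the two defining features of a cactoid for the limit $X$: that it is a Peano space and that each of its maximal cyclic subsets is homeomorphic to $S^2$. First I would record that $X$ is a Peano continuum. A Gromov-Hausdorff limit of compact length spaces is again a compact length space (see \cite{BBI01}), and a compact length space is connected, locally connected and path connected, hence a Peano space. Consequently Whyburn's cyclic element theory applies to $X$, and the decomposition of $X$ into its maximal cyclic subsets together with its tree-like part is available. Since a cactoid is by definition a Peano space all of whose maximal cyclic subsets are $2$-spheres, the whole problem reduces to the single claim that every maximal cyclic subset $C\subseteq X$ is homeomorphic to $S^2$.

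For this reduction I would first pin down the role of the cut points of $X$. Using the identification of a convergent sequence inside a common compact space provided in the preliminaries, I regard the approximating spheres $S_n$ as subsets Hausdorff converging to $X$. A cut point $p$ of $X$ should be matched, for large $n$, with a Jordan curve $\gamma_n\subseteq S_n$ of small diameter lying near $p$; by the Jordan curve theorem such a $\gamma_n$ separates $S_n$ into two discs, and this separation mirrors the separation of $X$ at $p$. Cutting each $S_n$ along the curves that correspond to cut points of $X$ and capping the resulting boundary circles with discs turns every piece back into a $2$-sphere. The maximal cyclic subset $C$ is then exhibited as the Gromov-Hausdorff limit of the capped pieces, each of which is a length $2$-sphere, and by construction no further cut points arise inside $C$.

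It remains to recognise the limit. Here I would show that $C$ is a $2$-dimensional, locally simply connected Peano continuum without cut points in which every Jordan curve separates, and then invoke the classical topological characterisation of the $2$-sphere (of Zippin--Janiszewski type) to conclude $C\cong S^2$. The separation property is inherited from the spheres through the approximation, and local simple connectivity follows once the necks corresponding to cut points have been removed, so that small loops in $C$ can be filled by images of small discs coming from the $S_n$.

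The hard part will be the control of the limit's local topology in the complete absence of any metric or curvature bound. Two points are delicate. First, one must prove directly that $X$ is at most $2$-dimensional and free of wild local behaviour; without uniform local contractibility the sphere can collapse drastically, and ruling out a dimension jump requires a genuine filling argument, using that every loop in $S_n$ bounds a disc in order to fill loops in $X$ by $2$-dimensional images. Second, the neck combinatorics must be organised so that infinitely many separating curves may accumulate: a convergent ``string of pearls'' shows that $X$ can contain infinitely many sphere-pieces, and one has to ensure that the capping procedure and the cyclic decomposition remain compatible in the limit. Establishing these two points rigorously, and matching cut points of $X$ with genuinely separating small Jordan curves in the $S_n$, is where the substance of the proof lies.
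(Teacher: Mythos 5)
This statement is not proved in the paper at all: it is quoted in the preliminaries as a classical result of Whyburn \comp{p. 419}{Why35}, so there is no in-paper argument to compare yours with. Judged on its own, your text is a plan in the classical spirit (cyclic element theory plus a Zippin-type characterisation of the $2$-sphere), which is indeed the right framework, but it is not a proof: you yourself flag the decisive steps as ``where the substance of the proof lies'' and then do not carry them out.

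Concretely, three gaps remain. First, the matching of cut points of $X$ with small separating Jordan curves in the approximating spheres $S_n$, and the subsequent cut-and-cap construction, are only described; you must actually produce the curves, control the case of infinitely many cut points and maximal cyclic subsets accumulating (Lemma \ref{lem_topo} only gives that their diameters tend to $0$), define the length metrics on the capped pieces, and prove that a fixed maximal cyclic subset $C$ is the Gromov--Hausdorff limit of those pieces. Second, the hypotheses of the sphere characterisation are not verified in the limit: the property ``every Jordan curve separates'' does not obviously pass to a Gromov--Hausdorff limit, since a Jordan curve in $C$ need not be a limit of Jordan curves in $S_n$, and even when it is, the two complementary discs can collapse onto one another so that separation is lost; moreover the Zippin characterisation also requires that no arc separates, a condition your outline never mentions or checks. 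Third, the dimension bound and local simple connectivity of the limit are asserted to require ``a genuine filling argument'' that is not supplied. Until these points are established, the argument is a programme rather than a proof; if you only need the statement for the purposes of this paper, the correct move is to cite Whyburn as the author does.
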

\subsection{Topology of Peano Spaces}
This subsection is devoted to Peano Spaces. Our main source about this topic is \cite{Why42}.\\
A compact connected locally connected metric space is called a \emph{Peano space}.\\
From a topological point of view there is no difference between Peano spaces and compact length spaces:
\begin{trm}\label{trm_Peano}\comp{p. 1109}{Bin49}
Every Peano space is homeomorphic to a compact length space. 
\end{trm}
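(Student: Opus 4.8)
The plan is to put on the underlying set of $X$ a new metric $\rho$ that is compatible with the given topology and is a length metric. Since $X$ is compact, such a $\rho$ is automatically complete, and the identity map is then a homeomorphism from $X$ onto the compact length space $(X,\rho)$. One should note at the outset why the obvious candidate fails: although $X$ is arcwise connected (being a Peano continuum), it need not contain any rectifiable arc, so the intrinsic metric obtained by taking the infimum of the original lengths of paths may be identically $+\infty$. I would therefore build $\rho$ from scratch, following the convex metrization approach of Bing and Moise.

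The first step is a reduction to Menger convexity. It suffices to find a metric $\rho$ on $X$, inducing the original topology, that is \emph{Menger convex}, meaning that for every pair of distinct points $x,y$ there is a point $z$, different from both, with $\rho(x,z)+\rho(z,y)=\rho(x,y)$. Granting this, I would invoke Menger's theorem: a complete Menger convex metric space is geodesic, hence a length space. I would include the proof of this bridging step, which is standard: iterating the intermediate-point property produces an isometric copy of the dyadic rationals of a segment joining $x$ to $y$, and completeness fills in the gaps to yield an honest shortest path. Thus the entire theorem reduces to the construction of a compatible convex metric.

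For the construction I would use local connectedness together with compactness to obtain, for each $n$, a finite cover of $X$ by open connected sets of diameter less than $2^{-n}$, chosen so that the cover at level $n+1$ refines the one at level $n$. These nested partitions form a brick decomposition. At each level I would pass to the connected graph whose vertices are the pieces and whose edges record nonempty overlaps, weight the edges by quantities tending to $0$ with the mesh, and define $\rho(x,y)$ through the lengths of chains of overlapping pieces joining $x$ to $y$, combined across all levels. The purpose of working with a refining family is that a nearly length-minimizing chain at one level can be subdivided at a finer level, and a middle piece of the subdivided chain supplies the intermediate point $z$ demanded by convexity.

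The main obstacle is exactly this construction, because three requirements must hold simultaneously and they pull against one another. First, $\rho$ must be a genuine metric: symmetry is immediate, but positivity and the triangle inequality force the edge weights to decay quickly enough that distinct points are never identified while chain lengths remain finite and summable. Second, $\rho$ must induce the original topology, which is governed by the mesh tending to $0$ and by local connectedness, the latter preventing the new metric from collapsing or tearing small neighborhoods. Third, $\rho$ must be convex, which is the content of the subdivision argument above. Tuning the weights so that all three properties hold at once is the delicate combinatorial and geometric core of the proof, and is where the substance of the arguments of Bing and Moise lies.
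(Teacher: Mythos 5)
Your strategy is the right one, and it is in fact the strategy of the source the paper cites for this theorem: the paper offers no proof of its own, only the reference to Bing, whose argument (like Moise's parallel one) is precisely the convex-metrization-via-partitionings scheme you describe. Your reduction is also correctly set up: a metric compatible with the topology of a compact space is automatically complete, and Menger's theorem upgrades a complete Menger convex metric to a geodesic one, so everything does come down to producing a compatible convex metric. Your opening remark about why the naive intrinsic metric fails (a Peano continuum need not contain a single rectifiable arc) is a genuinely important observation and is exactly why the theorem is nontrivial.

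The gap is that the proof stops where the theorem begins. Everything after ``For the construction'' is a description of what must be achieved rather than an argument that it can be: you do not construct the refining partitionings (Bing needs genuine partitionings --- finitely many connected open sets with disjoint interiors whose closures cover --- not merely refining open covers, and arranging connectedness of the pieces together with the refinement property already uses the Peano hypothesis in an essential way), you do not specify the edge weights or the rule combining chain lengths across levels, and you do not verify any of the three properties (positivity and the triangle inequality, compatibility with the topology, convexity). Your own closing sentence concedes that tuning the weights so that these hold simultaneously ``is where the substance of the arguments of Bing and Moise lies''; that substance is the entire content of the theorem, so what you have written is an accurate and well-motivated outline of the known proof, but not a proof. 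To close the gap you would either have to carry out the partitioning construction and the consistency estimates between consecutive levels in full, or simply cite Bing's theorem as the paper does.
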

\noindent
A point $x$ of a connected metric space $X$ is denoted as a \emph{cut point} of $X$, if $X\setminus\cp{x}$ is disconnected. Moreover we say that $x$ is a \emph{local cut point} of $X$ provided there is some connected open neighborhood $U$ of $x$ such that $U\setminus\cp{x}$ is disconnected.\\ 
The following result summarizes basic properties of Peano spaces:
\begin{lem}\label{lem_topo}\comp{pp. 65-71, 143}{Why42} Let $X$ be a Peano space space and $T$ be a maximal cyclic subset of $X$. Then the following statements apply:
\begin{itemize}
\item[1)] There are only countably many maximal cyclic subsets in $X$.
\item[2)] There are only countably many connected components in $X\setminus T$.
\item[3)] $T$ contains only countably many cut points of $X$. 
\item[4)] If $\seq{C}{n}$ is a sequence of pairwise distinct connected components of $X\setminus T$, then $\conv{diam\p{C_n}}{0}$.
\item[5)] If $\seq{T}{n}$ is a sequence of pairwise distinct maximal cyclic subsets of $X$, then $\conv{diam\p{T_n}}{0}$. 
\item[6)] If $C$ is a connected component of $X\setminus T$, then there is some $x\in T$ such that $\partial C=\cp{x}$.
\item[7)] The map $r\colon X\to T$ such that $r(x)=x$ for every $x\in T$ and $r(x)\in\partial C$ for every connected component $C$ of $X\setminus T$ and $x\in C$ is continuous. 
\end{itemize}
\end{lem}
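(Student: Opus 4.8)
Since the lemma collects the basic structure theory of cyclic elements, the plan is to derive all seven items from two facts about the conjugacy relation on the Peano space $X$, where two points are conjugate if they agree or lie on a common Jordan curve. The first fact is foundational: conjugacy is an equivalence relation whose nondegenerate classes are exactly the maximal cyclic subsets, and each such class is a closed Peano subspace. The second is the technical engine I would call the \emph{null family property}: for every $\varepsilon>0$ only finitely many maximal cyclic subsets of $X$ have diameter at least $\varepsilon$, and for a fixed $T$ only finitely many components of $X\setminus T$ do. I would prove it from the uniform local connectedness of $X$: for small $\delta$ one writes $X$ as a finite union of connected sets of diameter below $\delta$, and a cyclic subset or complementary component of diameter at least $\varepsilon$ must overlap several of these pieces in a way that cannot recur for infinitely many pairwise almost disjoint such sets. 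Granting this, parts 4 and 5 are just the null family property restated, and parts 1 and 2 follow by taking $\varepsilon=1/n$ and forming a countable union of finite sets.

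The one place where maximality of $T$ enters directly is part 6. Let $C$ be a component of $X\setminus T$; as $T$ is closed, $C$ is open and $\partial C\subseteq T$. If $\partial C$ contained distinct points $x$ and $y$, I would connect them by an arc inside the cyclically connected set $T$ and by a second arc meeting $T$ only at $x,y$ and otherwise running through $C$; their union is a Jordan curve on which an interior point $p\in C$ is conjugate to $x\in T$. Then $p$ would belong to the maximal cyclic subset of $x$, namely $T$, contradicting $p\in X\setminus T$. Hence $\partial C$ is a single point. Part 3 is then pure bookkeeping: no point of $T$ is a cut point of $T$ itself, because removing a point from a Jordan curve leaves an arc, so a point of $T$ can only be a cut point of $X$ by detaching something glued on outside $T$; every such point is the attaching point of a complementary component or of another maximal cyclic subset, and by parts 1 and 2 there are only countably many of these.

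For part 7 I would check continuity of $r$ sequentially. The map is the identity on $T$, so the only delicate points are those of $T$ accumulated by complementary components. Given $\conv{z_n}{x}$ with $x\in T$, the indices with $z_n\in T$ contribute $r(z_n)=z_n\to x$, while an index with $z_n$ in a component $C_n$ contributes the boundary point $r(z_n)\in\partial C_n$. Since $z_n\to x$ forces the components $C_n$ that are met to have diameters tending to $0$ by part 4, their boundary points also converge to $x$, so $\conv{r(z_n)}{r(x)}$ and $r$ is continuous.

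I expect the null family property to be the main obstacle, since every countability statement and every diameter and continuity assertion ultimately rests on it, and its proof is the point at which local connectedness is used essentially rather than only formally. The transitivity of conjugacy is equally indispensable but classical, and I would import it from the cyclic element theory cited in the lemma rather than reprove it.
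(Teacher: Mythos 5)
The paper gives no proof of this lemma at all: it is imported verbatim from Whyburn's cyclic element theory with a page citation, so the only thing to measure your argument against is the classical development itself. Against that standard, your overall architecture is the right one, and your arguments for parts 6, 3 and 7 are essentially correct (part 6 by the Jordan curve built from an arc in $T$ and an accessible arc through $C$, part 3 by observing that a point of $T$ cuts $X$ only if it is the attaching point of a complementary component, part 7 by splitting a convergent sequence according to which components it visits).

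The genuine gap is exactly where you predicted it: the null family property, which you only gesture at. Your sketch derives it from property S by a counting argument, but for complementary components no such argument can work, because the statement it would prove is false at that level of generality: the components of the complement of an arbitrary closed subset of a Peano space need not form a null family. Take $X=[0,1]^2$ and $A=\partial X\cup\bigcup_{n\ge 2}\p{\cp{1/n}\times[0,1]}$; then $A$ is closed, $X$ has property S, and $X\setminus A$ has infinitely many pairwise disjoint components of diameter at least $1$. What rescues the case $A=T$ is precisely part 6: once each component $C$ satisfies $\partial C=\cp{x_C}$, choose $y_n\in C_n$ with $d\p{y_n,x_{C_n}}\ge\nicefrac{\varepsilon}{2}$; any arc joining $y_n$ to a point outside $C_n$ must pass through $x_{C_n}$ and hence has diameter at least $\nicefrac{\varepsilon}{2}$, which contradicts uniform local arcwise connectedness if infinitely many such $y_n$ accumulate. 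So the logical order is forced to be 6 before 4, the opposite of your plan. For part 5 the counting argument likewise does not close: the standard proof takes two distinct maximal cyclic subsets of diameter at least $\varepsilon$ that come close to each other near two points $a,b$ with $d(a,b)\ge\nicefrac{\varepsilon}{2}$, joins them by two disjoint short arcs, and shows that the union is cyclicly connected, contradicting maximality. Both repairs are classical, but neither is a refinement of the argument you wrote down; as it stands the engine on which parts 1, 2, 4 and 5 rest is missing.
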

\subsection{Topology of Closed Surfaces}
We want to fix two results concerning closed surfaces. The first one describes a property which is shared by their fundamental groups:
\begin{prop}\label{prop_free}\comptwo{pp. 141, 264}{Loe17}{pp. 1480-1481}{Bab95} Let $S$ be a closed surface. Then $\pi_1(S)$ is not isomorphic to a free product of non-trivial groups.
\end{prop}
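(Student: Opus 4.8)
The plan is to count \emph{ends} of groups. The whole statement follows from two facts: a nontrivial free product of groups is always infinite and has at least two ends, whereas the fundamental group of any closed surface has at most one end. Writing a putative decomposition $\pi_1(S)\cong A\ast B$ with $A,B$ nontrivial then leads to a contradiction on the number of ends (or already on cardinality in the finite cases).

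First I would appeal to the classification of closed surfaces and separate two regimes. For the sphere and the projective plane, $\pi_1(S)$ is the trivial group, respectively $\mathbb{Z}_2$, hence finite. Since the normal form theorem exhibits infinitely many distinct reduced words in any free product of two nontrivial groups, such a product is infinite; therefore neither the trivial group nor $\mathbb{Z}_2$ can be one, which disposes of these two surfaces. Every other closed surface carries a complete flat or hyperbolic metric, so its universal cover is $\mathbb{R}^2$ (or $\mathbb{H}^2$, which is homeomorphic to $\mathbb{R}^2$) and $\pi_1(S)$ acts on it properly discontinuously, cocompactly and by isometries. By the \v{S}varc--Milnor lemma $\pi_1(S)$ is then quasi-isometric to $\mathbb{R}^2$, and since the number of ends is a quasi-isometry invariant and $\mathbb{R}^2$ has exactly one end, $\pi_1(S)$ is one-ended.

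It remains to see that a nontrivial free product has at least two ends. If $\pi_1(S)\cong A\ast B$ with $A,B$ nontrivial, then $\pi_1(S)$ being finitely generated forces, by Grushko's theorem, $A$ and $B$ to be finitely generated as well. A direct inspection of the Bass--Serre tree of this splitting (the elementary direction of Stallings' theorem) shows that such a group has two ends, in the single case $\mathbb{Z}_2\ast\mathbb{Z}_2$, or infinitely many ends otherwise; in both situations more than one. This contradicts the one-endedness established above and finishes the proof. I expect the main obstacle to be purely organisational: one has to feed in the correct geometric input for the non-spherical surfaces (a flat or hyperbolic structure with universal cover $\mathbb{R}^2$, so that \v{S}varc--Milnor applies), together with the quasi-isometry invariance of the end count and the end count of free products. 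Each ingredient is standard and contained in the cited references, so no new computation arises; the care lies in treating the finite-$\pi_1$ surfaces separately and in checking that the geometric structures are available for every remaining surface.
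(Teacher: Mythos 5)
Your argument is correct and complete: the finite groups $1$ and $\mathbb{Z}_2$ cannot be nontrivial free products, and for all other closed surfaces the \v{S}varc--Milnor lemma applied to the flat or hyperbolic universal cover $\mathbb{R}^2\cong\mathbb{H}^2$ gives one-endedness, which is incompatible with the two or infinitely many ends of a nontrivial free product. The paper does not prove this proposition itself but merely cites the literature, and the ends-based argument you give is precisely the standard proof contained in those references, so there is nothing to fault beyond noting that Grushko's theorem is not actually needed (the end count of $A\ast B$ with $A,B$ nontrivial is $\ge 2$ regardless).
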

\noindent
Finally we have the following classification of non-contractible Jordan curves in closed surfaces:
\begin{prop}\label{prop_jord}\comp{pp. 54-55}{MST16}
Let $S$ be a closed surface of connectivity number $c$ and $J$ be a non-contractible Jordan curve in $S$. Then the topological quotient $X\coloneqq\faktor{S}{J}$ can be described in one of the following ways:
\begin{itemize}
\item[1)] There are $c_1,c_2\inN$ with $c_1+c_2=c$, a closed surface $S_1$ of connectivity number $c_1$ and a closed surface $S_2$ of connectivity number $c_2$ such that $X$ is a wedge sum of $S_1$ and $S_2$. Moreover at least one of the surfaces in non-orientable if and only if $S$ is non-orientable.
\item[2)] There is a closed surface of connectivity number $c-2$ such that $X$ is a topological 2-point identification of it. Moreover the surface is orientable if $S$ is orientable.
\item[3)] $X$ is a closed surface of connectivity number $c-1$ and $S$ is non-orientable. 
\end{itemize}
\end{prop}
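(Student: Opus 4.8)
The plan is to read off the homeomorphism type of $X=\faktor{S}{J}$ from a regular neighborhood of the Jordan curve $J$. The key input from surface topology is the dichotomy for the neighborhood of a simple closed curve: either $J$ is \emph{two-sided}, with a neighborhood homeomorphic to an open annulus $S^1\times(-1,1)$ and $J=S^1\times\{0\}$, or $J$ is \emph{one-sided}, with a neighborhood homeomorphic to a Möbius band having $J$ as its core; the latter can occur only when $S$ is non-orientable. In the two-sided case I would split further according to whether $J$ separates $S$. This produces exactly three configurations, and I claim they correspond precisely to the three alternatives of the statement.

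Before the case analysis I would isolate two local facts. The first is that collapsing a boundary circle of a compact surface to a point agrees, up to homeomorphism, with capping that circle by a disk: a half-open collar $S^1\times[0,1)$ with $S^1\times\{0\}$ collapsed is the open cone on $S^1$, i.e.\ an open disk, so $A/\partial A\cong A\cup_{\partial A}\mathrm{Cone}(S^1)=A\cup_{\partial A}D^2$. The second is that a Möbius band modulo its core is a disk whose boundary is the boundary of the band; I would see this by realising the Möbius band as the mapping cylinder of the degree-two self-map of $S^1$, since collapsing the target circle (the core) turns the mapping cylinder into the mapping cone, which is the cone on the domain $S^1$, hence a disk. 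In particular the image of $J$ is then an interior manifold point.

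I would then treat the three configurations in turn. If $J$ is two-sided and separating, cutting along $J$ yields two compact surfaces $A,B$ each with one boundary circle; collapsing $J$ to a single point caps both boundary circles and identifies the two cap centres, so by the first local fact $X=S_A\vee S_B$ is a wedge of two closed surfaces, giving alternative~1). If $J$ is two-sided and non-separating, cutting gives a single connected surface with two boundary circles whose capping is a closed surface $S''$, and collapsing $J$ identifies the two cap centres, so $X$ is a topological $2$-point identification of $S''$, giving alternative~2). If $J$ is one-sided, the second local fact replaces the Möbius neighborhood by a disk, so $X$ is obtained from the complement of the band by capping its single boundary circle; this is a closed surface, giving alternative~3). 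The connectivity numbers are then pinned down through the relation $c=2-\chi$, valid for closed surfaces over $\mathbb{Z}_2$, together with $\chi(X)=\chi(S)+1$ (a circle is collapsed to a point): a short count—using $\chi(S_A)+\chi(S_B)=\chi(S)+2$ in the separating case, $\chi(S'')=\chi(S)+2$ in the non-separating case, and $\chi(X)=\chi(S)+1$ in the one-sided case—yields the values $c_1+c_2=c$, $c-2$ and $c-1$ respectively.

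The remaining bookkeeping is orientability. Here I would argue that a two-sided separating curve glues $S$ from $A$ and $B$ along an annulus, so $S$ is orientable exactly when both pieces are, which is the asserted equivalence in alternative~1); and that cutting and capping preserve orientability, so in alternative~2) the surface $S''$ is orientable whenever $S$ is. I expect the main obstacle to be the passage from these local models to a genuine homeomorphism of the global quotient: verifying that the identifications near $J$ patch with the complement to give exactly a wedge, a $2$-point identification, or a closed surface, and in particular that the image point of $J$ has the claimed local structure—two open disks meeting at a point in the non-separating case, and an interior manifold point in the one-sided case. Once the two local facts above are in place this patching is routine, but it is where the care is needed.
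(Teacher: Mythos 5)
The paper offers no proof of this proposition: it is imported wholesale from the reference \cite{MST16} via the citation, so there is no in-paper argument to compare yours against. Judged on its own terms, your route is the standard one and essentially sound: the trichotomy two-sided separating / two-sided non-separating / one-sided is exhaustive (a one-sided curve is automatically non-separating, since a M\"obius band minus its core is connected), your two local facts are correct, and the bookkeeping via $c=2-\chi$ and $\chi(S/J)=\chi(S)+1$ gives the right connectivity numbers. Two minor blemishes: the quotient of the mapping cylinder of the degree-two map by the \emph{target} circle is indeed the cone on the domain, i.e.\ a disc, but it is not the mapping cone (that would be the quotient by the domain end, which is $\mathbb{R}P^2$); and the existence of an annulus or M\"obius band neighborhood for a merely topological Jordan curve rests on the tameness of embedded circles in surfaces, which you should state as an input rather than leave implicit.

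The one genuine gap is that you never invoke the hypothesis that $J$ is non-contractible, and the statement is false without it. Your case analysis applies verbatim to a contractible Jordan curve: such a curve is two-sided and separating with one complementary piece a disc, and your argument then produces $X\cong S^2\vee S$ with $c_1=0$, violating the requirement $c_1,c_2\in\mathbb{N}$ in alternative 1). So in the separating case you must add the step: since $J$ is not null-homotopic it bounds no disc (this is exactly the result of \cite{Eps66} that the paper uses elsewhere), hence neither piece $A$ nor $B$ is a disc, hence neither capped surface is a sphere, hence $c_1,c_2\ge 1$. In the other two cases non-contractibility is automatic, as one-sided and two-sided non-separating curves are non-trivial in $H_1(S;\mathbb{Z}_2)$. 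With that one observation inserted, your proof is complete.
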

\subsection{Fundamental Groups of Gluings}
We present two results which are related to the Seifert-Van Kampen Theorem.
\begin{prop}\label{prop_SVK}\comp{p. 176}{Gri54} Let $X$ and $Y$ be locally simply connected and path connected metric spaces. Then the fundamental group of a wedge sum of $X$ and $Y$ is isomorphic to $\pi_1(X)\ast\pi_1(Y)$.   
\end{prop}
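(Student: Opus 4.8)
The plan is to compute $\pi_1\p{X\vee Y}$ with a Seifert--van Kampen argument on an open cover adapted to the wedge point, using the local hypothesis only to control that point. Write $p$ for the gluing point, endow $X\vee Y$ with the metric $d\p{x,y}=d_X\p{x,p}+d_Y\p{p,y}$ for $x\in X$ and $y\in Y$, and let $\Phi\colon\pi_1(X)\ast\pi_1(Y)\to\pi_1\p{X\vee Y}$ be the homomorphism induced by the inclusions. First I would use local simple connectivity to choose, for each $\varepsilon>0$, simply connected open neighborhoods $A_\varepsilon\subseteq X$ and $C_\varepsilon\subseteq Y$ of $p$ contained in the respective $\varepsilon$-balls about $p$; since these form neighborhood bases, a loop lying in a sufficiently small ball about $p$ admits a null-homotopy of correspondingly small diameter. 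It is exactly this quantitative control that is absent for pathological wedges such as the Hawaiian earring.

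The technical core is a \emph{reduction lemma}: every loop $\gamma$ based at $p$ is path-homotopic to a finite concatenation of loops lying alternately in $X$ and in $Y$. To prove it, I would note that $\gamma^{-1}\p{p}$ is closed and its complement is a countable union of open intervals, on each of which $\gamma$ stays in one of the two factors. By uniform continuity of $\gamma$ only finitely many of these intervals leave a prescribed small ball about $p$; the corresponding finitely many excursions are honest loops in $X$ or $Y$, while each of the remaining infinitely many excursions is a small loop near $p$ that I would contract inside the simply connected neighborhoods, assembling all these contractions into a single homotopy. Taking both fundamental groups trivial, this lemma specializes to the statement that a wedge of two simply connected spaces is simply connected; combined with the retraction collapsing a simply connected summand, it also yields $\pi_1\p{Z\vee W}\cong\pi_1(Z)$ whenever $W$ is simply connected.

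With the lemma available I would apply van Kampen to the open cover $U\coloneqq X\cup C_\varepsilon$ and $V\coloneqq A_\varepsilon\cup Y$, which satisfies $U\cup V=X\vee Y$ and $U\cap V=A_\varepsilon\vee C_\varepsilon$. All three pieces are open, path connected and contain $p$; the intersection is simply connected by the special case above, and $U=X\vee C_\varepsilon$ and $V=A_\varepsilon\vee Y$ have fundamental groups $\pi_1(X)$ and $\pi_1(Y)$ by the consequence for wedges with a simply connected summand. Since the amalgamating group $\pi_1\p{U\cap V}$ is trivial, the van Kampen pushout is the free product $\pi_1(X)\ast\pi_1(Y)$, and chasing the generators shows that the resulting isomorphism is exactly $\Phi$.

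The main obstacle is the continuity of the assembled homotopy in the reduction lemma. The finitely many large excursions are unproblematic, but the infinitely many small ones accumulate along $\gamma^{-1}\p{p}$, and a single simply connected neighborhood does not suffice: a null-homotopy of a tiny loop could wander far from $p$ inside that neighborhood, so the glued homotopy would fail to be continuous at a limit of excursion intervals. The neighborhood-basis form of the hypothesis repairs this by keeping the contractions small; making this precise --- selecting a nested sequence $\cdots\subseteq A_2\subseteq A_1$ of simply connected neighborhoods with loops in $A_{n+1}$ contractible inside $A_n$ and $\conv{diam\p{A_n}}{0}$, and likewise in $Y$ --- is the step demanding the most care.
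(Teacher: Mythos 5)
The paper offers no proof of this proposition: it is quoted directly from Griffiths' 1954 paper on the fundamental group of two spaces with a common point, so there is no in-text argument to compare against. Your proposal is essentially the standard proof of Griffiths' theorem, and its architecture is sound: the reduction lemma (every loop at $p$ is homotopic rel endpoints to a finite alternating concatenation of loops in the two factors) is precisely where local simple connectivity at the wedge point enters --- and where the Griffiths twin cone shows it cannot be dropped --- and once it is available, the van Kampen cover $U=X\vee C_\varepsilon$, $V=A_\varepsilon\vee Y$ with simply connected intersection $A_\varepsilon\vee C_\varepsilon$ yields the free product, injectivity of $\Phi$ included. You also correctly isolate the only delicate step, the continuity of the assembled null-homotopies of the infinitely many small excursions, and your fix is the right one; to make it precise, choose open simply connected neighborhoods $A_m$, $C_m$ of $p$ with diameters tending to $0$, assign to each excursion interval the largest $m$ for which its image lies in $A_m$ (resp.\ $C_m$), and contract it inside that very set. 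Since for each $m$ only finitely many excursions fail to lie in $A_m\cup C_m$ (your uniform continuity argument), all but finitely many of the contractions occurring near an accumulation point of $\gamma^{-1}\p{p}$ take place in sets of diameter below any prescribed bound, which gives continuity there; continuity elsewhere is local and unproblematic. Two minor points to watch: the neighborhoods $A_\varepsilon$, $C_\varepsilon$ must be \emph{open} for the van Kampen cover to consist of open sets (so read the hypothesis as supplying a basis of open simply connected neighborhoods), and the identification $\pi_1\p{X\vee C_\varepsilon}\cong\pi_1\p{X}$ needs both the reduction lemma (surjectivity) and the retraction collapsing $C_\varepsilon$ (injectivity), as you indicate.
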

\noindent
Since metric 2-point identifications play an important role in our investigation, the next result provides a useful tool. It follows from the HNN-Seifert Van Kampen Theorem in \cite{Fri22}[p. 1292-1293].
\begin{prop}\label{prop_HNN}
Let $X$ be a locally-simply connected and path-connected metric space. Then the following statements apply:
\begin{itemize}
\item[1)] If $Y$ is a topological 2-point identification of $X$, then the fundamental group $\pi_1(Y)$ is isomorphic to $\pi_1(X)\ast\mathbb{Z}$. 
\item[2)] If $X$ contains a local cut point not being a cut point, then there is a group $G$ such that the fundamental group $\pi_1(X)$ is isomorphic to $G\ast\mathbb{Z}$. 
\end{itemize}
\end{prop}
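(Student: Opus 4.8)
The plan is to obtain both statements from the HNN-Seifert-Van Kampen theorem of \cite{Fri22}, exploiting that identifying two points is, on the level of fundamental groups, an HNN extension over the trivial subgroup. For 1), write $Y=X/{\sim}$, where $\sim$ collapses two distinct points $p,q\in X$ to one point. I would present this gluing as a graph of spaces with a single vertex space $X$ and one edge whose two ends are attached to the one-point subspaces $\{p\}$ and $\{q\}$; equivalently, one forms $Z$ by attaching an arc $\gamma$ from $p$ to $q$ to $X$, so that collapsing the contractible arc gives a homotopy equivalence $Z\to Y$. Since both edge spaces are points, the associated subgroups of $\pi_1(X)$ are trivial, and the HNN extension of a group $G$ over the trivial subgroup is $\langle G,t\mid\varnothing\rangle\cong G\ast\mathbb{Z}$, the stable letter contributing the free factor. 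Applying the theorem with $G=\pi_1(X)$ yields $\pi_1(Y)\cong\pi_1(X)\ast\mathbb{Z}$; the new generator is the loop traversing $\gamma$ and returning along a path in $X$ from $q$ to $p$. Here the assumption that $X$ is locally simply connected is what secures the hypotheses of the generalized van Kampen theorem, which cannot be dropped for non-CW spaces, as the Hawaiian earring shows.

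For 2), I would reduce to 1) by \emph{splitting} the local cut point. Let $x$ be a local cut point that is not a cut point, choose a connected open neighborhood $U$ of $x$ with $U\setminus\{x\}$ disconnected, and partition $U\setminus\{x\}$ into two nonempty relatively clopen sets $A$ and $B$. I define $X'$ to be the set $(X\setminus\{x\})\cup\{x_1,x_2\}$, where the topology on $X\setminus\{x\}$ is unchanged while a basic neighborhood of $x_1$ is $\{x_1\}\cup(B(x,\varepsilon)\cap A)$ and of $x_2$ is $\{x_2\}\cup(B(x,\varepsilon)\cap B)$. The map $X'\to X$ that collapses $\{x_1,x_2\}$ to $x$ and is the identity elsewhere then exhibits $X$ as the topological two-point identification $X'/(x_1\sim x_2)$, so that 1) gives $\pi_1(X)\cong\pi_1(X')\ast\mathbb{Z}$ with $G\coloneqq\pi_1(X')$.

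It remains to verify that $X'$ is again a locally simply connected, path-connected metric space. Path-connectedness is exactly where the hypothesis enters: because $x$ is not a cut point, $X\setminus\{x\}$ is connected, hence so is $X'\setminus\{x_1,x_2\}$, and local connectedness lets one join $x_1$ and $x_2$ to it; local simple connectivity is inherited because the construction only refines a neighborhood of $x$. As $X'$ is compact, Hausdorff and second countable, Urysohn's metrization theorem provides a compatible metric, so 1) indeed applies. I expect the main obstacle to be precisely this construction: checking that $X'$ is Hausdorff and compact, genuinely path-connected, and that its two-point identification recovers $X$ topologically. The non-cut-point hypothesis is indispensable here, since if $x$ separated $X$ the split space would be disconnected and the identification would produce a free product of two nontrivial groups instead of the free factor $\mathbb{Z}$.
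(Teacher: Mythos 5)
The paper itself offers no written proof of this proposition beyond the citation of the HNN--Seifert--Van Kampen theorem, and your overall strategy --- trivial edge groups for 1), and splitting the local cut point to reduce 2) to 1) --- is exactly the derivation that citation is meant to encode. Part 1) is fine as you present it (the direct application of the theorem to the identification; the arc model is an unnecessary detour whose homotopy equivalence would itself need a cofibration argument).

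In part 2), however, two steps are asserted rather than proved, and one of them is the crux. First, the claim that ``local simple connectivity is inherited because the construction only refines a neighborhood of $x$'' is not a proof: a neighborhood of $x_1$ in $X'$ is homeomorphic to $\cp{x}\cup\p{V\cap A}$, which is a \emph{subset} of a simply connected neighborhood $V$ of $x$, and subsets of simply connected sets need not be simply connected. The gap is fillable: since $A$ and $B$ are relatively open in $U\setminus\cp{x}$, the map $r\colon V\to\cp{x}\cup\p{V\cap A}$ fixing $V\cap A$ and sending $\p{V\cap B}\cup\cp{x}$ to $x$ is a continuous retraction, so any loop in $\cp{x}\cup\p{V\cap A}$ that contracts in $V$ contracts there after composing the null-homotopy with $r$; the same retraction gives path-connectedness of these neighborhoods. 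You should supply this argument (or something equivalent), because without it the reduction to 1) does not go through. Second, your metrizability argument invokes compactness of $X'$, but the proposition assumes only that $X$ is a metric space, so compactness is not available; you need either a direct metric on $X'$ or the Bing--Nagata--Smirnov criterion (regularity plus a $\sigma$-locally finite base, which $X'$ inherits from $X$ after adjoining countably many basic neighborhoods of $x_1$ and $x_2$). With these two repairs the proposal is a correct implementation of the intended argument.
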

\glsxtrnewsymbol[description={The class of compact metric spaces.}]{1}{$\mathcal{M}$}
\glsxtrnewsymbol[description={The class of closed length surfaces whose connectivity number is equal to $c$.}]{2}{$\mathcal{S}(c)$}
\glsxtrnewsymbol[description={The class of spaces in $\mathcal{S}(c)$ that do not contain non-contractible loops of diameter less than $2\varepsilon$.}]{3}{$\mathcal{S}\p{c,\varepsilon}$}
\glsxtrnewsymbol[description={The class of geodesic generalized cactoids such that the connectivity numbers of their maximal cyclic subsets sum up to $c$}.]{4}{$\mathcal{G}(c)$}
\glsxtrnewsymbol[description={The class of successive metric wedge sums of non-degenerate cyclicly connected compact length spaces and finite metric trees.}]{5}{$\mathcal{W}$}
\glsxtrnewsymbol[description={The class of successive metric wedge sums of closed length surfaces such that every wedge point is only shared by two of their surfaces.}]{6}{$\mathcal{W}_0$}
\printunsrtglossary[type=symbols,style=long,title=Notation]
\noindent
We note that we allow a change of the wedge point in every construction step of a successive metric wedge sum.
\section{Topology via Maximal Cyclic Subsets}\label{section_limits}
\noindent
In this section we investigate the topological connection between maximal cyclic subsets and their ambient space. From the results we derive first topological properties of generalized cactoids.
\subsection{A Dimension Bound}
First we consider the dimension of Peano spaces. Throughout this work the term dimension refers to the covering dimension.\\
There are many properties which are satisfied by the whole Peano space provided they are shared by all its maximal cyclic subsets \comp{pp. 81-83 }{Why42}. The next result contains an example of such a property:
\begin{prop}
Let $X$ be a Peano space. Then the following statements apply:
\begin{itemize}
\item[1)] If all maximal cyclic subsets of $X$ are at most n-dimensional, then $X$ is at most n-dimensional. \comp{p. 82}{Why42}
\item[2)] If $X$ is at most $n$-dimensional and $Y$ is a metric 2-point identification of $X$, then $Y$ is at most $n$-dimensional \comp{pp. 266, 271}{Sak13}.
\end{itemize}
\end{prop}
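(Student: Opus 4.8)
The plan is to reduce both statements to two standard facts of dimension theory, available since $X$ and $Y$ are compact metric: the \emph{countable closed sum theorem}, $\dim\bigcup_i F_i=\sup_i\dim F_i$ whenever the $F_i$ are countably many closed sets, and its finite version, which combined with compactness lets one bound $\dim$ by exhibiting around every point a closed neighbourhood of controlled dimension. Neither statement requires any curvature or length hypothesis, only the topology recorded in Lemma \ref{lem_topo} and the definition of a metric 2-point identification.

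For the first statement I would exploit the cyclic decomposition of $X$. We may assume $n\ge 1$, since a non-degenerate maximal cyclic subset contains a Jordan curve and is therefore at least one-dimensional. The maximal cyclic subsets $T_1,T_2,\ldots$ are closed, have dimension at most $n$ by hypothesis, and by Lemma \ref{lem_topo}(1) there are only countably many of them. It then remains to control the residual set $R\coloneqq X\setminus\bigcup_i T_i$ of points lying in no maximal cyclic subset. No Jordan curve can lie in $R$: such a curve is non-degenerate and cyclicly connected, hence contained in some maximal cyclic subset, contradicting the definition of $R$. Consequently $R$ is dendritic and at most one-dimensional; using the shrinking properties of Lemma \ref{lem_topo}(4) and (5) one organises it into countably many closed subsets of dimension at most one. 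Since $1\le n$, every piece of the cover $X=\bigcup_i T_i\cup R$ has dimension at most $n$, and the countable closed sum theorem yields $\dim X\le n$ \comp{p. 82}{Why42}.

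For the second statement, write $Y$ as the metric quotient of $X$ identifying two points $p,q$ to a single point $y_0$, with quotient map $\pi\colon X\to Y$; as $X$ is compact and $Y$ Hausdorff, $\pi$ is closed. I would produce at every point of $Y$ a closed neighbourhood of dimension at most $n$. Fix $\varepsilon<\tfrac13 d(p,q)$. For a point $y\ne y_0$ choose a closed ball around its (single) preimage missing $\cp{p,q}$; it is saturated, $\pi$ restricts to a homeomorphism on it, and its image is a closed neighbourhood homeomorphic to a subset of $X$, hence of dimension at most $n$. At $y_0$ the set $\pi\p{\overline{B}_\varepsilon(p)}\cup\pi\p{\overline{B}_\varepsilon(q)}$ is a closed neighbourhood: the two balls are disjoint, each maps homeomorphically (so each image has dimension at most $n$), and the images meet only in $y_0$, so the finite closed sum theorem bounds the union by $n$. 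By compactness finitely many such neighbourhoods cover $Y$, and the finite sum theorem gives $\dim Y\le n$ \comp{pp. 266, 271}{Sak13}.

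The routine part is the quotient bookkeeping in the second statement, namely verifying that the chosen balls are saturated and that their images are genuine neighbourhoods carrying the subspace topology of $Y$. The main obstacle lies in the first statement: rigorously establishing that the residual set $R$ is at most one-dimensional and admits a countable cover by closed low-dimensional sets, that is, that the non-cyclic part of a Peano space is genuinely tree-like. This is precisely the structural content behind \comp{p. 82}{Why42}, and it is exactly here that the local connectedness of $X$ together with the countability and diameter-control furnished by Lemma \ref{lem_topo} are indispensable.
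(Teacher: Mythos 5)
The paper offers no proof of this proposition at all; both items are quoted directly from the literature (Whyburn for 1), Sakai for 2)), so there is no argument of the paper to measure yours against, and what follows assesses your sketch on its own terms. Your argument for statement 2) is correct and essentially complete: the quotient map is closed, it restricts to homeomorphisms on the two disjoint closed balls about $p$ and $q$, their images form a closed neighbourhood of $y_0$ whose dimension is controlled by the finite closed sum theorem, and compactness plus that same theorem globalizes the bound. This local argument is genuinely needed, since the generic Hurewicz estimate for closed maps with fibres of cardinality at most $2$ only gives $\dim Y\le\dim X+1$. The one detail you elide is that the \emph{metric} $2$-point identification carries the quotient topology, which is standard for finitely many identifications of a compact metric space.

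Statement 1) contains a genuine gap, and it sits exactly where you locate the ``main obstacle'', but it is worse than an unverified step: the proposed decomposition cannot exist in general. You want to cover $X$ by the countably many closed sets $T_i$ together with countably many closed subsets of $R=X\setminus\bigcup_iT_i$ and invoke the countable closed sum theorem. However $R$ is in general only a $G_\delta$ and need not be an $F_\sigma$: attach to an arc a null sequence of round $2$-spheres along a countable dense set of parameters; then $R$ meets the arc in a copy of the irrationals, which is not $F_\sigma$ by Baire category, so $R$ admits no countable cover by sets closed in $X$. Without that, the sum theorem is unavailable and the Menger--Urysohn addition theorem only yields $\dim X\le n+2$. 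Separately, the inference ``$R$ contains no Jordan curve, hence $\dim R\le1$'' is invalid for arbitrary subsets and even for continua (hereditarily indecomposable continua of every dimension contain no arcs, hence no Jordan curves); it relies on local connectedness, which neither $R$ nor its closure need inherit from $X$. The classical route avoids a global decomposition altogether and verifies the local characterization of dimension: every point of $X$ has arbitrarily small neighbourhoods whose boundary consists of a set of dimension at most $n-1$ inside the finitely many large maximal cyclic subsets it meets together with an at most countable set, and this is where Lemma \ref{lem_topo} and the hypothesis $n\ge1$ actually do their work. Note finally that your reduction to $n\ge1$ presupposes that $X$ possesses at least one maximal cyclic subset; if it has none, the statement as written fails for $n=0$ whenever $X$ is a nondegenerate dendrite.
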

\noindent
As a consequence we derive the following result about generalized cactoids:
\begin{cor}\label{cor_dim}
Let $X$ be a space that can be obtained by a successive application of metric 2-point identifications to a generalized cactoid. Then $X$ is at most 2-dimensional.    
\end{cor}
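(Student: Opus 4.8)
The plan is to reduce the statement to the two parts of the preceding Proposition, connected by a short induction on the number of metric 2-point identifications. First I would treat the base object. A generalized cactoid is by definition a Peano space all of whose maximal cyclic subsets are closed surfaces. Every closed surface is a $2$-manifold and hence has covering dimension equal to two, so in particular all maximal cyclic subsets of the generalized cactoid are at most $2$-dimensional. Applying part 1) of the preceding Proposition, I conclude that the generalized cactoid itself is at most $2$-dimensional.

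Next I would formalize the phrase ``successive application of metric 2-point identifications'' as a finite chain of spaces. Let $k\inN$ be the number of identifications, let $X_0$ denote the starting generalized cactoid, and let $X_i$ be the space obtained from $X_{i-1}$ by the $i$-th metric 2-point identification, so that $X=X_k$. I would then argue by induction on $i$ that each $X_i$ is at most $2$-dimensional. The case $i=0$ is precisely the observation of the previous paragraph. For the inductive step, assuming $X_{i-1}$ is at most $2$-dimensional, part 2) of the preceding Proposition guarantees that its metric 2-point identification $X_i$ is again at most $2$-dimensional. After $k$ steps this yields that $X=X_k$ is at most $2$-dimensional, which is the claim.

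Honestly, I expect no serious obstacle here: once the preceding Proposition is available, the corollary is a direct combination of its two parts. The only points requiring a word of care are the classical fact that closed surfaces have covering dimension exactly two, so that part 1) genuinely applies to the starting cactoid, and the observation that ``successive'' means finitely many identifications, which is what makes the induction on $i$ legitimate. Neither of these introduces any real difficulty.
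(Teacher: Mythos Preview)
Your proposal is correct and matches the paper's intended argument exactly: the paper presents the corollary without a written proof, merely as an immediate consequence of the preceding proposition, and your write-up is precisely the obvious unpacking of that consequence---part~1) handles the generalized cactoid via its surface maximal cyclic subsets, and part~2) is iterated finitely many times for the identifications.
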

\subsection{Local Contractibility}
Now we consider the property of local contractibility.\\
A metric space $X$ is called \emph{locally strongly contractible} provided every $x\in X$ has arbitrarily small open neighborhoods such that $\cp{x}$ is a strong deformation retract of them. For example $X$ is locally strongly contractible if it is a manifold. 
\begin{prop}
Let $X$ be a Peano space. Then the following statements apply:
\begin{itemize}
\item[1)] If $X$ contains infinitely many non-contractible maximal cyclic subsets, then $X$ is not locally contractible.
\item[2)] If $X$ has only finitely many maximal cyclic subsets and all of them are locally strongly contractible, then $X$ is locally strongly contractible. 
\end{itemize}
\end{prop}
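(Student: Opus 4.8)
The plan is to treat the two statements separately, exploiting the cyclic-element structure encoded in Lemma~\ref{lem_topo}.

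For 1) I would argue by contradiction. Suppose $X$ carries infinitely many, and hence a pairwise distinct sequence $\seq{T}{n}$ of, non-contractible maximal cyclic subsets. By part 5) of Lemma~\ref{lem_topo} we have $\conv{diam\p{T_n}}{0}$, so after choosing points $p_n\in T_n$ and passing to a subsequence we may assume $p_n\to p$; then $d\p{q,p}\le diam\p{T_n}+d\p{p_n,p}$ for every $q\in T_n$ shows $T_n\subseteq B\p{p,\varepsilon_n}$ with $\conv{\varepsilon_n}{0}$. If $X$ were locally contractible at $p$, then $p$ would admit arbitrarily small neighborhoods $U$ for which the inclusion $U\hookrightarrow X$ is null-homotopic, and for large $n$ we would have $T_n\subseteq U$, so the inclusion $j\colon T_n\hookrightarrow X$ would be null-homotopic as well. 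On the other hand, part 7) of Lemma~\ref{lem_topo} supplies a retraction $r\colon X\to T_n$, so $r\circ j=id_{T_n}$; composing the null-homotopy of $j$ with $r$ then turns this identity into a null-homotopy of $id_{T_n}$, making $T_n$ contractible and contradicting our choice. Hence $X$ fails to be locally contractible at $p$.

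For 2) I would induct on the number $m$ of maximal cyclic subsets of $X$. If $m=0$, then $X$ contains no Jordan curve and is therefore a dendrite, which is locally strongly contractible. For the inductive step fix one maximal cyclic subset $T$. By parts 6) and 7) of Lemma~\ref{lem_topo} every connected component $C$ of the open set $X\setminus T$ is open, satisfies $\partial C=\cp{x_C}$ for a single point $x_C\in T$, and $\overline{C}=C\cup\cp{x_C}$ is again a Peano space whose maximal cyclic subsets are precisely those of $X$ contained in $\overline{C}$. Since $T$ is not among them, each $\overline{C}$ carries fewer than $m$ maximal cyclic subsets, all locally strongly contractible, so the induction hypothesis renders every $\overline{C}$ locally strongly contractible. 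It then remains to prove local strong contractibility of $X$ at an arbitrary point. For $x\notin T$ this is immediate, since a small neighborhood of $x$ lies in the open component $C\subseteq\overline{C}$ containing it. For $x\in T$ I would combine the local strong contractibility of $T$ with that of the pieces $\overline{C}$: pick an open neighborhood $W$ of $x$ in $T$ with a strong deformation retraction onto $x$, and for each component $C$ attaching at a point $x_C\in W$ pick a small open neighborhood $V_C$ of $x_C$ in $\overline{C}$ strong-deformation-retracting onto $x_C$. Setting $U\coloneqq W\cup\bigcup_{x_C\in W}V_C$ (with the $V_C$ shrunk to remain inside the prescribed ball) gives an open neighborhood of $x$, on which one defines a strong deformation retraction onto $x$ in two phases: first retract each $V_C$ onto its attaching point $x_C\in W$, then retract $W$ onto $x$, keeping $x$ fixed throughout.

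The main obstacle is the continuity of this glued homotopy at points of $T$ where infinitely many components accumulate, and this is exactly where part 4) of Lemma~\ref{lem_topo} enters. Because $\conv{diam\p{C_n}}{0}$, a convergent sequence $y_k\to y_\infty$ with $y_k$ lying in distinct components $C_{n_k}$ forces the attaching points $x_{C_{n_k}}$ to converge to $y_\infty$ while each $y_k$ stays within $diam\p{C_{n_k}}$ of $x_{C_{n_k}}$; together with the continuity of the chosen retractions of $W$ and of the $\overline{C}$ this yields convergence of the glued homotopy. Verifying these estimates, and checking that $U$ is genuinely open and contained in the prescribed ball, is the technical heart of the argument, while everything else reduces to bookkeeping of the cyclic-element decomposition.
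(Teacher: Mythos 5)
Your argument for part 1) is essentially the paper's: both locate an accumulation point $p$ of a sequence of small non-contractible maximal cyclic subsets via part 5) of Lemma \ref{lem_topo} and then use the retraction $r\colon X\to T_n$ from part 7) to push a null-homotopy of the inclusion down to a null-homotopy of $\mathrm{id}_{T_n}$ (the paper phrases this as ``a retract of a contractible neighborhood is contractible''); that part is fine. For part 2) you take a genuinely different route: a single induction that peels off one maximal cyclic subset $T$, applies the hypothesis to the closures $\overline{C}$ of the components of $X\setminus T$, and glues local deformation retractions at points of $T$. The paper instead only performs the explicit gluing in the case of \emph{exactly one} maximal cyclic subset, where every component closure $C_k$ is a compact metric tree, and reduces the case $n\ge 2$ to a wedge sum of two Peano spaces each with fewer maximal cyclic subsets (such a splitting exists by cyclic element theory, and local strong contractibility passes to finite wedges easily). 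The paper's organization is designed precisely so that the delicate infinite gluing only ever involves tree pieces.

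That design choice points to the one real problem with your sketch. As written, you ``pick a small open neighborhood $V_C$ of $x_C$ in $\overline{C}$ strong-deformation-retracting onto $x_C$'' for each of the (typically infinitely many) components attached in $W$, and set $U=W\cup\bigcup V_C$. If the $V_C$ are proper subsets of the $\overline{C}$, then $U$ is in general \emph{not open}: infinitely many components accumulate at points $y\in W$ by part 4) of Lemma \ref{lem_topo}, so every ball around such a $y$ contains entire sets $\overline{C}$, and hence points of $\overline{C}\setminus V_C$; no choice of radius avoids this. Part 4) of the lemma, which you invoke, settles the continuity of the glued homotopy but not this openness issue. The fix is where the finiteness hypothesis must enter: since $X$ has only finitely many maximal cyclic subsets, all but finitely many $\overline{C}$ contain no Jordan curve, hence are dendrites and strong deformation retract onto $x_C$ \emph{in their entirety}; for those you must take $V_C=\overline{C}$ (once $\mathrm{diam}\p{\overline{C}}$ is below the prescribed bound), reserving genuinely small $V_C$ for the finitely many exceptional components, which are at positive distance from any $y\in W$ not equal to their attaching point. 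With that amendment your induction goes through and in fact subsumes the paper's $n=1$ construction; without it the step ``$U$ is an open neighborhood of $x$'' fails.
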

\begin{proof}
1) There is a sequence of pairwise distinct non-contractible maximal cyclic subsets in $X$. From Lemma \ref{lem_topo} follows that there is a subsequence converging to some $p\in X$. Hence an arbitrarily small open neighborhood of $p$ contains a non-contractible maximal cyclic subset of $X$.\\
For the sake of contradiction we assume that $X$ is locally contractible. Then there is a contractible open neighborhood $U$ of $p$. This neighborhood contains some non-contractible maximal cyclic subset $T$ of $X$. By Lemma \ref{lem_topo} we have that $T$ is a retract of $U$. Therefore $T$ is also contractible. A contradiction.\\
2) We denote the number of maximal cyclic subsets in $X$ by $n$.\\
If $n=0$, then $X$ is homeomorphic to a compact metric tree and hence locally strongly contractible \comp{p. 20}{AKP19}.\\
If $n=1$, we consider $p\in X$ and $\varepsilon>0$. We may assume that $p$ is contained in the only maximal cyclic subset $T$ of $X$. Then there is an open neighborhood $U_0$ of $p$ in $T$ with diameter less than $\varepsilon$ such that $\cp{p}$ is a strong deformation retract of $U_0$. Further we may assume that $U_0$ contains infinitely many cut points of $X$.\\
Let $\seq{c}{k}$ be an enumeration of these cut points. We write $C_k$ for the closure of the union of all connected components of $X\setminus T$ whose boundaries are given by $\cp{c_k}$. Then $C_k$ is a compact metric tree. Hence there is an open neighborhood $U_k$ of $c_k$ in $C_k$ with diameter less than $\varepsilon$ such that $\cp{c_k}$ is a strong deformation retract of $U_k$. In particular there is a homotopy $H_k$ between the identity map on $U_k$ and the map sending every point of $U_k$ to $c_k$ such that $H_k\p{c_k,t}=c_k$ for every $t\in\interval{0}{1}$.\\ 
Now we set $U\coloneqq \cup_{k\inN_0}U_k$ and define a map $H\colon U\times\interval{0}{1}\to U$ by:
\begin{align*}
H\p{x,t}=\left\{ \begin{array}{lr}
H_k\p{x,t},&x\in U_k;\\
x,&x\in U_0. 
\end{array}\right.
\end{align*}
The diameter of $U$ is less than $2\varepsilon$. From Lemma \ref{lem_topo} we derive that $U$ is an open neighborhood of $p$ in $X$ and $H$ is continuous. Hence $U_0$ is a strong deformation retract of $U$ and it follows that $\cp{p}$ is a strong deformation retract of $U$. We conclude that $X$ is locally strongly contractible.\\
If $n\ge 2$, then $X$ is homeomorphic to a wedge sum of Peano spaces with less than $n$ maximal cyclic subsets. If both spaces are locally strongly contractible, then so is $X$. Hence the claim follows by induction.  
\end{proof}
\noindent
A metric space whose dimension is finite is an ANR if and only if it is locally contractible \comp{p. 347, 392}{Sak13}. Hence Corollary \ref{cor_dim} and the last proposition imply the following statement:
\begin{cor}\label{cor_gen_ANR}
Let $X$ be a generalized cactoid. Then the following statements are equivalent:
\begin{itemize}
\item[1)] $X$ is an ANR.
\item[2)] $X$ has only finitely many maximal cyclic subsets.
\end{itemize}
\end{cor}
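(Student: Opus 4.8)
The plan is to treat the two implications separately, in each case combining the dimension bound of Corollary~\ref{cor_dim} with the characterization cited above (a finite-dimensional metric space is an ANR precisely when it is locally contractible). The observation on which everything rests is that every closed surface is simultaneously a non-contractible space and a manifold: as a manifold it is locally strongly contractible, while its nontrivial topology — a non-vanishing second homology group in the case of the $2$-sphere and a non-trivial fundamental group otherwise — prevents it from being contractible. Since by definition the maximal cyclic subsets of a generalized cactoid are closed surfaces, both of these features are available for each of them.

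For the direction $2)\Rightarrow 1)$ I would start from a generalized cactoid $X$ with only finitely many maximal cyclic subsets. Each such subset is a closed surface, hence a manifold, hence locally strongly contractible. The second part of the last proposition then yields that $X$ itself is locally strongly contractible, and in particular locally contractible. By Corollary~\ref{cor_dim}, applied with no $2$-point identifications, the space $X$ is at most $2$-dimensional and therefore finite-dimensional, so the cited ANR characterization upgrades local contractibility to the ANR property.

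For $1)\Rightarrow 2)$ I would argue by contraposition. Assuming that $X$ has infinitely many maximal cyclic subsets, I note that each of them is a closed surface and hence non-contractible, so $X$ contains infinitely many non-contractible maximal cyclic subsets. The first part of the last proposition then shows that $X$ is not locally contractible. Since $X$ is again finite-dimensional by Corollary~\ref{cor_dim}, the same characterization forces $X$ not to be an ANR, which is exactly the contrapositive of the desired implication.

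The argument is short because the substantial work has already been carried out in the last proposition and in Corollary~\ref{cor_dim}. The only point requiring a moment's care is the uniform non-contractibility of closed surfaces: it guarantees that the hypothesis \emph{``infinitely many non-contractible maximal cyclic subsets''} of the proposition is met automatically as soon as there are infinitely many maximal cyclic subsets at all. This is precisely where the defining condition of a generalized cactoid enters, and it is the only subtlety I anticipate.
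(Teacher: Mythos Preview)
Your proposal is correct and follows exactly the route the paper takes: combine Corollary~\ref{cor_dim} with the finite-dimensional ANR characterization, then invoke the two parts of the preceding proposition, using that closed surfaces are manifolds (hence locally strongly contractible) for $2)\Rightarrow 1)$ and that closed surfaces are non-contractible for the contrapositive of $1)\Rightarrow 2)$. The only minor redundancy is that ANRs are locally contractible without any dimension hypothesis, so Corollary~\ref{cor_dim} is strictly needed only for $2)\Rightarrow 1)$.
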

\subsection{A Fundamental Group Formula}
In this subsection we present a formula for the fundamental group of a locally simply connected Peano space in terms of its maximal cyclic subsets. For this we first reduce the complexity of the problem: 
\begin{lem}\label{lem_finite}
Let $X$ be a compact length space and $\p{T_k}^{\infty}_{k=1}$ be an enumeration of its maximal cyclic subsets. Then $X$ can be obtained as the limit of compact length spaces having only finitely many maximal cyclic subsets. Further the maximal cyclic subsets of the space with index $n$ are in isometric one-to-one correspondence with $\cp{T_k}^{n}_{k=1}$ for every $n\inN$.
\end{lem}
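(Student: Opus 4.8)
The plan is to keep the first $n$ maximal cyclic subsets untouched and to collapse each of the remaining ones to a point. By part 1) of Lemma \ref{lem_topo} the family $\p{T_k}^{\infty}_{k=1}$ exhausts all maximal cyclic subsets, and we may assume it is infinite, since otherwise $X_n=X$ for large $n$. For fixed $n$ I would let $\sim_n$ be the equivalence relation on $X$ generated by declaring $x\sim_n y$ whenever $x,y$ lie in a common $T_k$ with $k>n$, and set $X_n\coloneqq\faktor{X}{\sim_n}$ with the quotient length metric and quotient map $q_n\colon X\to X_n$. The non-singleton classes are exactly the connected components of $\overline{\,\bigcup_{k>n}T_k\,}$, each a union of collapsed blocks glued along shared cut points.

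The first task is to verify that $X_n$ is a compact length space whose maximal cyclic subsets are precisely the images of $T_1,\dots,T_n$ and that $q_n$ is isometric on each of them. Here the block structure does the work. Since two distinct maximal cyclic subsets meet in at most one point and, by part 6) of Lemma \ref{lem_topo}, every complementary component is attached along a single point, each collapsed block lies in a single complementary component of a kept block $T_j$; hence a geodesic between two points of $T_j$ gains nothing by entering a collapsed region, so $q_n$ preserves distances on $T_j$. The same one-point-intersection property shows that collapsing cannot produce a new Jordan curve, so the collapsed points become cut points and no maximal cyclic subset survives except $T_1,\dots,T_n$; this yields the ``Further'' clause.

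For convergence I would invoke the $\varepsilon$-isometry characterisation of Gromov--Hausdorff convergence. As $q_n$ is $1$-Lipschitz and onto, $X_n$ is a $0$-net and $d_{X_n}\p{q_nx,q_ny}\le d_X(x,y)$, so everything reduces to bounding $dis(q_n)=\sup_{x,y}\bigl(d_X(x,y)-d_{X_n}(q_nx,q_ny)\bigr)$. Lifting a shortest path in $X_n$ back to $X$ and filling in each passage through a collapsed class $E$ by a geodesic inside $E$ shows that the distortion is controlled by the sizes of the collapsed classes met along the path, each met at most once since a shortest path visits a collapsed point at most once. Thus the whole estimate is driven by the claim
\begin{align*}
\sup\bigl\{diam(E):E\text{ a component of }\overline{\,\textstyle\bigcup_{k>n}T_k\,}\bigr\}\xrightarrow[n\to\infty]{}0,
\end{align*}
and, granting this together with the total boundedness of $X$, one obtains $\conv{X_n}{X}$.

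Establishing the displayed vanishing is the heart of the matter and the step I expect to be the main obstacle. By part 5) of Lemma \ref{lem_topo} only finitely many blocks exceed any given diameter, so for large $n$ every collapsed block is arbitrarily small; the real difficulty is that a component $E$ is a chain of such blocks and could a priori remain large. I would argue by contradiction: if some $\delta_0>0$ were realised by components at every level, then, using that removing a single block $T_{n+1}$ splits a component into pieces only finitely many of which have diameter $\ge\delta_0$ (part 4) of Lemma \ref{lem_topo} bounds the diameters of the complementary components of $T_{n+1}$), a K\"onig-type argument produces a nested sequence of such components whose intersection is a non-degenerate continuum every point of which is a limit of blocks of unbounded index. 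Containing no Jordan curve, this continuum is a dendrite, and the accumulation of the shrinking blocks along it would force a failure of local connectedness of $X$, contradicting the fact that a compact length space is a Peano space. This local-connectedness input is precisely what excludes the string-of-beads pathologies and is where the compactness of $X$ enters in full force; with the displayed limit in hand the convergence follows.
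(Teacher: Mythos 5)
Your construction is genuinely different from the paper's: you form the full infinite quotient $X_n=\faktor{X}{\sim_n}$ in one step and try to control $dis(q_n)$ directly, whereas the paper only ever collapses finitely many cyclic elements at a time (the spaces $X_{n,k}$), passes to Gromov--Hausdorff limits in $k$, and finishes with a diagonal sequence, so that no distortion of an infinite quotient is ever estimated explicitly. The structural part of your argument (the images of $T_1,\dots,T_n$ are isometric copies and exhaust the maximal cyclic subsets of the quotient, via the one-point attachment of complementary components) is sound and parallel to the paper's. The problem is the quantitative step.

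The reduction of $dis(q_n)$ to the displayed statement $\sup_E diam(E)\to 0$ is not valid. As you yourself note, an efficient chain from $q_n x$ to $q_n y$ may pass through many distinct collapsed classes $E_1,\dots,E_r$, each once; the saving is then bounded only by $\sum_{i=1}^{r}diam\p{E_i}$, not by $\max_i diam\p{E_i}$, and $r$ is unbounded, so smallness of each individual class gives no control. (Collapsing $N$ disjoint subarcs of length $\tfrac{1}{2N}$ in a unit interval halves its diameter even though every collapsed set has diameter $\tfrac{1}{2N}$.) What actually rescues the estimate is the cut-point structure you already invoke elsewhere: by part 6) of Lemma \ref{lem_topo} a chain enters and leaves a traversed class $E_i$ through points $a_i,b_i\in E_i$ that are cut points separating $x$ from $y$, so in a geodesic space $d(x,y)=d\p{x,a_1}+\sum_i d\p{a_i,b_i}+\sum_i d\p{b_i,a_{i+1}}+d\p{b_r,y}$. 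Hence the total saving $\sum_i d\p{a_i,b_i}$ is the tail, beyond index $n$, of a series of ``widths'' of the cyclic elements in the cyclic chain from $x$ to $y$, a series dominated by $d(x,y)$ and therefore with vanishing tails; this gives pointwise convergence of the quotient metrics to $d_X$, and uniformity then follows from monotonicity in $n$ and equicontinuity. Your sup-statement is neither sufficient for the reason above nor established by your K\"onig/dendrite sketch, which you acknowledge is incomplete. I would either switch to the width-series estimate just described or adopt the paper's finite-collapse-plus-diagonal scheme, which is designed to sidestep exactly this point. (A minor further remark: the classes of the relation generated by membership in a common $T_k$, $k>n$, are the components of $\bigcup_{k>n}T_k$, not of its closure; the metric quotient then merges non-closed classes with their limit points on its own.)
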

\begin{proof}
Let $n,k\inN$. We define an equivalence relation $\sim$ on $X$ as follows: $x\sim y$ if and only if $x$ and $y$ lie in the same connected component of $\cup_{m=n+1}^{n+k}T_m$. Further we define $X_{n,k}$ as the metric quotient $\faktor{X}{\sim}$ and denote the corresponding projection map by $p_{n,k}$. Then $p_{n,k}$ is surjective and 1-lipschitz. Hence we may assume that there is a space $X_n\in\mathcal{M}$ and a map $p_n\colon X\to X_n$ such that $\conv{X_{n,k}}{X_n}$ and $\conv{p_{n,k}}{p_n}$ uniformly.\\
Since the map $p_{n,k}$ is monotone and its restriction to $T_m$ is distance preserving for every $m\in\cp{1,\ldots,n}$, the same applies to $p_n$ \comp{p. 174}{Why42}.\\ Let $T$ be a maximal cyclic subset of $X_n$. Then we find some $k\inN$ such that $T\subset p_n\p{T_k}$ \comp{p. 145-146}{Why42}. Because $p_n$ is constant on $T_m$ for every $m\inN$ with $m>n$, we have $k\le n$. Hence $p_n\p{T_k}$ is cyclicly connected and we derive that the inclusion is an equality.\\ Due to the fact that for every non-degenerate cyclicly connected subset of $X_n$ there is a maximal cyclic subset containing it \comp{p. 79}{Why42}, we derive that $\cp{p_n\p{T_k}}^n_{k=1}$ is the set of maximal cyclic subsets of $X_n$ and has cardinality $n$.\\ Since $p_n$ is surjective and 1-lipschitz, we may assume that there is $\tilde{X}\in\mathcal{M}$ with $\conv{X_n}{\tilde{X}}$. Choosing a diagonal sequence we may assume that $\conv{X_{n,n}}{\tilde{X}}$ and $\p{p_{n,n}}_{n\inN}$ is uniformly convergent. Finally the limit map is an isometry between $X$ and $\tilde{X}$.
\end{proof}
\begin{lem}
Let $X$ be a compact length space having only finitely many maximal cyclic subsets. Then $X$ can be obtained as the limit of spaces in $\mathcal{W}$. Further the maximal cyclic subsets of the spaces of the sequence are in isometric one-to-one correspondence with those of $X$.
\end{lem}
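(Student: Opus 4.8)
The plan is to realize $X$ as a metric wedge of its finitely many maximal cyclic subsets together with a family of attached trees, and then to approximate only the tree part by finite trees while leaving the cyclic pieces untouched. Let $T_1,\ldots,T_m$ be the maximal cyclic subsets of $X$; each is a non-degenerate cyclicly connected compact length space, and these are the only pieces that can contribute cyclic elements. Set $T\coloneqq T_1\cup\ldots\cup T_m$. Since no maximal cyclic subset of $X$ meets $X\setminus T$, no component of $X\setminus T$ contains a Jordan curve, so by the characterization of trees recalled in the preliminaries the closure of every such component is a compact metric tree. Two distinct blocks meet in at most one point, and every point at which a block meets the remainder of $X$ is a cut point of $X$; hence the incidence pattern of the blocks and the attached tree pieces is acyclic. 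Using that a cut point $c$ separating $x$ and $y$ forces $d(x,y)=d(x,c)+d(c,y)$ in a length space, I would conclude that the metric of $X$ is the additive wedge metric along the chains of cut points, so that $X$ is isometric to the metric wedge of the blocks $T_1,\ldots,T_m$ and the tree pieces, assembled successively at single cut points.

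Next I would reduce to finitely many pieces. Because $X$ has only finitely many blocks, Lemma~\ref{lem_topo} still yields that the components of $X\setminus T$ have diameters tending to $0$, so for each $\varepsilon>0$ only finitely many have diameter at least $\varepsilon$. Collapsing every complementary tree piece of diameter less than $\varepsilon$ to a point produces a compact length space $X_\varepsilon$; the projection is $1$-Lipschitz and moves points less than $\varepsilon$, hence is a $2\varepsilon$-isometry, so $X_\varepsilon\to X$. The blocks $T_1,\ldots,T_m$ are untouched by this collapse, and since any two of them still share at most one point they remain the maximal cyclic subsets of $X_\varepsilon$. Thus $X_\varepsilon$ is a successive metric wedge sum of the blocks and finitely many compact metric trees. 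Approximating each of these finitely many trees by finite metric trees sharing the same attaching points (using the proposition that every compact metric tree is a limit of finite metric trees) yields spaces in $\mathcal{W}$ converging to $X_\varepsilon$. A diagonal argument over $\varepsilon$ then gives a sequence in $\mathcal{W}$ converging to $X$. Since a non-degenerate tree is never cyclicly connected, wedging in finite trees introduces no new Jordan curves, so the cyclicly connected pieces of every approximant are exactly the isometric copies of $T_1,\ldots,T_m$; these are precisely their maximal cyclic subsets, which gives the asserted one-to-one isometric correspondence.

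The main obstacle will be making the structural step rigorous: verifying that the blocks and the complementary tree pieces are glued along cut points in an acyclic pattern, and that the length metric of $X$ coincides with the wedge metric built from these cut-point chains, including the case of a tree piece whose boundary meets several blocks. Some care is also needed so that the collapse of the small pieces and the finite-tree approximation are carried out simultaneously and compatibly at the finitely many attaching points, so that the resulting spaces genuinely lie in $\mathcal{W}$ and keep the blocks isometric to those of $X$.
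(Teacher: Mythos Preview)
Your strategy coincides with the paper's: trim away the small tree pieces so that only finitely many remain, recognise the resulting space as a successive metric wedge of the blocks $T_1,\dots,T_m$ with finitely many compact trees, replace each tree by finite metric trees (carrying the attaching points along via almost-isometries), and pass to a diagonal sequence. The only noteworthy differences are implementation details. First, the paper deletes the small pieces rather than collapsing them, and it does so block by block: for each single $T_i$ it removes the components of $X\setminus T_i$ of diameter $<\varepsilon/n$, where $\varepsilon$ is chosen as the minimum diameter among the $T_i$ so that no other block is accidentally removed. Since each such component has one-point boundary (Lemma~\ref{lem_topo}(6)), deletion leaves the ambient metric on the remainder unchanged and the inclusion is an $\varepsilon/n$-isometry on the nose. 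Your collapse of components of $X\setminus(\cup_i T_i)$ also works, but your ``$2\varepsilon$-isometry'' claim is not quite right: a connecting tree piece may touch several $T_i$, and a geodesic can cross up to $m{+}1$ collapsed pieces, so the distortion bound is of order $m\varepsilon$ rather than $2\varepsilon$; this still tends to $0$, so the conclusion $X_\varepsilon\to X$ survives. Second, when approximating a tree $D$ by finite trees $D_k$ you cannot literally keep ``the same attaching points''; the paper chooses $\tfrac1k$-isometries $f_k\colon D\to D_k$ and attaches at the images $f_k(p_i)$, which is what you should do as well.
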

\begin{proof}
Let $\varepsilon$ be the minimum of the diameters of the maximal cyclic subsets of $X$. If $n\inN$ and $T$ is a maximal cyclic subset of $X$, we denote the set of connected components of $X\setminus T$ having diameter less than $\nicefrac{\varepsilon}{n}$ by $\mathcal{C}_T$. Moreover we set $\mathcal{T}$ as the set of maximal cyclic subsets of $X$ and $X_n\coloneqq X\setminus\p{\cup_{T\in \mathcal{T}}\cup_{C\in\mathcal{C}_T}C}$.\\
The space $X_n$ is a compact length space which has the same maximal cyclic subsets as $X$. Further it is an $\nicefrac{\varepsilon}{n}$-net in $X$ and it follows that the inclusion map from $X_n$ to $X$ is an $\nicefrac{\varepsilon}{n}$-isometry. Hence we have $\conv{X_n}{X}$.\\
Moreover $X_n$ is a successive metric wedge sum of its maximal cyclic subsets and finitely many compact metric trees. Let $D$ be such a tree. We denote the wedge points lying in $D$ by $p_1,\ldots,p_N$. There is a sequence $\p{D_k}_{k\inN}$ of finite metric trees converging to $D$. Further we may assume the existence of a sequence $\seq{f}{k}$ such that $f_k$ is a $\nicefrac{1}{k}$-isometry from $D$ to $D_k$. We define $X_{n,k}$ as the successive metric wedge sum created by replacing $D$ with $D_k$ in $X_n$ and the corresponding wedge points with $f_k\p{p_1},\ldots,f_k\p{p_N}$.\\
This new space is again a compact length space such that its maximal cyclic subsets are in isometric one-to-one correspondence with those of $X$. In particular we may assume that $X_{n,k}\in\mathcal{W}$. Otherwise we repeat the argument above until every tree is replaced by a finite one. Moreover we have that $\conv{X_{n,k}}{X_n}$ since $f_k$ defines a $\nicefrac{1}{k}$-isometry between $X_n$ and $X_{n,k}$. Choosing a diagonal sequence we may assume that $\conv{X_{n,n}}{X}$.
\end{proof}
\begin{cor}\label{cor_finite}
Let $X$ be a compact length space and $\p{T_k}^{\infty}_{k=1}$ be an enumeration of its maximal cyclic subsets. Then $X$ can be obtained as the limit of spaces in $\mathcal{W}$. Further the maximal cyclic subsets of the space with index $n$ are in isometric one-to-one correspondence with $\cp{T_k}^{n}_{k=1}$ for every $n\inN$. 
\end{cor}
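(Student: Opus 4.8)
The plan is to obtain this corollary by composing the two preceding lemmas and then diagonalizing, since between them they already cover both reduction steps: Lemma \ref{lem_finite} reduces an arbitrary compact length space to spaces with only finitely many maximal cyclic subsets while preserving a prescribed isometric correspondence of the surviving subsets, and the preceding lemma approximates any space with finitely many maximal cyclic subsets by spaces in $\mathcal{W}$. The whole proof should therefore reduce to careful index bookkeeping together with a routine diagonal extraction.

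First I would apply Lemma \ref{lem_finite} to the enumeration $\p{T_k}^{\infty}_{k=1}$, producing a sequence $\seq{X}{n}$ of compact length spaces with $\conv{X_n}{X}$ such that each $X_n$ has only finitely many maximal cyclic subsets, and these are in isometric one-to-one correspondence with $\cp{T_k}^{n}_{k=1}$. Next I would feed each $X_n$ into the preceding lemma: as $X_n$ has finitely many maximal cyclic subsets, there is a sequence $\p{X_{n,k}}_{k\inN}$ in $\mathcal{W}$ with $\conv{X_{n,k}}{X_n}$ as $k\to\infty$, whose maximal cyclic subsets are in isometric one-to-one correspondence with those of $X_n$. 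Composing the two correspondences shows that the maximal cyclic subsets of $X_{n,k}$ are in isometric one-to-one correspondence with $\cp{T_k}^{n}_{k=1}$, and crucially this correspondence is independent of $k$.

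Finally I would extract a diagonal sequence. Since $\conv{X_{n,k}}{X_n}$ as $k\to\infty$ for every fixed $n$ and $\conv{X_n}{X}$, I would choose for each $n$ an index $k(n)$ with $d_{GH}\p{X_{n,k(n)},X_n}<\nicefrac{1}{n}$; the triangle inequality for the Gromov-Hausdorff distance then gives $\conv{X_{n,k(n)}}{X}$, which can equally be phrased through the $\varepsilon$-isometry criterion of the preliminaries. Declaring the space with index $n$ to be $X_{n,k(n)}\in\mathcal{W}$ finishes the construction, because its maximal cyclic subsets already correspond isometrically to $\cp{T_k}^{n}_{k=1}$. The one point to watch is that the diagonal choice must keep the space of index $n$ matched to the first $n$ of the $T_k$; but since the correspondence at level $\p{n,k}$ does not depend on $k$, this is automatic, so I do not expect a genuine obstacle here—the difficulty of the section was already absorbed by the two lemmas.
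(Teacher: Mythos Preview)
Your proposal is correct and is exactly the argument the paper intends: the corollary is stated immediately after Lemma~\ref{lem_finite} and the lemma on spaces with finitely many maximal cyclic subsets, and is meant to follow by composing the two and diagonalizing, just as you describe. The paper gives no separate proof, so your write-up simply makes explicit the intended two-step reduction and diagonal extraction.
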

\noindent 
Every compact interval or point can be obtained as the limit of spaces in $\mathcal{S}(0)$. Hence we can proceed as in the second half of the last proof to get rid of the finite metric trees first and then the cut points lying in more than one maximal cyclic subset. From this we derive the following statement for later purposes:
\begin{cor}\label{cor_spher}
Let $X$ be geodesic generalized cactoid and $\p{T_k}^{\infty}_{k=1}$ be an enumeration of its maximal cyclic subsets. Then $X$ can be obtained as the limit of spaces in $\mathcal{W}_0$. Further the maximal cyclic subsets of the space with index $n$ are in isometric one-to-one correspondence with $\cp{T_k}^{n}_{k=1}$ and finitely many spaces in $\mathcal{S}(0)$ for every $n\inN$. 
\end{cor}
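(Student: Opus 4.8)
The plan is to bootstrap from Corollary \ref{cor_finite} and to upgrade the approximating spaces from $\mathcal{W}$ to $\mathcal{W}_0$. By Corollary \ref{cor_finite} the space $X$ is a limit of spaces $\seq{Y}{n}$ in $\mathcal{W}$ whose maximal cyclic subsets are isometric copies of $\cp{T_k}^{n}_{k=1}$. Since $X$ is a generalized cactoid, each $T_k$ is a closed length surface, so every $Y_n$ is a successive metric wedge sum of the surfaces $T_1,\ldots,T_n$ together with finitely many finite metric trees. It therefore suffices to realize each such $Y_n$ as a limit of spaces in $\mathcal{W}_0$ whose maximal cyclic subsets are isometric to $T_1,\ldots,T_n$ and to finitely many spaces in $\mathcal{S}(0)$, and then to pass to a diagonal sequence. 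I would carry this out in the two phases indicated before the statement.

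First I would get rid of the finite metric trees. Because every compact interval is a limit of spaces in $\mathcal{S}(0)$, realized by a sequence of thin spheres collapsing onto the interval, and a finite metric tree is a successive metric wedge sum of finitely many such intervals, I would replace each edge of each tree by a thin sphere converging to it. Following the device used in the second half of the previous proof, I would replace one piece at a time by its approximant, transport the wedge points lying on it by the corresponding almost-isometry, and diagonalize. In the limit this turns $Y_n$ into a successive metric wedge sum of the surfaces $T_1,\ldots,T_n$ and finitely many 2-spheres. However, the wedge points created at tree vertices of valence at least three, and at points where several pieces are attached, may still be shared by more than two surfaces.

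Next I would enforce the defining condition of $\mathcal{W}_0$ at these points. There are only finitely many points $q$ at which at least three surfaces meet, since only finitely many surfaces are involved. At each such $q$ I would insert a small sphere $\sigma_q\in\mathcal{S}(0)$ and re-attach the incoming pieces to pairwise distinct points of $\sigma_q$. Every resulting wedge point is then shared by exactly two surfaces, namely $\sigma_q$ and a single incoming piece, and since $\sigma_q$ and any incoming piece meet in a single point, $\sigma_q$ together with that piece is not cyclicly connected; hence the surfaces $T_k$ and the inserted spheres remain the maximal cyclic subsets. Letting $\conv{diam\p{\sigma_q}}{0}$ makes the modification an almost-isometry onto the previous space, so the construction converges back to it.

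The resulting spaces lie in $\mathcal{W}_0$, their maximal cyclic subsets are isometric to $T_1,\ldots,T_n$ together with the finitely many 2-spheres produced in the two phases, and a final diagonal argument yields a single sequence in $\mathcal{W}_0$ converging to $X$ with the asserted correspondence of maximal cyclic subsets. I expect the main obstacle to be the bookkeeping in the second phase: one must check that re-routing the attachments through the inserted spheres keeps all distortions small, does not fuse any two of the surfaces $T_k$ into a larger cyclicly connected set, and creates no maximal cyclic subset other than a 2-sphere, all while the isometric copies of the $T_k$ are carried through unchanged.
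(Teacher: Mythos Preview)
Your proposal is correct and follows essentially the same route as the paper: the paper's proof is the sentence preceding the corollary, which says to use that every compact interval or point is a limit of spaces in $\mathcal{S}(0)$ and to repeat the replacement device from the second half of the proof of the preceding lemma, first to eliminate the finite metric trees and then to resolve the cut points lying in more than one maximal cyclic subset. Your two phases---replacing tree edges by thin spheres and then inserting a small sphere at each wedge point shared by at least three surfaces---are exactly this, spelled out in detail.
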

\noindent
Now we are able to state the formula:
\begin{prop}\label{prop_fund}
Let $X$ be a locally simply connected Peano space and  $\p{T_n}^{\infty}_{n=1}$ be an enumeration of its maximal cyclic subsets. Then $\pi_1(X)$ is isomorphic to $\pi_1\p{T_1}\ast\ldots\ast\pi_1\p{T_n}$ for almost all $n\inN$.
\end{prop}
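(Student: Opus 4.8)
The plan is to establish two separate facts: that the free product on the right stabilises as $n\to\infty$, and that its stable value is $\pi_1(X)$. Throughout I may replace $X$ by a homeomorphic compact geodesic space (Theorem \ref{trm_Peano}), since both $\pi_1$ and the system of maximal cyclic subsets are topological; in a geodesic metric the canonical retractions $r_k\colon X\to T_k$ from Lemma \ref{lem_topo} are $1$-Lipschitz, because a cut point $p=\partial C$ separates a component $C$ from the rest and hence distances add through $p$. First I would show that only finitely many $T_k$ fail to be simply connected. If not, then since the diameters of the $T_k$ tend to $0$ by Lemma \ref{lem_topo}, a subsequence of non-simply-connected maximal cyclic subsets shrinks to a point $p\in X$, and every neighborhood of $p$ eventually contains such a $T_k$ entirely. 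Choosing a simply connected neighborhood $U$ of $p$ and using that $r_k$ retracts $U$ onto $T_k$, the induced map $\pi_1\p{T_k}\to\pi_1(U)$ is injective; triviality of $\pi_1(U)$ then forces $\pi_1\p{T_k}$ trivial, a contradiction. Hence there is $N\inN$ with $\pi_1\p{T_k}$ trivial for all $k>N$, so $G\coloneqq\pi_1\p{T_1}\ast\ldots\ast\pi_1\p{T_N}$ equals $\pi_1\p{T_1}\ast\ldots\ast\pi_1\p{T_n}$ for every $n\ge N$.

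Next I would fix a uniform scale. By compactness and local simple connectivity there is $\delta>0$ such that every loop in $X$ of diameter less than $\delta$ is null-homotopic in $X$. Applying $r_k$ as above (a loop lying in $T_k$ that contracts in $X$ contracts in $T_k$, since $r_k$ fixes $T_k$ pointwise) shows that no $T_k$ contains a non-contractible loop of diameter less than $\delta$, and this bound is independent of $k$. The same retraction principle yields that each $T_k$, being a Peano retract of the locally simply connected space $X$, is again locally simply connected, which is what makes Proposition \ref{prop_SVK} applicable below.

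Finally I would pass through finite approximations. By Corollary \ref{cor_finite} there are spaces $X_n\in\mathcal{W}$ with $\conv{X_n}{X}$ whose maximal cyclic subsets are isometric to $T_1,\ldots,T_n$, the remaining pieces being finite metric trees. Writing each $X_n$ as a successive metric wedge sum and applying Proposition \ref{prop_SVK} repeatedly (the trees contributing trivially) gives $\pi_1\p{X_n}\cong\pi_1\p{T_1}\ast\ldots\ast\pi_1\p{T_n}=G$ for all $n\ge N$. It then remains to verify that $\seq{X}{n}$ is uniformly semi-locally simply connected at the scale $\delta$ and to invoke the stability theorem \comp{p. 3588}{SW01}, with $X$ semi-locally simply connected, to obtain $\pi_1(X)\cong\pi_1\p{X_n}\cong G$ for almost all $n$. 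Together with the first step this is exactly the claim.

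The hard part will be this uniform semi-local simple connectivity. It does not suffice to retract a small loop $\gamma\subset X_n$ onto each $T_k$, because the projections $\pi_1\p{X_n}\to\pi_1\p{T_k}$ do not jointly detect triviality in a free product: they annihilate commutators between distinct factors. Instead one must exploit the tree-of-pieces structure of $X_n$: collapsing all maximal cyclic subsets to points gives a simply connected space, so $\gamma$ is homotopic to a concatenation of excursions, each a loop lying in a single $T_k$ and assembled from sub-arcs of $\gamma$ along the cut-point tree. The decisive estimate is that every such excursion again has diameter controlled by $diam\p{\gamma}$, so that for $diam\p{\gamma}<\delta$ each excursion is a loop of diameter less than $\delta$ in some $T_k$ and therefore contractible there by the previous step, whence $[\gamma]$ is trivial in $\pi_1\p{X_n}$. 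Carrying out this diameter bookkeeping for excursions that may cross several cut points repeatedly is the genuine technical obstacle of the proof.
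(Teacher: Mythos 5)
Your proposal follows the paper's proof essentially verbatim: pass to a geodesic model via Theorem \ref{trm_Peano}, approximate by the wedge sums of Corollary \ref{cor_finite}, compute $\pi_1\p{X_n}$ with Proposition \ref{prop_SVK}, and transfer the answer to $X$ via uniform semi-local simple connectivity and the stability theorem of \cite{SW01}. The only differences are cosmetic: your opening stabilization step is already implied by the final conclusion, and the excursion decomposition you flag as the ``hard part'' is precisely what the paper compresses into the single sentence that Proposition \ref{prop_SVK} yields uniform semi-local simple connectivity.
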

\begin{proof}
Since $X$ is locally simply connected, the same applies to its maximal cyclic subsets. Moreover there is some $\varepsilon>0$ such that every loop in $X$ of diameter less than $\varepsilon$ lies in some simply connected subset of $X$. If $T$ is a maximal cyclic subset of $X$, then also every loop in $T$ of diameter less than $\varepsilon$ lies in some simply connected subset of $T$.\\
By Theorem \ref{trm_Peano} we may assume that $X$ is geodesic. Let $\seq{X}{n}$ be a sequence as in Corollary \ref{cor_finite}. An application of Proposition \ref{prop_SVK} yields that the sequence is uniformly semi-locally simply connected. Hence $\pi_1(X)$ is isomorphic to $\pi_1\p{X_n}$ for almost all $n\inN$. From the same proposition we also derive that $\pi_1\p{X_n}$ is isomorphic to $\pi_1\p{T_1}\ast\ldots\ast\pi_1\p{T_n}$ for every $n\inN$. This closes the proof. 
\end{proof}
\subsection{Cactoids}
As it will turn out limits of spaces in $\mathcal{S}(c)$ locally look like cactoids (see Corollary \ref{cor_loc_cact}). Hence we are interested in the topology of cactoids. Again we first reduce the complexity of the problem:
\begin{lem}\label{lem_round}
Let $X\in\mathcal{G}(0)$. Then $X$ is homeomorphic to a space that can be obtained as the limit of compact length spaces whose maximal cyclic subsets are isometric to round 2-spheres.
\end{lem}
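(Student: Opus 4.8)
The plan is to replace every maximal cyclic subset of $X$ by a round $2$-sphere while keeping the combinatorial gluing pattern fixed, and then to realise the resulting cactoid as a limit of finite round cactoids via Corollary \ref{cor_finite}. First I would note that a closed surface has vanishing connectivity number exactly when it is a $2$-sphere, so for $X\in\mathcal{G}(0)$ every maximal cyclic subset is a length $2$-sphere, in particular homeomorphic to a round $2$-sphere. Let $\p{T_k}^{\infty}_{k=1}$ enumerate the maximal cyclic subsets; this is possible by Lemma \ref{lem_topo}, which moreover tells us that their diameters tend to $0$, that each component of a complement $X\setminus T_k$ meets $T_k$ in a single point, and that the canonical retraction of $X$ onto each $T_k$ is continuous.

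Next I would build a cactoid $Y$ homeomorphic to $X$ out of round pieces. For each $k$ I would fix a round $2$-sphere $R_k$ with $diam\p{R_k}=diam\p{T_k}$ together with a homeomorphism $h_k\colon T_k\to R_k$. The countably many cut points of $X$ lying on $T_k$ — the points at which the subtrees of spheres are attached — are carried by $h_k$ to points of $R_k$, and I would glue the spheres $R_k$ at the corresponding image points so as to reproduce the attaching pattern of $X$ exactly, equipping the result with the induced length metric. The maps $h_k$ then assemble into a bijection $h\colon X\to Y$ that respects the decomposition into maximal cyclic subsets and complementary components.

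It then remains to verify that $Y$ is a compact geodesic length space, that $h$ is a homeomorphism, and that the maximal cyclic subsets of $Y$ are precisely the round spheres $R_k$. Compactness and total boundedness would follow from $diam\p{R_k}=diam\p{T_k}\to 0$ together with the decay of the complementary components in Lemma \ref{lem_topo}; continuity of $h$ and of $h^{-1}$ I would extract from the continuity of the retractions in Lemma \ref{lem_topo}, which is exactly the data governing how the pieces sit inside the tree. That each $R_k$ is a maximal cyclic subset of $Y$ is immediate, since $R_k$ is attached to the rest of $Y$ only at cut points and hence no Jordan curve of $Y$ can leave it. Applying Corollary \ref{cor_finite} to $Y$ finally produces a sequence $Y_n\in\mathcal{W}$ with $\conv{Y_n}{Y}$ whose maximal cyclic subsets are in isometric one-to-one correspondence with $R_1,\ldots,R_n$, hence isometric to round $2$-spheres; this exhibits $Y$, and with it the homeomorphic copy $X$, as the desired limit.

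The main obstacle is the third step. The subtle point is that the attaching points may accumulate on a single sphere, so the gluing metric on the possibly infinite tree of round spheres has to be set up with enough care that $Y$ is genuinely compact and geodesic and that $h$ is a true homeomorphism rather than a mere combinatorial matching. I expect that the uniform control furnished by Lemma \ref{lem_topo} — the decay of the diameters and the continuity of the retractions — is precisely what is needed to push the verification through in the presence of infinitely many spheres.
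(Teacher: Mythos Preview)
Your plan differs from the paper's. You want to build the round cactoid $Y$ in one stroke and then invoke Corollary~\ref{cor_finite}; the paper instead produces $Y$ as a Gromov--Hausdorff limit. It replaces the spheres one at a time: starting from $X$, it swaps $T_1$ for a round sphere $S_1$ of diameter $\tfrac{1}{2}$ while keeping the closures of the components of $X\setminus T_1$ (hence all tree parts and all remaining $T_m$) unchanged, obtaining $Y_1$; then swaps $T_2$ in $Y_1$ for a round $S_2$ of diameter $\tfrac{1}{4}$; and so on. To each $Y_n$ it applies the collapsing quotient from the proof of Lemma~\ref{lem_finite}, yielding spaces $\tilde Y_n$ whose maximal cyclic subsets are exactly the round $S_1,\dots,S_n$. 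The decisive technical choice is the diameter $2^{-n}$ rather than $diam\p{T_n}$: the comparison map $\tilde Y_{n+1}\to\tilde Y_n$ is $1$-Lipschitz with distortion at most $2^{-n}$, so the $\tilde Y_n$ form a Gromov--Hausdorff Cauchy sequence, and the uniformly equicontinuous compositions $g_n\colon X\to\tilde Y_n$ pass to a bijection onto the limit $Y$. The approximating sequence is $(\tilde Y_n)$ itself; Corollary~\ref{cor_finite} is never invoked. This sidesteps exactly the obstacle you flag --- one never has to make sense of an infinite simultaneous gluing or verify directly that the result is a compact length space.

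Beyond that acknowledged difficulty, your outline has a concrete oversight: a space in $\mathcal G(0)$ may contain points lying in no maximal cyclic subset at all (genuine metric-tree pieces), so gluing only the spheres $R_k$ cannot produce a bijection $h\colon X\to Y$, and the $h_k$ alone do not assemble into a map defined on all of $X$. The paper's one-at-a-time replacement carries the complement of each $T_k$ along intact and so handles this automatically. If you want to salvage the direct route you must glue the $R_k$ together with the closures of the components of $X\setminus\bigcup_kT_k$ and then carry out the compactness and continuity checks you sketch; this is feasible, but the limit argument with summable diameters is what the paper uses to avoid precisely these verifications.
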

\begin{proof}
First we may assume that there are infinitely many maximal cyclic subsets in $X$. Let $\p{T_n}^{\infty}_{n=1}$ be an enumeration of them. There is a homeomorphism $f_n$ from $T_n$ to the round 2-sphere of diameter $\nicefrac{1}{2^n}$. We denote this 2-sphere by $S_n$.\\
The following condition naturally induces an equivalence relation $\sim$ on the  disjoint union of the closures of the connected components of $X\setminus T_1$ and $S_1$: $x\sim y$ if $f_1(x)=y$. We equip the disjoint union with its induced length metric and denote the corresponding metric quotient by $Y_1$. Analogously we construct the space $Y_2$ using the disjoint union of the closures of the connected components of $Y_1\setminus T_2$ and $S_2$. We continue this construction and derive a sequence $\seq{Y}{n}$ of metric spaces.\\
We note that there is an induced enumeration of the maximal cyclic subsets of $Y_n$. If $k\inN$, we define the metric space $Y_{n,k}$ in the same way for $Y_n$ as $X_{n,k}$ is defined for $X$ in the proof of Lemma \ref{lem_finite}.\\
The maps $f_1,\ldots,f_n$ naturally induce a homeomorphism from $X$ to $Y_n$. We denote the composition of this homeomorphism with the projection map from $Y_n$ to $Y_{n,k}$ by $g_{n,k}$. Moreover we find a map $p_{n,k}\colon Y_{n+1,k}\to Y_{n,k+1}$ such that the diagram together with the maps $g_{n+1,k}$ and $g_{n,k+1}$ commutes.  This map is 1-lipschitz and has a distortion less than $\nicefrac{1}{2^n}$. Furthermore the sequence $\p{g_{n,k}}_{k\inN}$ is uniformly equicontinuous. We may assume that there is a space $\tilde{Y}_{n}$ such that $\conv{Y_{n,k}}{\tilde{Y}_n}$. In addition we may assume the existence of maps $p_n\colon \tilde{Y}_{n+1}\to \tilde{Y}_{n}$ and $g_n\colon X\to\tilde{Y}_n$ such that $\conv{p_{n,k}}{p_n}$ and $\conv{g_{n,k}}{g_n}$ uniformly \comp{p. 402}{Pet16}.\\
Using the proof of Lemma \ref{lem_finite} we see that every maximal cyclic subset of $\tilde{Y}_n$ is isometric to a round 2-sphere. Further the diagram consisting of $p_n$, $g_n$ and $g_{n+1}$ commutes and $p_n$ is a 1-lipschitz map that has a distortion less or equal to $\nicefrac{1}{2^n}$. If we set $\varepsilon_k\coloneqq\sum^{\infty}_{m=k}\nicefrac{1}{2^m}$, then $p_k\circ\ldots\circ p_{n-1}$ is an $\varepsilon_k$-isometry between $\tilde{Y}_n$ and $\tilde{Y}_k$ for every $n\inN$ with $n>k$. Hence $\seq{\tilde{Y}}{n}$ is convergent \comp{p. 399}{Pet16} and we denote its limit space by $Y$. Moreover $\seq{g}{n}$ is uniformly equicontinuous and we may assume the existence of a map $g\colon X\to Y$ such that $\conv{g_n}{g}$ uniformly. Finally $g$ is bijective and hence a homeomorphism.  
\end{proof} 
\noindent
\begin{cor}\label{cor_simply}
Cactoids are locally simply connected and simply connected.
\end{cor}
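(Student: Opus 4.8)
The plan is to separate the local and the global assertion and to obtain the latter from the former together with Proposition~\ref{prop_fund}. Since both being simply connected and being locally simply connected are preserved by homeomorphisms, I would first use Theorem~\ref{trm_Peano} to assume that the cactoid $X$ is a geodesic compact length space. All maximal cyclic subsets of a cactoid are $2$-spheres, so their connectivity numbers vanish and $X\in\mathcal{G}(0)$; Lemma~\ref{lem_round} then lets me replace $X$ by a homeomorphic space all of whose maximal cyclic subsets are round $2$-spheres. The purpose of this reduction is uniform geometric control: a metric ball meets each such sphere in a spherical cap, i.e.\ an open disc, and the size of the cap is governed by the diameter of the sphere.

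For local simple connectivity I would fix $p\in X$ and $\varepsilon>0$ and build a simply connected open neighborhood $U$ of $p$ of diameter less than $2\varepsilon$, following the neighborhood construction used in the proof of local strong contractibility above. By parts~4) and~5) of Lemma~\ref{lem_topo} only finitely many maximal cyclic subsets and only finitely many of the relevant complementary components have diameter at least $\varepsilon$. Inside each of the finitely many large spheres through $p$ I take a small cap neighborhood of $p$, and to these I adjoin all the remaining pieces accumulating at $p$, namely the small spheres, the small complementary components and the small tree parts, assembled as the sets $U_k$ are in that proof. The resulting $U$ is an open neighborhood of $p$ by Lemma~\ref{lem_topo}, and collapsing the caps and the trees exhibits $U$, up to deformation retraction, as a wedge at $p$ of countably many cactoids of diameter tending to $0$.

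The main obstacle is to show that this shrinking wedge is simply connected; here the argument genuinely leaves the contractible setting, since a cactoid with infinitely many spheres fails to be locally contractible by the first part of the proposition on local strong contractibility. Decomposing a loop in $U$ based at $p$ along the preimage of $p$ yields countably many subloops, each contained in a single wedge summand. The finitely many subloops meeting a large sphere are null-homotoped one at a time by repeated use of Proposition~\ref{prop_SVK}. Each of the remaining subloops is null-homotoped inside its own summand, which has small diameter; the real difficulty is to carry out these countably many null-homotopies simultaneously, so that they assemble into a single continuous homotopy. This is where the shrinking of the diameters from Lemma~\ref{lem_topo} and the uniform geometry of the round spheres from Lemma~\ref{lem_round} enter: they force all but finitely many of the partial homotopies to remain within arbitrarily small sets, so that the combined homotopy extends continuously across the accumulation set, on which the loop is constant equal to $p$. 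Concluding that $U$ is simply connected and letting $\varepsilon$ tend to $0$ shows that $X$ is locally simply connected.

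Simple connectivity is then immediate. Being a locally simply connected Peano space, $X$ satisfies Proposition~\ref{prop_fund}, so $\pi_1(X)$ is isomorphic to $\pi_1(T_1)\ast\cdots\ast\pi_1(T_n)$ for almost all $n$, where $(T_k)_{k=1}^{\infty}$ enumerates its maximal cyclic subsets. Each $T_k$ is a $2$-sphere, so $\pi_1(T_k)$ is trivial, whence the free product is trivial and $X$ is simply connected.
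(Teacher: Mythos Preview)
Your argument for local simple connectivity has a circularity. After collapsing caps and trees you are left with a shrinking wedge at $p$ of cactoids, and you then assert that each subloop ``is null-homotoped inside its own summand''. But those summands are themselves cactoids, and at this point you do not yet know that a cactoid is simply connected; that is exactly the global statement you afterwards derive from local simple connectivity via Proposition~\ref{prop_fund}. So the local step already presupposes the global one, and the two together chase their tail. The invocation of Proposition~\ref{prop_SVK} for the finitely many large pieces has the same defect: that proposition assumes the wedge summands are locally simply connected, which for cactoid summands is again the property under discussion. The genuine difficulty you flag---patching countably many null-homotopies continuously---is secondary; before you can patch them you need them to exist, and for that you need simple connectivity of arbitrary cactoids.

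The paper sidesteps this loop by using a Gromov--Hausdorff stability result instead of building neighborhoods by hand. After Lemma~\ref{lem_round} and Corollary~\ref{cor_finite}, the cactoid is (up to homeomorphism) a limit of successive metric wedge sums of round $2$-spheres and finite metric trees. In those approximating spaces every open metric ball is simply connected, and a theorem of Petersen says this property is preserved under Gromov--Hausdorff convergence. That single step yields both local simple connectivity and simple connectivity of the limit at once, without ever needing to know in advance that smaller cactoid pieces are simply connected. Your use of Proposition~\ref{prop_fund} to deduce the global statement from the local one is correct in itself, but it only becomes non-circular once the local statement has been established by an argument that does not already rely on simple connectivity of cactoids.
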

\begin{proof}
In a successive metric wedge sum of round 2-spheres and finite metric trees is every open ball simply connected. A theorem by Petersen implies that this property is stable under Gromov-Hausdorff convergence \comp{p. 501}{Pet93}. Finally the claim follows by Corollary \ref{cor_finite} and Lemma \ref{lem_round}.
\end{proof}
\section{The Limit Spaces}\label{sec}
The goal of this chapter is to show that the first statement of the \hyperlink{Main Theorem}{Main Theorem} implies the second. As a consequence we derive Theorem \ref{trm_topo}. 
\subsection{Controlled Convergence}
First we investigate the limit spaces of sequences with additional topological control.\\
A metric space $X$ is called a \emph{local cactoid} if every point in $X$ has an open neighborhood being homeomorphic to an open subset of a cactoid.
\begin{lem}\label{lem_local}
Let $\seq{X}{n}$ be a sequence in $\mathcal{S}\p{c,\varepsilon}$ and $X\in\mathcal{M}$ with $\conv{X_n}{X}$. Then $X$ is a local cactoid. \end{lem}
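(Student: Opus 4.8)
The plan is to show that every point of $X$ has a neighborhood that embeds as an open subset of a limit of length $2$-spheres, so that \hyperlink{Whyburn's theorem}{Whyburn's theorem} (Theorem \ref{trm_why}) supplies the required cactoid. Fix $p\in X$ and radii $0<R'<R<\varepsilon$, and choose $p_n\in X_n$ with $\conv{p_n}{p}$ (using the common ambient space from the identification convention). The governing observation is that $\bar B\p{p_n,R}$ carries none of the topology of $X_n$: its diameter is at most $2R<2\varepsilon$, so every loop inside it has diameter less than $2\varepsilon$ and is therefore contractible in $X_n$ by definition of $\mathcal{S}\p{c,\varepsilon}$.

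First I would pass from the metric ball to a genuine subsurface. Since $X_n$ is a closed surface, I can choose a compact subsurface with boundary $A_n$ satisfying $B\p{p_n,R'}\subset A_n\subset B\p{p_n,R}$, whose boundary $\partial A_n$ is a disjoint union of Jordan curves. Then every loop in $A_n$ is null-homotopic in $X_n$, and I claim this forces $A_n$ to be \emph{planar}, i.e.\ a $2$-sphere with finitely many open discs removed. Indeed, a handle or a crosscap would yield curves in $A_n$ of odd $\mathbb{Z}_2$-intersection number (a symplectic pair, respectively a one-sided curve meeting itself oddly); as the intersection number is computed locally it is the same when taken in $X_n$, so one of these curves would be non-trivial in $H_1\p{X_n;\mathbb{Z}_2}$ and hence non-contractible in $X_n$ — a contradiction. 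Working with $\mathbb{Z}_2$-coefficients treats the orientable and non-orientable cases uniformly, matching the definition of the connectivity number.

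Next I would cap off. Gluing a small metric disc onto each Jordan curve of $\partial A_n$ turns the planar surface $A_n$ into a length space $\hat X_n$ homeomorphic to the $2$-sphere. Choosing the caps small and arranging that they create no shortcuts between points of $B\p{p_n,R'}$ (whose mutual geodesics stay well inside $A_n$ once $R'$ is small compared to $R$), the ball $B\p{p_n,R'}$ sits isometrically inside $\hat X_n$. The family $\seq{\hat X}{n}$ has uniformly bounded diameter and admits the uniform $\varepsilon$-net bounds inherited from $\seq{X}{n}$, so after passing to a subsequence it converges; by Theorem \ref{trm_why} the limit $\hat X$ is a cactoid. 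Because $B\p{p_n,R'}$ is isometric in $X_n$ and in $\hat X_n$, its Gromov-Hausdorff limit inside $X$, namely the ball $B\p{p,R'}$, is isometric to the corresponding ball of $\hat X$, which is an open subset of the cactoid $\hat X$. As $p$ was arbitrary, $X$ is a local cactoid.

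The main obstacle I anticipate lies in the surgery and convergence bookkeeping rather than in the topology: metric spheres in a general length surface may be wild, so one has to justify the intermediate subsurface $A_n$ and, more importantly, attach the caps so that simultaneously $\hat X_n$ is a length $2$-sphere, $B\p{p_n,R'}$ remains isometrically embedded, and the sequence $\seq{\hat X}{n}$ stays pre-compact with limit ball still matching $B\p{p,R'}$ in $X$. The homological planarity argument is the conceptual heart of the lemma, but controlling these metric modifications uniformly in $n$ is where the real work will be.
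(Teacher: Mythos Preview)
Your overall strategy matches the paper's: build for each $x_n$ a length $2$-sphere in which a small ball around $x_n$ sits isometrically, then apply Theorem~\ref{trm_why} to the limit. The implementations differ. Topologically, the paper does not carve out a compact bordered subsurface; instead it sets $A$ equal to the union of $B_\varepsilon\p{x_n}$ with all discs bounded by Jordan curves lying in that ball (via Epstein's theorem that a null-homotopic Jordan curve in a surface bounds a disc), and argues that $A$ is open and simply connected, hence homeomorphic to the plane. Your $\mathbb{Z}_2$-intersection argument for the planarity of $A_n$ is correct and pleasantly self-contained, but it gives no grip on the complement, and that is exactly what the metric step needs.

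Metrically, the paper collapses rather than caps: $Y_n\coloneqq\faktor{X_n}{A^c}$ is a $2$-sphere and the projection is surjective and $1$-Lipschitz, so $\seq{Y}{n}$ is automatically precompact with $\bar B_{\varepsilon/2}\p{x_n}$ isometrically embedded. This single move dissolves precisely the obstacle you flag at the end. In your construction the intrinsic length metric on $A_n$ need not be comparable to the restricted $X_n$-metric away from the inner ball, so the claim that $\seq{\hat X}{n}$ inherits uniform diameter and $\varepsilon$-net bounds from $\seq{X}{n}$ is not justified as written; the number of boundary circles of $A_n$ is likewise uncontrolled. If you wish to keep your planarity argument, replacing ``glue small discs onto $\partial A_n$'' by ``collapse each component of $X_n\setminus A_n$ to a point'' already yields a $1$-Lipschitz quotient onto a $2$-sphere and repairs the proof without further bookkeeping.
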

\begin{proof}
First we may assume that all surfaces of the sequence are not homeomorphic to the 2-sphere. Otherwise the claim follows by Theorem \ref{trm_why}.\\
Let $x\in X$. Then there is a sequence $\seq{x}{n}$ with $x_n\in X_n$ such that $\conv{x_n}{x}$. Every Jordan curve in $B_{\varepsilon}\p{x_n}$ is contractible in $X_n$ and hence bounds a topological closed disc \comp{p. 85}{Eps66}. We set $A$ as the union of all these discs and $B_{\varepsilon}\p{x_n}$.\\ It follows that $A$ is open and connected. Further every Jordan curve in the ball is contractible in $A$. In particular the same holds for all loops in the ball \comp{p. 626}{LW18}. Since every loop in $A$ admits a finite subdivision into paths lying in the ball or one of the corresponding open discs, an induction yields that every loop in $A$ is homotopic to some loop in the ball. Therefore $A$ is simply connected and we conclude that $A$ is homeomorphic to the plane or the 2-sphere \comp{p. 85}{Eps66}. Because $X_n$ is not homeomorphic to the 2-sphere, the first case applies. We derive that the metric quotient $Y_n\coloneqq\faktor{X_n}{A^c}$ is homeomorphic to the 2-sphere. Especially we have $Y_n\in\mathcal{S}(0)$.\\
Since the natural projection $p_n\colon X_n\to Y_n$ is surjective and 1-lipschitz, we may assume that there is a space $Y\in\mathcal{M}$ and a map $p\colon X\to Y$ such that $\conv{Y_n}{Y}$ and $\conv{p_n}{p}$. From Theorem \ref{trm_why} we get that $Y$ is a cactoid. Further we may assume that the sequences $\p{\bar{B}_{\frac{\varepsilon}{2}}\p{x_n}}_{n\inN}$ and $\p{\bar{B}_{\frac{\varepsilon}{2}}\p{p_n\p{x_n}}}_{n\inN}$ are convergent. Their limits are given by $\bar{B}_{\frac{\varepsilon}{2}}(x)$ and $\bar{B}_{\frac{\varepsilon}{2}}\p{p(x)}$. Because $p_n$ defines an isometry between $\bar{B}_{\frac{\varepsilon}{2}}\p{x_n}$ and $\bar{B}_{\frac{\varepsilon}{2}}\p{p_n\p{x_n}}$, the same applies to $p$ with respect to $\bar{B}_{\frac{\varepsilon}{2}}(x)$ and $\bar{B}_{\frac{\varepsilon}{2}}\p{p(x)}$. In particular this also holds for the corresponding open balls. We finally conclude that $X$ is a local cactoid.
\end{proof}
\begin{cor}\label{cor_fund_cyc}
Let $\seq{X}{n}$ be a sequence in $\mathcal{S}\p{c,\varepsilon}$ and $X\in\mathcal{M}$ with $\conv{X_n}{X}$. Then the following statements apply:
\begin{itemize}
\item[1)] $X$ is locally simply connected.
\item[2)] If $c=0$, then the maximal cyclic subsets of $X$ are simply connected.
\item[3)] If $c>0$, then there is a closed surface $S$ of connectivity number $c$ such that the fundamental group of one maximal cyclic subset of $X$ is isomorphic to $\pi_1(S)$ and all other maximal cyclic subsets are simply connected. Moreover $S$ is orientable if and only if $X_n$ is orientable for infinitely many $n\inN$.
\end{itemize}
\end{cor}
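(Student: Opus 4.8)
The plan is to derive everything from Lemma \ref{lem_local}, which identifies $X$ as a local cactoid, together with the two fundamental-group tools of Section \ref{section_limits}. For part 1) I would use that local simple connectivity is a local and homeomorphism-invariant property: by Lemma \ref{lem_local} each point of $X$ has an open neighborhood homeomorphic to an open subset of a cactoid, and cactoids are locally simply connected by Corollary \ref{cor_simply}; since an open subset of a locally simply connected space is again locally simply connected, so is $X$. Along the way I would record that $X$ is a Peano space (compact, connected because a Gromov-Hausdorff limit of length spaces is a length space, and locally connected since it is locally modelled on cactoids), so that Proposition \ref{prop_fund} applies.

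Next I would pin down the global fundamental group. As $X$ is locally simply connected it is in particular semi-locally simply connected, and the hypothesis $X_n\in\mathcal{S}\p{c,\varepsilon}$ says exactly that $\seq{X}{n}$ is uniformly semi-locally simply connected; hence the stability theorem \comp{p. 3588}{SW01} gives an isomorphism $\pi_1(X)\cong\pi_1\p{X_n}$ for all $n$ beyond some index $N$. Each such $X_n$ is a closed surface of connectivity number $c$, so $\pi_1(X)$ is isomorphic to a surface group of connectivity number $c$.

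For part 2), when $c=0$ the $X_n$ are $2$-spheres, so Whyburn's theorem (Theorem \ref{trm_why}) already makes $X$ a cactoid, whence every maximal cyclic subset is a $2$-sphere and thus simply connected. For part 3), when $c>0$ the group $\pi_1(X)$ is a nontrivial surface group, which by Proposition \ref{prop_free} is not a free product of nontrivial groups. Feeding this into the formula of Proposition \ref{prop_fund}, which gives $\pi_1(X)\cong\pi_1\p{T_1}\ast\ldots\ast\pi_1\p{T_n}$ for all large $n$, I would argue that non-decomposability forces exactly one factor $\pi_1\p{T_{i_0}}$ to be nontrivial and isomorphic to $\pi_1(X)$ while all others vanish; letting $n$ grow shows the distinguished index $i_0$ never changes, so a single maximal cyclic subset carries the full fundamental group and every other one is simply connected. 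Setting $S$ to be the closed surface with $\pi_1(S)\cong\pi_1\p{T_{i_0}}$, which is homeomorphic to $X_n$ for $n\ge N$ and hence of connectivity number $c$, finishes the structural statement.

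For the orientability clause I would use that closed orientable and non-orientable surfaces of the same connectivity number $c$ have non-isomorphic fundamental groups, their abelianizations being $\mathbb{Z}^{c}$ and $\mathbb{Z}^{c-1}\oplus\mathbb{Z}_2$. Since $\pi_1\p{X_n}\cong\pi_1(X)\cong\pi_1(S)$ for every $n\ge N$, all these surfaces share the orientability type of $S$, so $S$ is orientable precisely when infinitely many $X_n$ are. I expect the main obstacle to lie in part 3): making the free-product argument select a single index consistently as $n\to\infty$, and excluding a sequence mixing orientable and non-orientable surfaces, for which the abelianization invariant above is the decisive point.
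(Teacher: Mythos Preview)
Your proposal is correct and follows essentially the same route as the paper: combine Lemma \ref{lem_local} with Corollary \ref{cor_simply} for part 1), invoke the stability theorem together with Proposition \ref{prop_fund} and Proposition \ref{prop_free} for parts 2) and 3), and you supply the details (abelianization, consistency of the distinguished index) that the paper leaves implicit. The worry you flag about a single index being selected consistently is harmless, since the enumeration $\p{T_k}_k$ is fixed and adding further simply connected factors as $n$ grows cannot disturb the unique nontrivial one.
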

\begin{proof}
Combining Corollary \ref{cor_simply} and Lemma \ref{lem_local} we derive the first statement. Since the sequence is uniformly semi-locally simply connected, it follows that $\pi_1(X)$ isomorphic to $\pi_1\p{X_n}$ for almost all $n\inN$. Hence Proposition \ref{prop_free} and Proposition \ref{prop_fund} close the proof.
\end{proof}
\noindent
From the upcoming lemma follows that limits of spaces in $\mathcal{S}\p{c,\varepsilon}$ are generalized cactoids:
\begin{lem}
Let $\seq{X}{n}$ be a sequence in $\mathcal{S}\p{c,\varepsilon}$, $X\in\mathcal{M}$ with $\conv{X_n}{X}$ and $T$ be a maximal cyclic subset of $X$. Then $T$ is a closed surface.
\end{lem}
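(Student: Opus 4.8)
The plan is to show that $T$ is a compact, connected $2$-manifold without boundary, which is precisely a closed surface. Compactness and connectedness come for free: maximal cyclic subsets are closed in the compact space $X$, and cyclic connectivity makes $T$ arcwise connected. The entire content therefore lies in establishing that every point of $T$ has a neighborhood \emph{in} $T$ homeomorphic to an open disc.

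So fix $x\in T$. By Lemma~\ref{lem_local} the space $X$ is a local cactoid, hence there is an open neighborhood $U$ of $x$ together with a homeomorphism $\phi\colon U\to V$ onto an open subset $V$ of a cactoid $C$. First I would record the local structure of $C$: by definition its maximal cyclic subsets are $2$-spheres, two distinct spheres as well as a sphere and the complementary dendritic part can only meet in cut points of $C$, and a simple closed curve meets a cut point at most once, so no Jordan curve crosses from one sphere into another or into the dendritic part. Consequently a point of $C$ lies on a Jordan curve if and only if it lies on a (then unique) sphere, and a point is a $2$-manifold point of $C$ exactly when it is an interior point of the two-dimensional part. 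Since $\phi$ is a homeomorphism, all of these relations transport verbatim to $X$ on $U$.

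The main step, and the main obstacle, is to show that after shrinking $U$ the homeomorphism $\phi$ carries $T\cap U$ onto an open subset of a single sphere $\Sigma\subseteq C$. For the inclusion $\phi(T\cap U)\subseteq\Sigma$ I would use that $T$, being cyclicly connected, has no cut points: this prevents $T$ from straddling a cut point of the local model or reaching into the dendritic part, and any two points of $T$ close to $x$ are joined by a Jordan curve inside $T$, whose image is forced to remain in one sphere by the cut-point analysis above. Pinning down this single sphere $\Sigma$ and transferring the defining \emph{global} Jordan curves of $T$ to the local model is the delicate point, because $T$ is a global object while the cactoid only describes a neighborhood; I expect to resolve it by working with points so close to $x$ that the connecting Jordan curves can be taken inside $U$, exploiting the local simple connectivity from Corollary~\ref{cor_fund_cyc}. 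For the reverse inclusion, that a full $\Sigma$-neighborhood of $\phi(x)$ is attained, I would use that any two nearby points of a $2$-sphere are joined by arbitrarily small Jordan curves; pulling these back produces Jordan curves in $U$ witnessing that the corresponding points of $X$ lie in the same maximal cyclic subset as $x$, hence in $T$.

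Once this is done, $\phi$ identifies a neighborhood of $x$ in $T$ with an open subset of the $2$-sphere $\Sigma$, so $x$ has a disc neighborhood in $T$. As $x\in T$ was arbitrary, $T$ is a $2$-manifold without boundary, and together with compactness and connectedness this shows that $T$ is a closed surface.
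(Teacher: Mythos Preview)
Your overall architecture---use the local cactoid chart and show that $T\cap U$ is carried onto an open piece of a single $2$-sphere---matches the paper's second half. The gap is in the step you yourself flag as ``delicate'': obtaining \emph{small} Jordan curves in $T$ joining nearby points of $T$. Knowing that $T$ has no cut points (from cyclic connectivity) is strictly weaker than what is needed here, namely that $T$ has no \emph{local} cut points. Without this, $T\cap U$ could in principle look like two sphere-pieces wedged at $x$, with the two halves connected only through a part of $T$ lying outside $U$; then any Jordan curve in $T$ through points on different halves must leave $U$, and your cut-point analysis in the model $C$ never gets off the ground. Your proposed fix via Corollary~\ref{cor_fund_cyc} does not close this: local simple connectivity of $X$ says small loops in $X$ contract in $X$, but it gives no control on Jordan curves \emph{within} $T$, and homotopies do not preserve being simple.

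The paper supplies exactly the missing ingredient by an algebraic detour: if $T$ had a local cut point, then Proposition~\ref{prop_HNN} would force $\pi_1(T)\cong G\ast\mathbb{Z}$, while Corollary~\ref{cor_fund_cyc} says $\pi_1(T)$ is the fundamental group of a closed surface, contradicting Proposition~\ref{prop_free}. Once $T$ is known to be free of local cut points, the connected open set $T\cap V$ is itself cyclicly connected, so one finds a Jordan curve $J\subset T\cap V$; its image pins down the sphere $S\subset C$, and both inclusions $f(V\cap T)\subseteq f(V)\cap S$ and $f(V\cap T)\supseteq f(V)\cap S$ then follow by transporting local Jordan curves back and forth. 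In short, your plan becomes correct once you insert the ``no local cut points'' step, and that step genuinely requires the fundamental-group input rather than only the absence of global cut points.
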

\begin{proof}
First we show that $T$ is free of local cut points: For the sake of contradiction we assume that $T$ contains a local cut point. Due to  Corollary \ref{cor_fund_cyc} the space $X$ is locally simply connected. From Proposition \ref{prop_HNN} follows that there is a group $G$ such that $\pi_1(T)$ is isomorphic to $G\ast \mathbb{Z}$. Moreover Corollary \ref{cor_fund_cyc} yields that $\pi_1(T)$ is isomorphic to the fundamental group of some closed surface. This contradicts Proposition \ref{prop_free}.\\
Let $p\in T$. Then Lemma \ref{lem_local} implies that there is a connected open neighborhood $V$ of $p$ in $X$ and a homeomorphism $f$ from $V$ to an open subset of some cactoid $C$. Further Lemma \ref{lem_topo} yields that $T\cap V$ is connected. Since $T$ is free of local cut points, there is a Jordan curve $J$ in $V\cap T$. In particular $f(J)$ is contained in some maximal cyclic subset $S$ of $C$.\\
The subset $f(V)\cap S$ is connected and $S$ is free of local cut points. Therefore we derive that $f\p{V\cap T}=f(V)\cap S$. Hence $V\cap T$ is homeomorphic to an open subset of the 2-sphere.\\
We conclude that $T$ is a surface. Especially $T$ is a closed surface because  $\pi_1(T)$ is isomorphic to the fundamental group of some closed surface.
\end{proof}
\noindent
Combining the last lemma and Corollary \ref{cor_fund_cyc} we get the following result:
\begin{cor}\label{cor_simply_limit}
Let $\seq{X}{n}$ be a sequence in $\mathcal{S}\p{c,\varepsilon}$, where $c>0$, and $X\in\mathcal{M}$ with $\conv{X_n}{X}$. Then one maximal cyclic subset $T$ of $X$ is a closed surface of connectivity number $c$ and all other maximal cyclic subsets are homeomorphic to the 2-sphere. Moreover $T$ is orientable if and only if $X_n$ is orientable for infnitely many $n\inN$.
\end{cor}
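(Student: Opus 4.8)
The plan is to derive the statement directly from the two immediately preceding results, using the classification of closed surfaces as the bridge between them. First I would invoke the last lemma, which tells us that \emph{every} maximal cyclic subset of $X$ is a closed surface. Separately, since $c>0$, part 3) of Corollary \ref{cor_fund_cyc} provides a closed surface $S$ of connectivity number $c$ together with a distinguished maximal cyclic subset $T$ of $X$ whose fundamental group is isomorphic to $\pi_1(S)$, while all remaining maximal cyclic subsets have trivial fundamental group. The task is then to upgrade these statements about fundamental groups to statements about homeomorphism type.

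For the distinguished subset $T$ I would argue as follows: by the last lemma $T$ is a closed surface, and its fundamental group is isomorphic to that of the closed surface $S$. Since a closed surface is determined up to homeomorphism by its fundamental group, it follows that $T$ is homeomorphic to $S$. As the connectivity number, being the first $\mathbb{Z}_2$-Betti number, is a homeomorphism invariant, $T$ has connectivity number $c$, as required.

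For every other maximal cyclic subset the same lemma again gives a closed surface, while Corollary \ref{cor_fund_cyc} gives that it is simply connected. Since the only simply connected closed surface is the $2$-sphere, each of these subsets is homeomorphic to the $2$-sphere. This establishes the first assertion. For the orientability statement I would simply combine the homeomorphism $T\cong S$ with the last clause of Corollary \ref{cor_fund_cyc}: that clause asserts that $S$ is orientable if and only if $X_n$ is orientable for infinitely many $n\inN$, and homeomorphic closed surfaces share the same orientability type.

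I expect this corollary to be essentially a transcription of the two cited results, so the proof is short; the only point deserving care is the appeal to the classification of surfaces, namely that an isomorphism between the fundamental groups of two closed surfaces forces a homeomorphism. This is where one must be slightly careful, because the connectivity number alone does not single out the surface: for even $c$ both an orientable and a non-orientable closed surface of connectivity number $c$ exist. It is precisely the fundamental group, rather than the connectivity number, that pins down the homeomorphism type of $T$ and simultaneously encodes its orientability, so that the final clause follows without any separate argument.
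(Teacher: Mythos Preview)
Your proposal is correct and follows exactly the approach the paper indicates: the corollary is stated as an immediate consequence of combining the preceding lemma (every maximal cyclic subset is a closed surface) with Corollary~\ref{cor_fund_cyc}. You have simply spelled out the implicit step---that the classification of closed surfaces lets one pass from an isomorphism of fundamental groups to a homeomorphism---which the paper leaves to the reader.
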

\subsection{The General Case}
Next we see what happens if we omit the additional topological control.\\
If a sequence in $\mathcal{S}(c)$ has no uniformly semi-locally simply connected subsequence, then there is a subsequence and a sequence of Jordan curves as in the following lemma:
\begin{lem}\label{lem_jordan}
Let $\seq{X}{n}$ be a sequence in $\mathcal{S}(c)$ and $X\in\mathcal{M}$ with $\conv{X_n}{X}$. Further let $\seq{J}{n}$ be a sequence such that $J_n$ is a non-contractible Jordan curve in $X_n$ for every $n\inN$ and $\conv{diam\p{J_n}}{0}$. Then one of the following cases applies:
\begin{itemize}
\item[1)] There are $c_1,c_2\inN$ with $c_1+c_2=c$, a convergent sequence of spaces in $\mathcal{S}\p{c_1}$ and a convergent sequence of spaces in $\mathcal{S}\p{c_2}$ such that $X$ is a metric wedge sum of their limits. If $X_n$ is non-orientable for infinitely many $n\inN$, then the surfaces of at least one of the sequences may be chosen to be non-orientable.  
\item[2)] There is a convergent sequence of spaces in $\mathcal{S}\p{c-2}$ such that $X$ is its limit or a metric 2-point identification of it.  
\item[3)] There is a sequence of spaces in $\mathcal{S}\p{c-1}$ converging to $X$.
\end{itemize}
If $X_n$ is orientable for infinitely many $n\inN$, then always one of the first two cases applies and the surfaces of the corresponding sequences may be chosen to be orientable.
\end{lem}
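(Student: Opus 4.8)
The plan is to collapse each small Jordan curve and then invoke the purely topological classification of Proposition \ref{prop_jord}. Set $Y_n\coloneqq\faktor{X_n}{J_n}$, the metric quotient identifying $J_n$ to a single point $q_n$. Since the projection $p_n\colon X_n\to Y_n$ is surjective and satisfies $\left|d_{Y_n}(p_n x,p_n y)-d_{X_n}(x,y)\right|\le diam(J_n)$, it is a $diam(J_n)$-isometry; as $diam(J_n)\to 0$, combining this with $\conv{X_n}{X}$ yields $\conv{Y_n}{X}$. Topologically $Y_n=\faktor{X_n}{J_n}$ is one of the three spaces of Proposition \ref{prop_jord}, so after passing to a subsequence I may assume that the same case occurs for all $n$, and in the first case that the pair $(c_1,c_2)$ is independent of $n$ (only finitely many pairs sum to $c$). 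The three cases are then treated separately, and in each the orientability statements are read off directly from Proposition \ref{prop_jord}; in particular, if $X_n$ is orientable for infinitely many $n$ then the third case (which forces $X_n$ non-orientable) cannot occur on that subsequence, giving the final assertion.

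In the first case $J_n$ separates $X_n$, and collapsing it caps off each of the two sides with the point $q_n$, so $Y_n$ is the metric wedge sum at $q_n$ of two closed length surfaces $A_n\in\mathcal{S}(c_1)$ and $B_n\in\mathcal{S}(c_2)$, each carrying the length metric it inherits as a summand (any path between two points of $A_n$ that leaves $A_n$ must pass through $q_n$ and can be shortcut, so the wedge metric agrees with the intrinsic metric of $A_n$). Being convergent, $\seq{Y}{n}$ has bounded diameter and is uniformly totally bounded, hence so are $\seq{A}{n}$ and $\seq{B}{n}$; by the precompactness criterion I extract subsequences with $\conv{A_n}{A}$, $\conv{B_n}{B}$ and converging wedge points. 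Convergence of metric wedge sums under converging basepoints then shows that $X$ is the metric wedge sum of $A$ and $B$, which is the first conclusion.

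The third case is immediate: there $Y_n$ is itself a closed length surface of connectivity number $c-1$ and $X_n$ is non-orientable, so $\seq{Y}{n}$ is a sequence in $\mathcal{S}(c-1)$ converging to $X$. The second case is the delicate one. Here $J_n$ is non-separating, and $Y_n$ is a topological $2$-point identification of a closed surface $Z_n$ of connectivity number $c-2$. I would metrically unfold $Y_n$ at $q_n$, separating the two local sheets into distinct points $a_n,b_n$ and equipping $Z_n$ with the length metric for which its metric $2$-point identification at $\cp{a_n,b_n}$ is isometric to $Y_n$; this realizes $Z_n\in\mathcal{S}(c-2)$. After checking that $\seq{Z}{n}$ again has bounded diameter and is uniformly totally bounded, I extract $\conv{Z_n}{Z}$ with $\conv{a_n}{a}$ and $\conv{b_n}{b}$. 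If $a\neq b$, the $2$-point identifications converge and $X$ is the metric $2$-point identification of $Z$; if $a=b$, the identification becomes trivial in the limit and $X=Z$. This yields the two alternatives of the second conclusion.

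The main obstacle is precisely the metric unfolding in the second case: one must produce the length surface $Z_n$ realizing $Y_n$ as a genuine metric $2$-point identification and control its metric well enough to guarantee precompactness—in particular a diameter bound, since unfolding can only increase distances—together with the correct limiting behaviour of the identified points $a_n,b_n$. The remaining ingredients, namely $\conv{Y_n}{X}$, the continuity of metric wedge sums and of $2$-point identifications under converging basepoints, and the extraction of convergent subsequences, are routine consequences of the convergence criteria of Section 2.
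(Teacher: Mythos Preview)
Your proposal follows exactly the paper's route: form the metric quotients $Y_n=X_n/J_n$, observe $\conv{Y_n}{X}$, invoke Proposition~\ref{prop_jord}, pass to a subsequence on which a single case (and in case~1 a single pair $(c_1,c_2)$, and a single orientability type) occurs, and then treat the three cases separately with the orientability clauses read off from Proposition~\ref{prop_jord}. The obstacle you isolate in case~2 is the only nontrivial point and is dispatched elementarily: for the midpoint $m$ of a $Z_n$-geodesic from $a_n$ to $b_n$ one has $\tfrac12\,d_{Z_n}(a_n,b_n)=\min\p{d_{Z_n}(m,a_n),d_{Z_n}(m,b_n)}=d_{Y_n}\p{[m],q_n}\le diam(Y_n)$, whence $diam(Z_n)\le 3\,diam(Y_n)$, and any $\varepsilon$-net in $Y_n$ lifts (adding $a_n,b_n$) to an $\varepsilon$-net of the same radius in $Z_n$, giving the required precompactness.
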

\begin{proof}
First we may assume that all surfaces of the sequence are orientable or all are non-orientable. Moreover we may assume that the Jordan curves all belong to the same class in the sense of Proposition \ref{prop_jord}. In particular we have that the corresponding sequence $\seq{Y}{n}$ of metric quotients is convergent with limit $X$. We just go through the cases of the proposition:\\
In the first case there are $c_1,c_2\inN$ with $c_1+c_2=c$ such that $Y_n$ is a metric wedge sum of a space in $\mathcal{S}\p{c_1}$ and a space in $\mathcal{S}\p{c_2}$. Especially at least one of the surfaces is non-orientable if and only if $X_n$ is non-orientable. Moreover we may assume that the sequence of the wedge points and the sequences of the surfaces considered as subsets of the wedge sums are convergent. We derive that $X$ is a metric wedge sum of the limits \comp{p. 412}{Why35}.\\
Now we consider the second case: Then there is a space $Z_n$ in $\mathcal{S}\p{c-2}$ such that $Y_n$ is a metric 2-point identification of $Z_n$. In particular $Z_n$ is orientable if $X_n$ is orientable. Further we may assume the sequence $\seq{Z}{n}$ to be convergent. It follows that $X$ is the limit or a metric 2-point identification of it.\\
If we look at the third case, then we  have that $Y_n$ is a space in $\mathcal{S}\p{c-1}$ and $X_n$ is non-orientable.
\end{proof}
\noindent
The final result of this section refines a statement of the \hyperlink{Main Theorem}{Main Theorem}: 
\begin{trm}\label{trm_first_Main} 
Let $\seq{X}{n}$ be a sequence in $\mathcal{S}(c)$ and $X\in\mathcal{M}$ with $\conv{X_n}{X}$. Then $X$ can be obtained by a successive application of $k$ metric 2-point identifications to some space $Y\in\mathcal{G}\p{c_0}$ where $c_0\le c-2k$. Moreover the following statements apply: 
\begin{itemize}
\item[1)] If $X_n$ is orientable for infinitely many $n\inN$, then the maximal cyclic subsets of $Y$ are orientable. 
\item[2)] If $X_n$ is non-orientable for infinitely many $n\inN$, the maximal cyclic subsets of $Y$ are orientable and $k=0$, then $c_0<c$. \end{itemize} 
\end{trm}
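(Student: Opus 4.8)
The plan is to prove Theorem \ref{trm_first_Main} by induction on the connectivity number $c$. The base case $c=0$ is essentially \hyperlink{Whyburn's theorem}{Whyburn's theorem} (Theorem \ref{trm_why}): a convergent sequence in $\mathcal{S}(0)$ has a cactoid as its limit, which is a geodesic generalized cactoid whose maximal cyclic subsets (spheres) all have connectivity number $0$, so $Y=X$, $k=0$ and $c_0=0$ works. For the inductive step, I would first dispose of the case where the sequence $\seq{X}{n}$ admits a uniformly semi-locally simply connected subsequence: passing to this subsequence, it lies in some $\mathcal{S}(c,\varepsilon)$, and the results of the previous subsection --- in particular Corollary \ref{cor_simply_limit} together with the lemma showing maximal cyclic subsets are closed surfaces --- tell us directly that $X$ is a geodesic generalized cactoid with exactly one maximal cyclic subset of connectivity number $c$ and the rest spheres. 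Here $k=0$ and $c_0=c$, satisfying $c_0\le c-2k$.

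**The main inductive step.**

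The substantive case is when no such subsequence exists. Then, as noted in the text preceding Lemma \ref{lem_jordan}, we may pass to a subsequence for which there is a sequence $\seq{J}{n}$ of non-contractible Jordan curves with $\conv{diam\p{J_n}}{0}$. I would then invoke Lemma \ref{lem_jordan} and treat its three cases. In each case the limit $X$ is expressed in terms of limits of sequences of lower connectivity number, to which the induction hypothesis applies. In case 1, $X$ is a metric wedge sum of limits of sequences in $\mathcal{S}(c_1)$ and $\mathcal{S}(c_2)$ with $c_1+c_2=c$; by induction each limit is obtained from a geodesic generalized cactoid by $k_1$ resp. $k_2$ metric $2$-point identifications, and the wedge sum of two geodesic generalized cactoids is again one, with the total budget $c_0\le(c_1-2k_1)+(c_2-2k_2)=c-2(k_1+k_2)$ respected. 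In case 3, $X$ is itself a limit of a sequence in $\mathcal{S}(c-1)$, and the induction hypothesis applies verbatim with $c-1<c$. Case 2 is the only place where the identification count $k$ grows: either $X$ is a limit in $\mathcal{S}(c-2)$ (handled directly by induction), or $X$ is a metric $2$-point identification of such a limit $X'$. In the latter subcase, induction gives $X'$ from a geodesic generalized cactoid via $k'$ identifications with $c_0\le(c-2)-2k'$, and appending the new identification yields $X$ via $k'+1$ identifications with $c_0\le(c-2)-2k'=c-2(k'+1)$, exactly as required.

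**Tracking the orientability refinements.**

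The two numbered refinements require threading the orientability bookkeeping through the induction. For statement 1, I would use the final sentence of Lemma \ref{lem_jordan}: if $X_n$ is orientable for infinitely many $n$, then only cases 1 and 2 occur and the relevant lower sequences may be chosen orientable, so the induction hypothesis delivers orientable maximal cyclic subsets at every stage. For statement 2, the delicate point is the hypothesis that all maximal cyclic subsets of $Y$ are orientable while infinitely many $X_n$ are non-orientable and $k=0$; here I would argue that if $c_0=c$ were to hold with $k=0$, then no identification ever occurred and the construction tree consists only of wedge sums of orientable surfaces summing to the full connectivity number $c$, which by the orientability clause in case 1 of Proposition \ref{prop_jord} (at least one factor non-orientable iff $S$ non-orientable) forces a non-orientable maximal cyclic subset to appear at the step where a non-orientable $X_n$ was split --- contradicting orientability of all maximal cyclic subsets of $Y$.

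**The anticipated obstacle.**

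The main obstacle I expect is the careful verification that the algebraic inequality $c_0\le c-2k$ is preserved across the wedge-sum amalgamation in case 1, where two separately-obtained cactoid decompositions must be fused into a single geodesic generalized cactoid without double-counting connectivity or identifications, and that the wedge operation genuinely keeps us inside $\mathcal{G}(c_0)$ (i.e. the glued space is still geodesic with only finitely many non-spherical maximal cyclic subsets). Equally delicate is confirming that "$X$ is the limit or a metric $2$-point identification" in case 2 does not secretly permit a third possibility that would break the induction, and that in the identification subcase the new identified pair of points does not coincide with structure already present. These are bookkeeping-heavy but, given Lemma \ref{lem_jordan} and the closure properties of $\mathcal{G}$ and metric $2$-point identifications established earlier, should be routine rather than conceptual.
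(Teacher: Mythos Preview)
Your proposal is correct and follows essentially the same approach as the paper's proof: induction on $c$, base case via Whyburn, the uniformly semi-locally simply connected subcase via Corollary \ref{cor_simply_limit}, and otherwise Lemma \ref{lem_jordan} combined with the induction hypothesis and the observation that wedge sums and 2-point identifications compose in the expected way. Your treatment of statement 2) is phrased as a global contradiction on the ``construction tree'' rather than as a direct step in the induction (using the non-orientable clause of case 1 in Lemma \ref{lem_jordan} to force $c_0^{(i)}<c_i$ on one wedge factor), but this amounts to the same argument unrolled.
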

\begin{proof}
The proof proceeds by induction over the connectivity number:\\
In the case $c=0$ the claim directly follows by Theorem \ref{trm_why}.\\
Now we consider the case $c>0$. Moreover we assume that the claim is true, if the connectivity number is less than $c$. Provided the sequence is uniformly semi-locally simply connected, the claim directly follows by Corollary \ref{cor_simply_limit}. Otherwise we may assume that there is a sequence $\seq{J}{n}$ such that $J_n$ is a non-contractible Jordan curve in $X_n$ and $\conv{diam\p{J_n}}{0}$. Hence one of the cases of Lemma \ref{lem_jordan} applies. We note that the surfaces of the sequences occurring there have a connectivity number less than $c$.\\
Finally an application of the induction hypothesis and the following observation yield the claim:
Let $Y_1$ and $Y_2$ be metric spaces. Further let $Z_i$ be a space that can be obtained by a successive application of $k_i$ metric 2-point identifications to $Y_i$. Then every metric wedge sum of $Z_1$ and $Z_2$ is a space that can be obtained by a successive application of $k_1+k_2$ metric 2-point identifications to a metric wedge sum of $Y_1$ and $Y_2$. Moreover every metric 2-point identification of $Z_1$ is a space that can be obtained by a successive application of $k_1+1$ metric 2-point identifications to $Y_1$.
\end{proof}
\noindent 
The property of being a local cactoid is stable under applications of metric wedge sums and metric 2-point identifications. Hence the induction above also yields the following result:
\begin{cor}\label{cor_loc_cact}
Let $\seq{X}{n}$ be a sequence in $\mathcal{S}(c)$ and $X\in\mathcal{M}$ with $\conv{X_n}{X}$. Then $X$ is a local cactoid.   
\end{cor}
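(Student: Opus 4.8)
The plan is to run the very same induction on the connectivity number $c$ that underlies Theorem \ref{trm_first_Main}, but to propagate the property of being a local cactoid through each step instead of the decomposition into metric $2$-point identifications and a geodesic generalized cactoid. The only genuinely new ingredient is the stability assertion preceding the statement: a metric wedge sum of local cactoids is again a local cactoid, and a metric $2$-point identification of a local cactoid is again a local cactoid. I would establish these two closure properties first, after which the induction becomes essentially bookkeeping.

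For the stability I would argue purely locally. Away from the wedge point (respectively the identification point) a sufficiently small neighborhood in the new space coincides with a neighborhood in one of the original spaces, hence is homeomorphic to an open subset of a cactoid by hypothesis. At a wedge point $w$, choosing neighborhoods $U$ and $V$ of the two preimages that are homeomorphic to open subsets of cactoids $C_U$ and $C_V$, the corresponding neighborhood in the wedge sum is $U$ and $V$ glued at $w$, which is an open subset of the wedge of $C_U$ and $C_V$ at the images of $w$. Since wedging two cactoids at a point keeps all maximal cyclic subsets homeomorphic to the $2$-sphere and preserves the Peano property, the wedge $C_U\vee C_V$ is again a cactoid, so the neighborhood is an open subset of a cactoid. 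The $2$-point identification case is identical: as the two identified points are distinct in the original space, a small neighborhood of the identification point is the disjoint wedge of two neighborhoods, each an open subset of a cactoid, and the same argument applies; the global handle created by the identification is invisible at this local scale.

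With stability in hand, the induction proceeds exactly as for Theorem \ref{trm_first_Main}. In the base case $c=0$, Theorem \ref{trm_why} already gives that $X$ is a cactoid, in particular a local cactoid. For $c>0$, if the sequence admits a uniformly semi-locally simply connected subsequence, then Lemma \ref{lem_local} (through Corollary \ref{cor_simply_limit}) yields directly that $X$ is a local cactoid. Otherwise Lemma \ref{lem_jordan} presents $X$ as a metric wedge sum of limits of sequences in $\mathcal{S}(c_1)$ and $\mathcal{S}(c_2)$, or as a limit of a sequence in $\mathcal{S}(c-2)$ together with possibly a metric $2$-point identification of it, or as a limit of a sequence in $\mathcal{S}(c-1)$. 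In every case the constituent limits arise from connectivity numbers strictly below $c$, so by the induction hypothesis they are local cactoids, and the stability property closes the step.

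I expect the main obstacle to be the stability property itself, and specifically the verification at the gluing points that the glued neighborhood really embeds as an open subset of a \emph{single} cactoid rather than merely into some more general Peano space. The point to pin down carefully is that the wedge of two cactoids is a cactoid, i.e.\ that no new non-spherical maximal cyclic subset is created by the wedge or by the identification, and that openness is preserved under the gluing. The remainder is a direct transcription of the argument for Theorem \ref{trm_first_Main}.
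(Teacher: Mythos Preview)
Your proposal is correct and follows essentially the same approach as the paper: the paper states, immediately before the corollary, that being a local cactoid is stable under metric wedge sums and metric $2$-point identifications, and then remarks that the induction of Theorem~\ref{trm_first_Main} therefore yields the claim. You have simply spelled out the stability argument and the induction in more detail than the paper does; in particular, your verification at the gluing points (modelling the neighborhood on an open subset of a wedge of two cactoids) is the right way to justify the one-line stability assertion the paper makes.
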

\noindent
Finally we derive Theorem \ref{trm_topo}:
\begin{proof}[Proof of Theorem \ref{trm_topo}]
From Corollary \ref{cor_dim} follows that $X$ is at most 2-dimensional. Further Corollary \ref{cor_simply} and Corollary \ref{cor_loc_cact} imply that $X$ is locally simply connected. Finally Proposition \ref{prop_HNN} and Proposition \ref{prop_fund} yield the desired representation of $\pi_1(X)$.    
\end{proof}
\section{Approximation of Generalized Cactoids}\label{sec_2}
The aim of this chapter is to prove that the second statement of the  \hyperlink{Main Theorem}{Main Theorem} implies the first. At the end we show Corollary \ref{cor_ANR}.\\
We start by showing that metric wedge sums of closed length surfaces can be approximated by closed length surfaces:
\begin{lem}\label{lem_wedge}
Let $S_1$ and $S_2$ be closed length surfaces and $X$ be a metric wedge sum of them. Then $X$ can be obtained as the limit of length spaces being homeomorphic to a connected sum of $S_1$ and $S_2$.
\end{lem}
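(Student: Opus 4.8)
The plan is to exhibit $X$ as a Gromov-Hausdorff limit by constructing, for each $n$, a compact length space $X_n$ homeomorphic to $S_1\#S_2$ together with an $\varepsilon_n$-isometry $f_n\colon X_n\to X$ satisfying $\conv{\varepsilon_n}{0}$, after which the characterisation of convergence by almost isometries \comp{p. 260}{BBI01} finishes the proof. Write $p$ for the wedge point and $p_i\in S_i$ for the point identified to it. For each $n$ I would select a topological closed disc $D^n_i\subset S_i$ with $p_i\in D^n_i$ whose boundary $\partial D^n_i$ is a Jordan curve of length less than $\nicefrac{1}{n}$ and whose diameter is less than $\nicefrac{1}{n}$. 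Removing the interiors of $D^n_1$ and $D^n_2$, identifying $\partial D^n_1$ with $\partial D^n_2$ along a homeomorphism, and endowing the quotient with the induced length metric produces a compact length space $X_n$; by the very definition of the connected sum it is homeomorphic to $S_1\#S_2$.

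To compare $X_n$ with $X$ I would use the map $f_n$ that sends a point of $S_i\setminus D^n_i$ to the same point of $S_i\subset X$ and sends the glued boundary circle to its $S_1$-representatives in $X$. No continuity is needed here, since $\varepsilon$-isometries are not required to be continuous. The net property is immediate: the image of $f_n$ omits only the two removed discs, and every point of $D^n_i$ lies within $diam\p{D^n_i}<\nicefrac{1}{n}$ of $p$, so $f_n\p{X_n}$ is a $\nicefrac{1}{n}$-net in $X$.

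The core of the argument is the distortion bound, i.e.\ that $\left|d_X\p{f_n\p{q},f_n\p{q'}}-d_{X_n}\p{q,q'}\right|$ tends to $0$ uniformly. For the estimate $d_X\le d_{X_n}+o\p{1}$ I would project an almost-geodesic of $X_n$ to $X$ and observe that each of its boundedly many passages through the glued circle can be replaced by a detour through $p$ at a cost at most $diam\p{D^n_1}+diam\p{D^n_2}$. For the reverse estimate I would take an almost-geodesic of $X$ and reroute it inside $X_n$: wherever it enters a removed disc $D^n_i$ or runs through $p$, I replace the corresponding arc by a path following the boundary Jordan curve $\partial D^n_i$ from the entry point to the exit point, whose length is at most the length of $\partial D^n_i$, hence less than $\nicefrac{1}{n}$. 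Both modifications alter lengths by at most a fixed multiple of $\nicefrac{1}{n}$, so the distortion is $o\p{1}$ and $f_n$ is an $\varepsilon_n$-isometry with $\conv{\varepsilon_n}{0}$.

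The step I expect to be the main obstacle is the construction of the discs $D^n_i$, that is, the existence of arbitrarily short separating Jordan curves around a point of a length surface; it is precisely the shortness of $\partial D^n_i$ that drives the reverse distortion estimate. The plan here is to let $U_k$ be a neighbourhood basis of $p_i$ consisting of open topological discs shrinking to $p_i$ and to take $\gamma^k_i$ to be an almost-minimiser of length among the loops in $U_k\setminus\cp{p_i}$ that are non-contractible there. Such a loop encircles $p_i$, hence bounds a disc containing it, and can be taken simple; its diameter is automatically controlled by $diam\p{U_k}$, while the point requiring real work is that its length tends to $0$ as $\conv{k}{\infty}$, since otherwise the metric completion would attach to $p_i$ a circle of positive length rather than a single point, contradicting that $p_i$ is a manifold point. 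Making this systolic estimate precise is the genuine content of the lemma; once it is in place, the discs $D^n_i$, the map $f_n$, and the convergence $\conv{X_n}{X}$ follow as above.
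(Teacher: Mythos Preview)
Your overall strategy---excise small discs, glue the boundaries, build an $\varepsilon_n$-isometry to $X$---is natural and close in spirit to the paper, but there is a genuine gap exactly where you flag the ``main obstacle'', and your heuristic does not close it.

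The claim you need is that around any point $p_i$ of a length surface one can find Jordan discs whose boundary has arbitrarily small \emph{length}. This is false in general. Take the flat cylinder $\interval[open left]{0}{1}\times S^1$ with metric $dr^2+L^2\,d\theta^2$ and adjoin a single point $0$ with $d\p{0,(r,\theta)}=r$. The resulting space is a compact length space homeomorphic to a closed disc, and $0$ is an honest manifold point (the map $(r,\theta)\mapsto r\,e^{i\theta}$ is a homeomorphism onto a Euclidean disc). Yet any Jordan curve separating $0$ from the outer boundary must wind once in $\theta$, hence has length at least $\int L\,|\theta'|\,dt\ge 2\pi L>0$. So your completion heuristic (``otherwise a circle of positive length would be attached at $p_i$'') fails: radial distances to $p_i$ shrink to zero while circumferential lengths stay bounded below, and $p_i$ is still a single manifold point.

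The paper sidesteps this completely. It first finds a small disc $D_{i,n}\ni p_i$ bounded by a piecewise geodesic Jordan curve $J_{i,n}$ (with \emph{no} control on its length), then uses the Kuratowski embedding $S_i\hookrightarrow l^\infty(X)$ to \emph{replace} $D_{i,n}$ by the cone $\tilde D_{i,n}$ of straight segments from $p_i$ to $\partial D_{i,n}$. The new space $S_{i,n}$, with its induced length metric, is still homeomorphic to $S_i$ and converges to $S_i$; but inside the cone the rescaled curves $\gamma_{i,\lambda}(t)=\lambda\,J_{i,n}\p{\nicefrac{t}{\lambda}}+(1-\lambda)p_i$ have length $\lambda\cdot length\p{J_{i,n}}\to 0$ by construction. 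One then picks $\alpha_k,\beta_k\to 0$ with $l_{1,\alpha_k}=l_{2,\beta_k}$, so the two boundary circles can be glued by an \emph{isometry} (your gluing along an arbitrary homeomorphism would need extra justification for the metric quotient to be a length space with the connected-sum topology), obtains $\tilde S_{n,k}\to W_n$ as $k\to\infty$, and finally $W_n\to X$. The Kuratowski cone trick is precisely the idea your proposal is missing.

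A secondary gap: your distortion estimate assumes an almost-geodesic in $X_n$ makes ``boundedly many passages'' through the glued circle, and likewise that a geodesic in $X$ enters the deleted discs boundedly often. Neither is justified; a priori the number of crossings can grow with $n$, and then the accumulated detour cost $(\text{crossings})\cdot O\p{\nicefrac{1}{n}}$ need not tend to $0$. The paper's two-stage limit ($k\to\infty$ then $n\to\infty$) avoids having to control this.
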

\begin{proof}
We denote the wedged points by $p_1$ and $p_2$. For every $i\in\cp{1,2}$ and $n\inN$ there is a topological closed disc $D_{i,n}$ of diameter less than $\nicefrac{1}{n}$ in $S_i$ that contains $p_i$ in its interior. Moreover we may assume $D_{i,n}$ to be bounded by a piecewise geodesic Jordan curve $J_{i,n}\colon\interval{0}{b_{i,n}}\to X$ \comptwo{p. 1794}{Shi99}{pp. 413-415}{Why35}.\\
Using the Kuratowski embedding we identify $S_i$ with a subset of $l^\infty(X)$. Further we define $\tilde{D}_{i,n}$ to be the union of all linear segments from $p_i$ to a point of $\partial D_{i,n}$. By direct calculation using the embedding we see that linear segments from $p_i$ to distinct points of $S_i$ only intersect in $p_i$. Therefore we get that $F_{i,n}\coloneqq \tilde{D}_{i,n}\cup\p{S_i\setminus D_{i,n}}$ is homeomorphic to $S_i$. Now we equip $F_{i,n}$ with its induced length metric and denote the obtained space by $S_{i,n}$. It follows that the identity map is a homeomorphism between $F_{i,n}$ and $S_{i,n}$. Moreover we have $\conv{S_{i,n}}{S_i}$.\\
For every $\lambda\in\interval[open left]{0}{1}$ the map $\gamma_{i,\lambda}(t)\coloneqq \lambda J_{i,n}\p{\frac{t}{\lambda}}+\p{1-\lambda} p_i$ is a piecewise geodesic Jordan curve in $S_{i,n}$. Moreover the curve bounds a topological closed disc $B_{i,\lambda}$ that contains $p_i$ in its interior. We have that $l_{i,\lambda}\coloneqq \conv{length\p{\gamma_{i,\lambda}}}{0}$, if $\conv{\lambda}{0}$, and there are sequences $\p{\alpha_k}_{k\inN}$ and $\p{\beta_k}_{k\inN}$ in $\interval[open left]{0}{1}$ converging to $0$ such that $l_{1,\alpha_k}=l_{2,\beta_k}$.\\ We assume the subsets in the upcoming construction to be equipped with their induced length metric. There is a natural equivalence relation $\sim$ on $\p{S_{1,n}\setminus \mathring{B}_{1,\alpha_k}}\sqcup\p{S_{2,n}\setminus \mathring{B}_{2,\beta_k}}$ defined by the following condition: $x\sim y$ if there is some $t\in\interval{0}{l_{1,\alpha_{k}}}$ such that $x=\gamma_{1,\alpha_k}(t)$ and $y=\gamma_{2,\beta_k}(t)$. We denote the corresponding metric quotient by $\tilde{S}_{n,k}$ and note that this space is homeomorphic to a connected sum of $S_1$ and $S_2$ \comp{p. 69}{BH99}.\\
Now $\p{\tilde{S}_{n,k}}_{k\inN}$ converges to a metric wedge sum $W_n$ of $S_{1,n}$ and $S_{2,n}$ along $p_1$ and $p_2$. Since $\conv{W_n}{X}$, we may close the proof. \end{proof}
\noindent
From Corollary \ref{cor_spher} and the last lemma we inductively get the following result:
\begin{cor}\label{cor_appr}
Let $X\in\mathcal{G}(c)$. Then there is a sequence of spaces in $\mathcal{S}(c)$ converging to $X$. Moreover the following statements apply:
\begin{itemize}
\item[1)] If all maximal cyclic subsets of $X$ are orientable, then the surfaces of the sequence may be chosen to be orientable.
\item[2)] If there is a non-orientable maximal cyclic subset in $X$, then the surfaces of the sequence may be chosen to be non-orientable.
\end{itemize}
\end{cor}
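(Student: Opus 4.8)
The plan is to feed the structural approximation of Corollary \ref{cor_spher} into the connected-sum approximation of Lemma \ref{lem_wedge}, keeping track of the connectivity number and orientability along the way, and then to extract a diagonal sequence. First I would apply Corollary \ref{cor_spher} to an enumeration $\p{T_k}^{\infty}_{k=1}$ of the maximal cyclic subsets of $X$. This yields a sequence $\seq{X}{n}$ of spaces in $\mathcal{W}_0$ with $\conv{X_n}{X}$, where the maximal cyclic subsets of $X_n$ are isometric to $T_1,\ldots,T_n$ together with finitely many spheres from $\mathcal{S}(0)$. Because $X\in\mathcal{G}(c)$, only finitely many $T_k$ are non-spherical and their connectivity numbers sum to $c$; hence for all sufficiently large $n$ the connectivity numbers of the maximal cyclic subsets of $X_n$ already sum to exactly $c$, the spheres contributing $0$. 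It therefore suffices to approximate each such $X_n$ by closed length surfaces of connectivity number $c$ and then to pass to a diagonal sequence.

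Next I would fix such an $X_n$. As an element of $\mathcal{W}_0$ it is a successive metric wedge sum of finitely many closed length surfaces, and since every wedge point is shared by exactly two of them, their incidence pattern is a finite tree (a point shared by two surfaces is a cut point of the whole space, so two surfaces cannot share two such points). I would then argue by induction on the number $m$ of constituent surfaces that $X_n$ is the Gromov-Hausdorff limit of closed length surfaces homeomorphic to the iterated connected sum of all $m$ of them. For $m=1$ there is nothing to prove. For the inductive step, choose a leaf surface $S$, attached to its unique neighbour $S'$ at a single wedge point $p$. The surgery in the proof of Lemma \ref{lem_wedge} is supported in arbitrarily small discs around the wedge point; choosing these discs small enough to avoid the remaining finitely many wedge points on $S'$, I can carry out the connected-sum construction inside $X_n$ without disturbing the rest of the tree. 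This produces a sequence converging to $X_n$ whose members are metric wedge sums of $m-1$ closed surfaces, with $S$ and $S'$ replaced by a single surface homeomorphic to $S\# S'$. The induction hypothesis together with a diagonal argument then reduces $m$ to $1$.

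It remains to track the two invariants. The connectivity number is additive under connected sum (consistent with the splitting $c_1+c_2=c$ in Proposition \ref{prop_jord}), so the resulting closed surface has connectivity number equal to the sum over the constituents of $X_n$, namely $c$; thus it lies in $\mathcal{S}(c)$, and the main assertion follows after the diagonal extraction. For the refinements: if every maximal cyclic subset of $X$ is orientable, then so is every constituent surface of $X_n$ (the spheres included), and iterated connected sums of orientable surfaces are orientable, giving statement 1); if some maximal cyclic subset of $X$ is non-orientable, then for all large $n$ one constituent of $X_n$ is non-orientable, and a connected sum involving a non-orientable summand is non-orientable, giving statement 2).

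The hard part will be the second step: verifying rigorously that the local surgery of Lemma \ref{lem_wedge} can be performed at one wedge point of a larger wedge sum without interfering with the other gluings, and that the individual approximation errors combine correctly through the induction on $m$ and the final diagonal extraction (using stability of metric wedge sums under convergence). By contrast, the additivity of the connectivity number and the orientability bookkeeping are routine once this locality is established.
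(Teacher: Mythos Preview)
Your proposal is correct and follows exactly the route the paper indicates: reduce to $\mathcal{W}_0$ via Corollary \ref{cor_spher}, then apply Lemma \ref{lem_wedge} inductively along the finite wedge tree, and extract a diagonal sequence while keeping track of connectivity number and orientability. The locality concern you flag is real but is handled precisely as you suggest, by taking the surgery discs small enough to miss the remaining wedge points; the paper leaves this implicit in the word ``inductively''.
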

\noindent
A similar argument to that used in the proof of Lemma \ref{lem_wedge} shows the following result concerning metric 2-point identifications:
\begin{lem}\label{lem_appr2}
Let $S\in\mathcal{S}(c)$ and $X$ be a metric 2-point identification of $S$. Then there is a sequence of spaces in $\mathcal{S}\p{c+2}$ converging to $X$. Moreover the following statements apply:
\begin{itemize}
\item[1)] The surfaces of the sequence may be chosen to be non-orientable.
\item[2)] If $S$ is orientable, then the surfaces of the sequence may be chosen to be orientable.
\end{itemize}
\end{lem}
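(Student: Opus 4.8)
The plan is to mimic the proof of Lemma \ref{lem_wedge}, with the two surfaces $S_1,S_2$ replaced by two disjoint disc-neighborhoods of the identified points inside the single surface $S$. Write $p_1,p_2$ for the two (distinct) points of $S$ identified in $X$, so that $X$ is the metric quotient of $S$ collapsing $p_1$ and $p_2$ to one point. First I would fix, for every $n\inN$, two topological closed discs $D_{1,n}\ni p_1$ and $D_{2,n}\ni p_2$ of diameter less than $\nicefrac{1}{n}$, each bounded by a piecewise geodesic Jordan curve, exactly as in Lemma \ref{lem_wedge}. Using the Kuratowski embedding of $S$ into $l^\infty(S)$ I would cone off both discs, replacing $D_{i,n}$ by the union $\tilde{D}_{i,n}$ of the linear segments from $p_i$ to $\partial D_{i,n}$, and equip the set $F_n\coloneqq \tilde{D}_{1,n}\cup\tilde{D}_{2,n}\cup\p{S\setminus\p{D_{1,n}\cup D_{2,n}}}$ with its induced length metric to obtain a surface $S_n$ homeomorphic to $S$ with $\conv{S_n}{S}$. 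The one genuinely new point is that both cones now lie in the \emph{same} surface; but $\tilde{D}_{i,n}$ is contained in the closed $\nicefrac{1}{n}$-ball around $p_i$, while $\|p_1-p_2\|_\infty=d_S\p{p_1,p_2}$ is a fixed positive number, so the two cones are disjoint for all large $n$ and the construction goes through unchanged.

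Next I would carry over the shrinking mechanism verbatim: for $\lambda\in\interval[open left]{0}{1}$ the scaled curves $\gamma_{i,\lambda}$ are piecewise geodesic Jordan curves bounding discs $B_{i,\lambda}\ni p_i$ whose lengths $l_{i,\lambda}$ tend to $0$ as $\conv{\lambda}{0}$, and I would choose sequences $\p{\alpha_k}_{k\inN}$ and $\p{\beta_k}_{k\inN}$ in $\interval[open left]{0}{1}$ converging to $0$ with $l_{1,\alpha_k}=l_{2,\beta_k}$. The decisive departure from Lemma \ref{lem_wedge} is the gluing: instead of joining two distinct surfaces, I would glue the two boundary curves of the \emph{single} space $S_n\setminus\p{\mathring{B}_{1,\alpha_k}\cup\mathring{B}_{2,\beta_k}}$ to each other via the arclength identification $\gamma_{1,\alpha_k}(t)\sim\gamma_{2,\beta_k}(t)$, forming the metric quotient $\tilde{S}_{n,k}$.

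I would then identify $\tilde{S}_{n,k}$ topologically. Gluing two boundary circles of a surface-with-boundary along a homeomorphism yields a closed surface \comp{p. 69}{BH99}, and an Euler characteristic count — removing the two discs lowers $\chi$ by two while the self-gluing along a circle leaves $\chi$ unchanged — gives $\chi\p{\tilde{S}_{n,k}}=\chi(S)-2$, so $\tilde{S}_{n,k}$ has connectivity number $c+2$; this is precisely the reverse of case 2 of Proposition \ref{prop_jord}. Orientability is governed by the relative orientation of the identification of the two boundary circles: choosing it to create a cross-handle yields a non-orientable surface in every case, giving statement 1, whereas if $S$ is orientable the opposite choice creates an ordinary handle and keeps the surface orientable, giving statement 2. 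In either case $\tilde{S}_{n,k}\in\mathcal{S}\p{c+2}$ with the prescribed orientability.

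Finally I would let $k\to\infty$: as the discs $B_{1,\alpha_k}$ and $B_{2,\beta_k}$ shrink to $p_1$ and $p_2$, the neck is pinched and the spaces $\tilde{S}_{n,k}$ converge to the metric $2$-point identification $W_n$ of $S_n$ at $p_1$ and $p_2$; since $\conv{S_n}{S}$, the identifications satisfy $\conv{W_n}{X}$ by the same continuity argument as in Lemma \ref{lem_wedge}, and a diagonal choice $k=k(n)$ then produces a sequence in $\mathcal{S}\p{c+2}$ converging to $X$. I expect the main obstacle to be the topological bookkeeping of the self-gluing — verifying that the quotient is a genuine closed surface of connectivity number $c+2$ rather than a pinched space, and pinning down the relative boundary orientation that realizes each of the two orientability conclusions — together with the minor disjointness argument for the two cones inside the single surface $S$; the remaining metric estimates are identical to those already carried out for Lemma \ref{lem_wedge}.
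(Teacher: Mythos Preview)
Your proposal is correct and is precisely what the paper intends: the paper does not spell out a separate proof but merely states that ``a similar argument to that used in the proof of Lemma \ref{lem_wedge}'' applies, and your adaptation---replacing the two surfaces by two disjoint coned disc-neighborhoods of $p_1,p_2$ inside the single surface $S$ and then self-gluing the shrinking boundary circles---is exactly that similar argument, including the handle/cross-handle dichotomy that yields the two orientability assertions.
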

\noindent
Now we combine the last two results:
\begin{lem}\label{lem_apprG}
Let $Y$ be a space that can be obtained by a successive application of $k>0$ metric 2-point identifications to a space $X\in\mathcal{G}(c)$. Then there is a sequence of spaces in $\mathcal{S}\p{c+2k}$ converging to $Y$. Moreover the following statements apply:
\begin{itemize}
\item[1)] The surfaces of the sequence may be chosen to be non-orientable.
\item[2)] If all maximal cyclic subsets of $X$ are orientable, then the surfaces of the sequence may be chosen to be orientable.
\end{itemize}
\end{lem}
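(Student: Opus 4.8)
The plan is to prove the statement by induction on $k$, peeling off one metric 2-point identification at each step and converting it into a single application of Lemma \ref{lem_appr2}, while Corollary \ref{cor_appr} initializes the approximation of the underlying generalized cactoid $X$. The point is that Corollary \ref{cor_appr} already approximates $X\in\mathcal{G}(c)$ by closed length surfaces, and Lemma \ref{lem_appr2} already trades one 2-point identification of a surface for an increase of the connectivity number by $2$; the work consists in threading the prescribed identification points through the approximating sequences.

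The technical core is the following continuity property of metric 2-point identifications, which I would record first. If $\seq{W}{n}$ is a sequence in $\mathcal{M}$ with $\conv{W_n}{W}$ for some $W\in\mathcal{M}$, and we are given points $a_n,b_n\in W_n$ and $a,b\in W$ with $\conv{a_n}{a}$ and $\conv{b_n}{b}$ in a common compact realization, then $\conv{\faktor{W_n}{(a_n\sim b_n)}}{\faktor{W}{(a\sim b)}}$. This follows from the $\varepsilon$-isometry characterization of Gromov-Hausdorff convergence: given $\varepsilon_n$-isometries $f_n\colon W_n\to W$ with $\conv{d(f_n(a_n),a)}{0}$ and $\conv{d(f_n(b_n),b)}{0}$, the quotient distance being the minimum of the direct distance and the two distances routed through the identified point shows that the maps induced by $f_n$ on the quotients are $\varepsilon_n'$-isometries with $\conv{\varepsilon_n'}{0}$. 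Since the $W_n$ are $\nicefrac{1}{n}$-nets in $W$, we may always select marked points $a_n,b_n$ converging to prescribed $a,b\in W$, and for large $n$ these are distinct, so the quotient is a genuine metric 2-point identification.

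With this in hand the induction runs as follows. For $k=1$ we have $Y=\faktor{X}{(a\sim b)}$ with $X\in\mathcal{G}(c)$. Corollary \ref{cor_appr} provides a sequence $\seq{S}{n}$ in $\mathcal{S}(c)$ with $\conv{S_n}{X}$; choosing $a_n,b_n\in S_n$ approaching $a,b$, the continuity property gives $\conv{\faktor{S_n}{(a_n\sim b_n)}}{Y}$. Each space $\faktor{S_n}{(a_n\sim b_n)}$ is a metric 2-point identification of a surface in $\mathcal{S}(c)$, so Lemma \ref{lem_appr2} exhibits it as the limit of surfaces in $\mathcal{S}\p{c+2}$, and a diagonal argument as in the proof of Lemma \ref{lem_finite} realizes $Y$ itself as the limit of spaces in $\mathcal{S}\p{c+2}$. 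For the step from $k$ to $k+1$ I would write $Y=\faktor{Y'}{(a\sim b)}$, where $Y'$ arises from $X$ by $k$ identifications; the induction hypothesis approximates $Y'$ by surfaces in $\mathcal{S}\p{c+2k}$, and repeating the marked-point, Lemma \ref{lem_appr2}, and diagonalization argument lifts this to an approximation of $Y$ by surfaces in $\mathcal{S}\p{c+2k+2}$.

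The orientability claims are tracked alongside the induction. For statement 1 one invokes case 1 of Lemma \ref{lem_appr2} at every identification step, which permits non-orientable surfaces regardless of the orientability of the space being identified. For statement 2 one starts from orientable surfaces provided by case 1 of Corollary \ref{cor_appr} (using that all maximal cyclic subsets of $X$ are orientable) and propagates orientability through each step via case 2 of Lemma \ref{lem_appr2}. I expect the main obstacle to be the verification of the marked-point continuity property together with the bookkeeping of the iterated diagonal extraction: every inductive step produces a double sequence whose diagonal must be shown to converge to the correct limit. Both difficulties are resolved by the $\varepsilon$-isometry estimates above and a standard diagonal argument.
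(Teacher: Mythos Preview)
Your proposal is correct and follows essentially the same route as the paper: induction on $k$, with Corollary \ref{cor_appr} supplying the approximation of $X$, one identification peeled off at each step, marked-point continuity of metric 2-point identifications giving $\conv{\faktor{Z_n}{(z_{1,n}\sim z_{2,n})}}{Y}$, Lemma \ref{lem_appr2} upgrading to surfaces in $\mathcal{S}\p{c+2k}$, and a diagonal extraction. The only cosmetic differences are that the paper starts its induction formally at $k=0$ and merely asserts the marked-point continuity you spell out, while you begin at $k=1$ and justify that continuity via the $\varepsilon$-isometry description.
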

\begin{proof}
The proof proceeds by induction over $k$:\\
In the case $k=0$ the claim directly follows by Corollary \ref{cor_appr}.\\
Now we consider the case $k>0$. Furthermore we assume that the claim is true for every $k_0\inN$ with $k_0<k$. There is a space $Z$ that can be obtained by a successive application of $k-1$ metric 2-point identifications to $X$ such that $Y$ is a metric 2-point identification of $Z$. By the induction hypothesis there is a sequence $\seq{Z}{n}$ in $\mathcal{S}\p{c+2\p{k-1}}$ with $\conv{Z_n}{Z}$ as in the claim.\\
Let $z_1,z_2\in Z$ be the points that are glued to construct $Y$. There is a sequence $\p{z_{i,n}}_{n\inN}$ with $z_{i,n}\in Z_n$ converging to $z_i$. In particular we may assume that $z_{1,n}$ and  $z_{2,n}$ are different. If $Y_n$ denotes a metric 2-point identification of $Z_n$ along $z_{1,n}$ and $z_{2,n}$, then we have $\conv{Y_n}{Y}$.\\ Choosing a diagonal sequence the claim follows by Lemma \ref{lem_appr2}.
\end{proof}
\noindent
Using a metric wedge sum with a vanishing sequence of length spaces all being homeomorphic to the torus or all being homeomorphic to the real projective plane we derive a further corollary of Lemma \ref{lem_wedge}:
\begin{cor}\label{cor_apprS}
Let $S\in\mathcal{S}(c)$. Then the following statements apply:
\begin{itemize}
\item[1)] $S$ can be obtained as the limit of non-orientable closed length surfaces whose connectivity number is equal to $c+1$.
\item[2)] If $S$ is orientable, then $S$ can be obtained as the limit of orientable closed length surfaces whose connectivity number is equal to $c+2$.
\end{itemize}
\end{cor}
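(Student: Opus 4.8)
The plan is to approximate $S$ by metric wedge sums which are themselves approximated by the desired connected sums, and then to extract a diagonal sequence. For the first statement, fix some length metric on the real projective plane and, for every $n\inN$, let $P_n$ be this surface rescaled to diameter $\nicefrac{1}{n}$. I would form a metric wedge sum $W_n$ of $S$ and $P_n$ at an arbitrary pair of points. Since a shortest path between two points of $S$ never has reason to enter the wedged copy $P_n$, the inclusion of $S$ into $W_n$ is distance preserving; and because every point of $P_n$ lies within $\nicefrac{1}{n}$ of the wedge point, this inclusion is moreover a $\nicefrac{1}{n}$-net, hence a $\nicefrac{1}{n}$-isometry. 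It follows that $\conv{W_n}{S}$.

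Next I would recall that the connectivity number is additive under connected sum — this follows from the additivity of the Euler characteristic under connected sum together with the relation $\chi=2-c$ for closed surfaces — so that a connected sum of $S$ and the real projective plane is a non-orientable closed surface of connectivity number $c+1$. By Lemma \ref{lem_wedge}, each $W_n$ is the limit of a sequence of length spaces being homeomorphic to such a connected sum. Combining this with $\conv{W_n}{S}$ and passing to a diagonal sequence produces the required sequence of non-orientable closed length surfaces of connectivity number $c+1$ converging to $S$.

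For the second statement I would argue in exactly the same manner, replacing the real projective plane by the torus and rescaling it to diameter $\nicefrac{1}{n}$. When $S$ is orientable, a connected sum of $S$ and the torus is again orientable and has connectivity number $c+2$; the wedge sums with a vanishing torus again converge to $S$, and Lemma \ref{lem_wedge} together with a diagonal argument closes the proof. I do not expect a genuine obstacle here, since the substantive approximation has already been carried out in Lemma \ref{lem_wedge}; the only points requiring verification are the additivity of the connectivity number under connected sum and the convergence $\conv{W_n}{S}$, both of which are elementary.
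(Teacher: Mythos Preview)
Your proposal is correct and follows essentially the same approach as the paper: the paper simply remarks that the corollary follows from Lemma \ref{lem_wedge} by taking a metric wedge sum of $S$ with a vanishing sequence of length spaces all homeomorphic to the real projective plane (for the first statement) or to the torus (for the second), and you have accurately supplied the details of that argument, including the verification that the wedge sums converge to $S$ and the diagonal extraction.
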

\noindent
Now the last three results provide all tools to prove the final result of this section which refines the remaining direction of the \hyperlink{Main Theorem}{Main Theorem}: 
\begin{trm}\label{trm_sec_Main}
Let $c\inN_0$ and $Y$ be space that can be obtained by a successive application of $k$ metric 2-point identifications to a space $X\in\mathcal{G}\p{c_0}$ where $c_0\le c-2k$. Then there is a sequence in $\mathcal{S}(c)$ converging to $Y$. Moreover the following statements apply:
\begin{itemize}
\item[1)] If all maximal cyclic subsets of $X$ are orientable, then the surfaces of the sequence may be chosen to be orientable.
\item[2)] If there is a non-orientable maximal cyclic subset in $X$, $k>0$ or $c_0<c$, then we may assume the surfaces of the sequence to be non-orientable.  
\end{itemize}
\end{trm}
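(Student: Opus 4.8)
The plan is to climb from the base class $\mathcal{S}\p{c_0+2k}$ up to $\mathcal{S}(c)$ using the three approximation results already at our disposal. Write $d\coloneqq c-\p{c_0+2k}$, which is nonnegative since $c_0\le c-2k$. First I would produce a sequence converging to $Y$ whose surfaces have connectivity number $c_0+2k$: if $k=0$ then $Y=X\in\mathcal{G}\p{c_0}$ and Corollary \ref{cor_appr} applies directly, while if $k>0$ the existence of such a sequence is exactly Lemma \ref{lem_apprG}. Both results carry orientability refinements, so at this base level I can already decide, according to the hypotheses of the two numbered statements, whether the approximating surfaces are orientable or non-orientable.

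Second, I would raise the connectivity number from $c_0+2k$ to $c$ by iterating Corollary \ref{cor_apprS}. For a single surface $W$ in the base sequence, that corollary furnishes a sequence of surfaces of larger connectivity number converging to $W$; applying this to each member of the base sequence and passing to a diagonal subsequence yields a sequence converging to $Y$ whose connectivity number has been increased. Part 1) of Corollary \ref{cor_apprS} raises the connectivity number by one and renders the surfaces non-orientable, whereas part 2) raises it by two and preserves orientability. The unconditional claim then follows by applying part 1) a total of $d$ times when $d>0$, and is immediate when $d=0$.

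Third, I would handle the two refinements. For statement 1) all maximal cyclic subsets of $X$ are orientable, so the base sequence may be taken orientable and $c_0+2k$ is even; then $d/2$ applications of part 2) of Corollary \ref{cor_apprS} reach $\mathcal{S}(c)$ through orientable surfaces. For statement 2) I would first secure non-orientable surfaces at the base level—this is guaranteed when $k>0$ by Lemma \ref{lem_apprG} and when a non-orientable maximal cyclic subset is present by Corollary \ref{cor_appr}—and then raise the connectivity number by one repeatedly via part 1). In the remaining subcase $k=0$ with all subsets orientable but $c_0<c$, the base sequence is orientable, so I would spend the first of the $d\ge 1$ available steps on part 1) to simultaneously raise the connectivity number by one and pass to non-orientable surfaces, and use part 1) again for the remaining $c-c_0-1$ steps.

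The main obstacle is the orientability-and-parity bookkeeping rather than any new geometric input: I expect to verify that the three alternatives in statement 2) each leave enough room in the gap $d$ to accommodate the step that introduces non-orientability, and that the evenness of $c_0+2k$ forced by orientable maximal cyclic subsets in statement 1) is compatible with raising the connectivity number only in increments of two. Once these cases are organized, the diagonal arguments are routine.
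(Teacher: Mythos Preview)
Your proposal is correct and is exactly the argument the paper has in mind: the paper itself gives no detailed proof but only remarks that ``the last three results provide all tools,'' meaning Corollary~\ref{cor_appr} (or Lemma~\ref{lem_apprG} when $k>0$) to reach $\mathcal{S}\p{c_0+2k}$, followed by iterated use of Corollary~\ref{cor_apprS} and a diagonal argument to climb to $\mathcal{S}(c)$, with the orientability bookkeeping handled just as you describe. The one point you flag but do not fully close is the parity check in statement~1): since orientable closed surfaces have even connectivity number, the conclusion forces $c$ itself to be even, whence $d=c-(c_0+2k)$ is even and your $d/2$ applications of part~2) of Corollary~\ref{cor_apprS} are well defined.
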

\noindent
We note that the last theorem and Theorem \ref{trm_first_Main} completely describe the Gromov-Hausdorff closure of the class of length spaces being homeomorphic to a fixed closed surface.\\
Finally we are able to prove Corollary \ref{cor_ANR}:
\noindent  
\begin{proof}[Proof of Corollary \ref{cor_ANR}.] 
Let $Y$ be a space that can be obtained by a successive application of metric 2-point identifications to a generalized cactoid. By Theorem \ref{trm_topo} and the \hyperlink{Main Theorem}{Main Theorem} all such spaces are locally simply connected. Hence Proposition \ref{prop_HNN} and Proposition \ref{prop_fund} imply that $Y$ is simply connected if and only if $Y$ is a cactoid.\\
From Theorem \ref{trm_sec_Main} follows that every geodesic cactoid can be obtained as the limit of length spaces being homeomorphic to $S$. Now the claim follows by the \hyperlink{Main Theorem}{Main Theorem} and Corollary \ref{cor_gen_ANR}. 
\end{proof}
\section*{Acknowledgments}
The author thanks his PhD advisor Alexander Lytchak for great support.
\bibliography{sources}{}

\begin{thebibliography}{10}

\bibitem{AKP19}
S.~Alexander, V.~Kapovitch, and A.~Petrunin.
\newblock {\em An invitation to {Alexandrov} geometry. {CAT}(0) spaces}.
\newblock SpringerBriefs Math. Cham: Springer, 2019.

\bibitem{Bab95}
L.~Babai.
\newblock Automorphism groups, isomorphism, reconstruction.
\newblock In {\em Handbook of combinatorics. {Volume} 2}, pages 1447--1540. Amsterdam: Elsevier (North-Holland); Cambridge, MA: MIT Press, 1995.

\bibitem{Bin49}
R.~H. Bing.
\newblock Partitioning a set.
\newblock {\em Bull. Amer. Math. Soc.}, 55:1101--1110, 1949.

\bibitem{BH99}
M.~R. Bridson and A.~Haefliger.
\newblock {\em Metric spaces of non-positive curvature}, volume 319 of {\em Grundlehren Math. Wiss.}
\newblock Berlin: Springer, 1999.

\bibitem{BBI01}
D.~Burago, Y.~Burago, and S.~Ivanov.
\newblock {\em A course in metric geometry}, volume~33 of {\em Grad. Stud. Math.}
\newblock Providence, RI: American Mathematical Society, 2001.

\bibitem{DK18}
C.~Dru{\c{t}}u and M.~Kapovich.
\newblock {\em Geometric group theory}, volume~63 of {\em Amer. Math. Soc. Colloq. Publ.}
\newblock Providence, RI: American Mathematical Society, 2018.

\bibitem{Eps66}
D.~B.~A. Epstein.
\newblock Curves on 2-manifolds and isotopies.
\newblock {\em Acta Math.}, 115:83--107, 1966.

\bibitem{FO95}
S.~C. Ferry and B.~L. Okun.
\newblock Approximating topological metrics by {Riemannian} metrics.
\newblock {\em Proc. Amer. Math. Soc.}, 123(6):1865--1872, 1995.

\bibitem{Fri22}
S.~Friedl.
\newblock Topology.
\newblock Lecture notes, University of Regensburg, 2016-2022. \url{https://friedl.app.uni-regensburg.de/papers/1at-total-public-october-14-2022.pdf}.
\newblock Accessed: 09-06-2023.

\bibitem{Gri54}
H.~B. Griffiths.
\newblock The fundamental group of two spaces with a common point.
\newblock {\em Q. J. Math.}, 5:175--190, 1954.

\bibitem{Gro07}
M.~Gromov.
\newblock {\em Metric structures for {Riemannian} and non-{Riemannian} spaces}.
\newblock Mod. Birkh{\"a}user Class. Boston, MA: Birkh{\"a}user, 2007.

\bibitem{Kaw94}
K.~Kawamura.
\newblock A characterization of {{\(LC^ n\)}} compacta in terms of {Gromov}-{Hausdorff} convergence.
\newblock {\em Canad. Math. Bull.}, 37(4):505--513, 1994.

\bibitem{Loe17}
C.~L{\"o}h.
\newblock {\em Geometric group theory. {An} introduction}.
\newblock Universitext. Cham: Springer, 2017.

\bibitem{LW18}
A.~Lytchak and S.~Wenger.
\newblock Intrinsic structure of minimal discs in metric spaces.
\newblock {\em Geom. Topol.}, 22(1):591--644, 2018.

\bibitem{MST16}
J.~Matou{\v{s}}ek, E.~Sedgwick, M.~Tancer, and U.~Wagner.
\newblock Untangling two systems of noncrossing curves.
\newblock {\em Israel J. Math.}, 212(1):37--79, 2016.

\bibitem{NR23}
D.~Ntalampekos and M.~Romney.
\newblock Polyhedral approximation of metric surfaces and applications to uniformization.
\newblock {\em Duke Math. J.}, 172(9):1673--1734, 2023.

\bibitem{Pet93}
P.~V. Petersen.
\newblock Gromov-{Hausdorff} convergence of metric spaces.
\newblock In {\em Differential geometry: {Riemannian} geometry}, volume 54 part 3 of {\em Proc. Sympos. Pure Math.}, pages 489--504. Providence, RI: American Mathematical Society, 1993.

\bibitem{Pet16}
P.~V. Petersen.
\newblock {\em Riemannian geometry}, volume 171 of {\em Grad. Texts in Math.}
\newblock Cham: Springer, 3rd edition, 2016.

\bibitem{RS38}
J.~H. Roberts and N.~E. Steenrod.
\newblock Monotone transformations of two-dimensional manifolds.
\newblock {\em Ann. of Math. (2)}, 39:851--862, 1938.

\bibitem{Sak13}
K.~Sakai.
\newblock {\em Geometric aspects of general topology}.
\newblock Springer Monogr. Math. Tokyo: Springer, 2013.

\bibitem{Shi99}
T.~Shioya.
\newblock The limit spaces of two-dimensional manifolds with uniformly bounded integral curvature.
\newblock {\em Trans. Amer. Math. Soc.}, 351(5):1765--1801, 1999.

\bibitem{SW01}
C.~Sormani and G.~Wei.
\newblock Hausdorff convergence and universal covers.
\newblock {\em Trans. Amer. Math. Soc.}, 353(9):3585--3602, 2001.

\bibitem{Why35}
G.~T. Whyburn.
\newblock On sequences and limiting sets.
\newblock {\em Fund. Math.}, 25:408--426, 1935.

\bibitem{Why42}
G.~T. Whyburn.
\newblock {\em Analytic topology}, volume~28 of {\em Amer. Math. Soc. Colloq. Publ.}
\newblock American Mathematical Society, Providence, RI, 1942.

\bibitem{You49}
G.~S.~j. Young.
\newblock On 1-regular convergence of sequences of 2-manifolds.
\newblock {\em Amer. J. Math.}, 71:339--348, 1949.

\end{thebibliography}
\bibliographystyle{abbrv}
\ \\
\footnotesize{\textsc{Institute of Algebra and Geometry, Karlsruhe Institute of Technology, Germany.}\\
\emph{E-mail}: \url{tobias.dott@kit.edu}} 
\end{document}